\newtheorem{theorem}{Theorem}[section]
\newtheorem{lemma}[theorem]{Lemma}
\newtheorem{corollary}[theorem]{Corollary}
\newtheorem{proposition}[theorem]{Proposition}
\theoremstyle{definition}
\newtheorem{remark}[theorem]{Remark}
\newtheorem{definition}[theorem]{Definition}
\newtheorem{example}[theorem]{Example}
\newtheorem{remark/example}[theorem]{Remark/Example}
\newtheorem{question}[theorem]{Question}
\let\oldlabel=\label
\def\prellabel{\marginparsep=1em\marginparwidth=44pt
 \def\label##1{\oldlabel{##1}\ifmmode\else\ifinner\else
 \marginpar{{\footnotesize\ \\ \tt
 ##1}}\fi\fi}}
\numberwithin{equation}{section}
\def\PP{ {\bf P} }
\def\NN{ {\bf N} }
\def\ZZ{ {\bf Z} }
\def\QQ{ {\bf Q} }
\def\CC{ {\bf C} }
\def\KK{ {\bf K} }
\def\LL{\mathcal L}
\def\F{\mathcal F}
\def\FF{\mathbf  F}
\def\MM{\mathbf  M}
\def\GG{\mathbf  G}
\newcommand{\ini}{\operatorname{in}}
\newcommand{\mm}{\operatorname{{\mathbf m}}}
\newcommand{\depth}{\operatorname{depth}}
\newcommand{\height}{\operatorname{height}}
\newcommand{\Tor}{\operatorname{Tor}}
\newcommand{\Ker}{\operatorname{Ker}}
\newcommand{\Image}{\operatorname{Image}}
\newcommand{\projdim}{\operatorname{pd}}
\newcommand{\reg}{\operatorname{reg}}
\newcommand{\gr}{\operatorname{gr}}
\newcommand{\Sym}{\operatorname{Sym}}
\newcommand{\Ann}{\operatorname{Ann}}
\newcommand{\HF}{\operatorname{HF}}
\newcommand{\Rate}{\operatorname{Rate}}
\newcommand{\slope}{\operatorname{slope}}
\newcommand{\lin}{\operatorname{lin}}
\newcommand{\ld}{\operatorname{ld}}
\numberwithin{equation}{section}
\begin{document}
\title{
Koszul algebras and regularity}
 \author{Aldo Conca, Emanuela De Negri, Maria Evelina Rossi}
\address{ Dipartimento di Matematica,
Universit\`a degli Studi di Genova, Italy} \email{conca@dima.unige.it, denegri@dima.unige.it, rossim@dima.unige.it}
 
\subjclass[2000]{}
\keywords{}
\date{}
\maketitle

\section{Introduction}
This is a survey paper on commutative Koszul algebras and Castelnuovo-Mumford regularities. Koszul algebras,  originally introduced  by Priddy \cite{P},  are graded $K$-algebras $R$ whose residue field $K$ has a linear free resolution as an $R$-module.  Here linear means that the non-zero entries of the matrices describing the maps in the resolution have degree $1$. For example, over the  symmetric algebra $S=\Sym_K(V)$ of a finite dimensional $K$-vector space $V$ the residue field $K$ is resolved by the Koszul complex which is linear. Similarly, for the exterior algebra   $\bigwedge_{K} V$ the residue field $K$ is resolved by the Cartan complex which is also linear.   In this paper we deal mainly with standard graded commutative $K$-algebras, that is, quotient rings of the polynomial ring $S$ by homogeneous ideals. 
The (absolute) Castelnuovo-Mumford regularity $\reg_S(M)$ is,  after Krull dimension and multiplicity, perhaps the most important invariant of  a finitely generated graded  $S$-module $M$, as it controls the vanishing of  both  syzygies and  the local cohomology modules of $M$.   By definition,  $\reg_S(M)$ is the least integer   $r$ such that the $i$-th syzygy module of $M$ is generated in degrees $\leq r+i$ for every $i$. By local duality, $\reg_S(M)$  can be characterized also as  the least number $r$ such that the local cohomology module $H^i_{\mm_S}(M)$  vanishes  in degrees $>r-i$ for every $i$. Analogously when  $R=S/I$  is a standard graded $K$-algebra and $M$ is a finitely generated graded  $R$-module one can define the relative Castelnuovo-Mumford regularity as the least integer $r$ such that the $i$-th syzygy module over $R$ of $M$ is generated in degrees $\leq r+i$ for every $i$. 
The main difference between the relative and the absolute regularity is that over $R$ most of the resolutions are infinite, i.e.~there are infinitely many syzygy modules,  and hence it is not at all clear whether $\reg_R(M)$ is finite. 
Avramov, Eisenbud  and Peeva gave in \cite{AP,AE} a beautiful characterization of the Koszul property in terms of the relative regularity: $R$ is Koszul iff  $\reg_R(M)$ is finite for every $M$ iff $\reg_R(K)$ is finite. 

From certain point of views,  Koszul algebras behave homologically as  polynomial rings. For instance $\reg_R(M)$ can be characterized in terms of regularity of truncated submodules (see \ref{pro28}).   On the other hand, ``bad" homological behaviors may occur over  Koszul algebras. For instance, modules might have  irrational Poinca\'re series over Koszul algebras. Furthermore, Koszul algebras appear quite frequently among the rings that are classically studied in commutative algebra, algebraic geometry and combinatorial commutative algebra. This mixture of similarities and differences with the polynomial ring and their  frequent appearance in classical constructions are some of the reasons that make Koszul algebras  fascinating, studied and beloved by commutative algebraists and algebraic geometers. In few words, a homological life is worth living in a Koszul algebra.  Of course there are other reasons for the popularity of Koszul algebras in the commutative and non-commutative setting, as, for instance,  Koszul duality, a  phenomenon that generalizes the duality between the symmetric and the exterior algebra, see \cite{BGS,BGSo,PP}. 

The structure of the paper is the following. Section \ref{s1} contains the characterization,  due to Avramov, Eisenbud and Peeva,   of Koszul algebras in terms of the finiteness of the regularity of modules, see \ref{AEP}. It contains also the definition of G-quadratic and LG-quadratic algebras and some fundamental questions concerning the relationships between these notions and the syzygies of Koszul algebras, see \ref{q4} and \ref{q5bis}. 

In Section \ref{howK} we present three elementary but powerful methods for proving that an algebra is Koszul: the existence of a Gr\"obner basis of quadrics, the transfer of Koszulness to quotient rings and Koszul filtrations. To illustrate these methods we apply them to Veronese algebras and Veronese modules. We prove that Veronese subalgebras of Koszul algebras are Koszul and that high enough Veronese subalgebras of any algebra are Koszul.  These and related results were proved originally in \cite{ ABH, BF, BM, CHTV, ERT}. 

Section \ref{stronKo} is devoted to two very strong versions of Koszulness: universally Koszul \cite{C1}  and absolutely Koszul \cite{IR}.  An algebra $R$ is universally Koszul if for every ideal $I\subset R$ generated by elements of degree $1$ one has $\reg_R(I)=1$. Given a  graded  $R$-module $M$ and $i\in \ZZ$ one defines $M_{\langle i\rangle}$ as the submodule of $M$ generated by the homogeneous component $M_i$  of degree $i$ of $M$. 
The $R$-module $M$ is componentwise linear if  $\reg_R(M_{\langle i\rangle})=i$ for every $i$ with $M_i\neq 0$.  The $K$-algebra $R$ is  absolutely Koszul if any finitely generated graded $R$-module $M$ has a componentwise linear $i$-th syzygy module for some $i\geq 0$.  Two major achievements are the complete characterization of the Cohen-Macaulay domains that are universally Koszul, see \cite{C1} or  \ref{uk2},  and the description of two classes of absolutely Koszul algebras, see \cite{IR} or  \ref{absKos}. We also present  some  questions related to these notions, in particular  \ref{q8} and \ref{q9}. 

In Section \ref{QuePro} we discuss some problems regarding the regularity of modules over Koszul algebras. Some are of computational nature, for instance \ref{q1}, and others are suggested by the analogy with the polynomial ring,  for example \ref{q12bis}. This section contains also some original results, in particular \ref{scarti1} and \ref{scarti2}, motivating the questions presented. 

Finally Section \ref{locca} contains a discussion on  local variants of the notion of Koszul algebra and the definition of Koszul modules. A local ring $(R, \mm, K) $ is called a Koszul ring if the associated graded ring  $\gr_{\mm}(R) $ is Koszul as a graded $K$-algebra.
The ring $R$ is called Fr\"oberg if its Poincar\'e series equals to $H_R(-z)^{-1}$, where   $H_R(z)$ denotes the Hilbert series of $R$. Any Koszul ring is Fr\"oberg. The converse holds in the graded setting and is unknown in the local case, see \ref{FK}. 
Large classes of local rings of almost minimal multiplicity are Koszul.  In \cite{HI} and \cite{IR}  a characterization of Koszulness of graded algebras is obtained in terms of the finiteness of the linear defect of the residue field, see \ref{infty}. It is an open problem whether the same characterization holds in the local case too, see \ref{loc1}.  

We thank Rasoul Ahangari, Lucho Avramov, Giulio Caviglia,  Ralf Fr\"oberg,  J\"uergen Herzog, Srikanth Iyengar, Liana \c Sega, Bart Snapp, Rekha Thomas and Matteo Varbaro  for useful discussion concerning the material presented in the paper.

 \section{Generalities}
\label{s1}
Let $K$ be a field and $R$ be a (commutative) standard graded $K$-algebra, that is a $K$-algebra with a decomposition  $R=\oplus_{i\in \NN} R_i$ (as an Abelian group) such that $R_0=K$,  $R_1$ is a finite dimensional $K$-vector space and $R_iR_j=R_{i+j}$ for every $i,j\in \NN$.  Let   $S$ be  the symmetric algebra over $K$ of $R_1$.  One has an induced surjection 
\begin{equation}
\label{canpre}
S=\Sym_K(R_1)\to R
\end{equation} 
of standard graded $K$-algebras. We call (\ref{canpre}) the canonical presentation of $R$. Hence $R$ is isomorphic (as a standard graded $K$-algebra) to $S/I$ where  $I$ is the kernel of (\ref{canpre}). In particular, $I$ is homogeneous and does not contain elements of degree $1$. We  say that $I$ defines $R$. Choosing a $K$-basis of $R_1$ the symmetric algebra  $S$ gets identified with  the polynomial ring  $K[x_1,\dots, x_n]$, with $n=\dim_K R_1$,  equipped with its standard graded structure (i.e. $\deg x_i=1$ for every $i$). Denote by $\mm_R$ the maximal homogeneous ideal of $R$. We may consider $K$ as a graded  $R$-module via the identification  $K=R/\mm_R$. \medskip 

\noindent {\bf Assumption}: With the exception of the last section, $K$-algebras  are always assumed to be standard graded,  modules and ideals are graded and finitely generated,  and  module homomorphisms have degree $0$. 
 \medskip 

For an $R$-module  $M=\oplus_{i\in \ZZ}  M_i$ we denote by $\HF(M,i)$ the Hilbert function of $M$ at $i$, that is,  $\HF(M,i)=\dim_K M_i$ and by $H_M(z)=\sum \dim_K M_i z^i\in \QQ[|z|][z^{-1}]$ the associated Hilbert series. 

Recall that a minimal graded free resolution of $M$ as an $R$-module is a complex of free $R$-modules 
$$\FF: \cdots \to  F_{i+1} \stackrel{\phi_{i+1}} \longrightarrow  F_{i}  \stackrel{\phi_{i}} \longrightarrow  F_{i-1} \to \cdots \to F_1\stackrel{\phi_{1}} \longrightarrow F_0\to 0$$
such that $H_i(\FF)=0$ for $i>0$ and $H_0(\FF)=M$, $\Image \phi_{i+1} \subseteq \mm_R F_i$ for every $i$. Such a resolution exists and it is unique up to an isomorphism of complexes, that is why we usually talk of ``the" minimal free (graded) resolution of $M$. 
 By definition, the $i$-th Betti number $\beta_i^R(M)$ of $M$ as an $R$-module is the rank of $F_i$. Each $F_i$ is a direct sum of shifted copies of $R$. The $(i,j)$-th graded Betti number $\beta_{ij}^R(M)$ of $M$ is the number of copies of $R(-j)$ that appear in $F_i$.  By construction one has $\beta_{i}^R(M)=\dim_K  \Tor^R_i(M,K)$  and $\beta_{ij}^R(M)=\dim_K  \Tor^R_i(M,K)_j$. The Poincar\'e series of $M$ is defined as: 
$$P_M^R(z)=\sum_i  \beta_i^R(M)z^i\in \QQ[|z|],$$ 
and its bigraded version is: 
$$P_M^R(s,z)=\sum_{i,j}  \beta_{i,j}^R(M)z^is^j\in \QQ[s][|z|].$$ 
We set 
$$t_i^R(M)=\sup\{ j : \beta_{ij}^R(M)\neq 0\}$$ 
where, by convention,  $t_i^R(M)=-\infty$ if $F_i=0$. By definition, $t_0^R(M)$ is the largest degree of a minimal generator of $M$. 
Two important invariants  that measure the ``growth" of the resolution of $M$ as an $R$-module are the projective dimension  
$$\projdim_R(M)=\sup\{ i : F_i\neq 0\}=\sup\{ i : \beta_{ij}^R(M)\neq 0 \mbox{ for some }j\}$$
and the Castelnuovo-Mumford regularity 
$$\reg_R(M)=\sup\{ j-i : \beta_{ij}^R(M)\neq 0\}=\sup\{ t_i^R(M)-i : i\in \NN\}.$$
 
We may as well consider $M$ as a module over the polynomial ring $S$ via (\ref{canpre}).  The regularity $\reg_S(M)$ of $M$ as an $S$-module has also a cohomological interpretation via local duality, (see  for example \cite{EG,BH}). Denoting by $H^i_{\mm_S}(M)$ the $i$-th local cohomology module with support on the maximal ideal of $S$ one has 
 $$\reg_S(M)=\max\{ j+i : H^i_{\mm_S}(M)_j\neq 0\}.$$
 Since $H^i_{\mm_R}(M)=H^i_{\mm_S}(M)$ for every $i$, nothing changes if on right hand side of the formula above we replace $S$ with $R$. 
 So $\reg_S(M)$ is in some sense the ``absolute" Castelnuovo-Mumford regularity. 
 Both $\projdim_R(M)$ and $\reg_R(M)$ can be infinite. 
 
 \begin{example}  Let  $R=K[x]/(x^3)$  and $M=K$.  Then $F_{2i}=R(-3i)$ and $F_{2i+1}=R(-3i-1)$ so that  $\projdim_R(M)=\infty$ and  $\reg_R(M)=\infty$. 
 \end{example} 
  
 Note that, in general, $\reg_R(M)$ is finite if $\projdim_R M$ is finite, but,  as we will see,  not the other way round. 

In the study of   minimal free resolutions over $R$,   the resolution $\KK_R$ of  the residue field $K$ as an $R$-module plays a prominent role. This is because   $\Tor_*^R(M,K)=H_*(M\otimes \KK_R)$ and hence $\beta_{ij}^R(M)=\dim_K H_i(M\otimes \KK_R)_j$.  A very important role is played also by the Koszul complex $K(\mm_R)$ on a minimal system of generators of the maximal ideal $\mm_R$ of $R$.

When is $\projdim_R(M)$ finite  for every $M$? The answer is given by one of the most classical results in commutative algebra:  the   Auslander-Buchsbaum-Serre Theorem.  We present here the graded variant of it  that can be seen as a strong version of the Hilbert syzygy theorem. 
\begin{theorem} 
\label{ABS}
The following conditions are equivalent:
\begin{itemize}
\item[(1)]  $\projdim_R M$ is finite for every $R$-module $M$, 
\item[(2)]  $\projdim_R K$ is finite, 
\item[(3)] $R$ is regular, that is, $R$ is a polynomial ring.  
\end{itemize}
When the conditions hold, then for every $M$ one has $\projdim_R M\leq \projdim_R K=\dim R$ and  the Koszul complex $K(\mm_R)$ resolves $K$ as an $R$-module, i.e. $\KK_R\cong K(\mm_R)$. 
\end{theorem}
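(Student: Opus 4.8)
The plan is to prove the cycle of implications $(3)\Rightarrow(1)\Rightarrow(2)\Rightarrow(3)$, together with the quantitative addendum.

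The implication $(3)\Rightarrow(1)$, together with the bound $\projdim_R M\leq \dim R$ and the identification $\KK_R\cong K(\mm_R)$, is the Hilbert syzygy theorem. I would prove it by noting that when $R=S=K[x_1,\dots,x_n]$, the Koszul complex $K(\mm_R)=K(x_1,\dots,x_n)$ on the variables is a minimal graded free resolution of $K$, since $x_1,\dots,x_n$ is a regular sequence; its length is $n=\dim R$, so $\projdim_R K=n$. For a general finitely generated graded $M$ one computes $\Tor_i^R(M,K)=H_i(M\otimes K(\mm_R))$, which vanishes for $i>n$ because the Koszul complex has length $n$; hence $\beta_i^R(M)=0$ for $i>n$ and $\projdim_R M\leq n=\projdim_R K$.

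The implication $(1)\Rightarrow(2)$ is trivial, taking $M=K$. For $(2)\Rightarrow(3)$, suppose $\projdim_R K<\infty$. Write $R=S/I$ via the canonical presentation \eqref{canpre} with $n=\dim_K R_1$; it suffices to show $I=0$. I would argue by contradiction. If $I\neq0$, pick a nonzero homogeneous $f\in I$ of least degree; since $I$ contains no elements of degree $1$, we have $\deg f\geq 2$. The key computation is to show that a nonzero element of $I$ of least degree forces $\Tor^R$ of $K$ to be nonzero in arbitrarily high homological degree, via a rank/Euler-characteristic argument on the minimal free resolution, or equivalently via the standard change-of-rings spectral sequence relating $\Tor^S$, $\Tor^R$ and the conormal module $I/I^2$. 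Concretely, one compares the Koszul complex $K(\mm_S)$ over $S$ (which is finite) with the minimal free resolution of $K$ over $R$: finiteness of $\projdim_R K$ combined with the graded Nakayama lemma and the inequality $t_i^R(K)\geq i$ forces, after a degree count, that the minimal resolution of $K$ over $R$ must in fact be the Koszul complex $K(\mm_R)$, which is a resolution only when $\mm_R$ is generated by a regular sequence of length $n$, i.e.\ only when $R$ is a polynomial ring.

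The main obstacle is the implication $(2)\Rightarrow(3)$: showing that finite projective dimension of $K$ forces $R$ to be regular. The cleanest route is probably to reduce to the local/complete situation and invoke that a Noetherian local ring of finite global dimension is regular, but since the excerpt wants a self-contained graded proof I would instead use the following: if $\projdim_R K=p<\infty$, then by the graded Auslander–Buchsbaum formula $\depth R=p$ as well (as $\depth K=0$), and one shows $\beta_{ij}^R(K)$ is controlled so tightly that $P_K^R(z)$ must be a polynomial; comparing with the fact that $P_K^R(z)\cdot H_R(-z)=1$ forces $H_R(z)=(1-z)^{-n}$, hence $I=0$. Making the ``controlled so tightly'' step rigorous — typically via the minimal model or the Tate resolution / acyclic closure of $K$, where a nonzero relation of degree $\geq2$ produces a divided-power variable contributing infinitely many $\Tor$'s — is the technical heart, and I would present it by citing the acyclic-closure construction rather than reproving it in full.
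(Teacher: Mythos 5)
The paper does not actually prove this theorem: it is quoted as the classical Auslander--Buchsbaum--Serre theorem (in its graded incarnation), so there is no internal proof to compare yours against. Judged on its own terms, your treatment of $(3)\Rightarrow(1)$, the bound $\projdim_R M\leq n$, and $(1)\Rightarrow(2)$ is correct and standard. The problem is $(2)\Rightarrow(3)$, which is the entire content of the theorem, and there your argument has both a gap and an outright error.

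The error: you invoke ``the fact that $P_K^R(z)\cdot H_R(-z)=1$''. This is not a fact for a general standard graded algebra --- it is the Fr\"oberg identity, which (as the paper itself records in Remark 2.6(6)) is \emph{equivalent} to $R$ being Koszul. Assuming it at this point is circular. What is true when $\projdim_R K<\infty$ is the bigraded Euler-characteristic identity: taking Hilbert series along the finite minimal resolution of $K$ gives $H_R(z)\cdot\sum_{i,j}(-1)^i\beta_{ij}^R(K)\,z^j=1$, so that $1/H_R(z)$ is a polynomial with integer coefficients. Writing $H_R(z)=h(z)/(1-z)^d$ with $d=\dim R$ and $h(1)=e(R)\neq 0$, the divisibility $h(z)\mid (1-z)^d$ forces $h(z)=1$, hence $H_R(z)=(1-z)^{-n}$ with $n=\dim_K R_1$, and comparison with $H_S(z)$ gives $I=0$. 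That is the clean graded argument you were reaching for; note it uses the bigraded alternating sum $\sum_{i,j}(-1)^i\beta_{ij}z^j$, not the single-variable Poincar\'e series evaluated at $-z$. The gap: your alternative sketch, that finiteness of $\projdim_R K$ ``after a degree count'' forces the minimal resolution to be $K(\mm_R)$, is asserted but not argued; proving that the minimal resolution collapses to the Koszul complex is essentially equivalent to the theorem itself (it is what the Tate/Gulliksen--Schoeller construction gives you only after you know the deviations vanish), so it cannot be waved through. If you replace the Fr\"oberg identity by the Euler-characteristic identity above, your proof closes and stays entirely within the graded framework the paper sets up.
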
 
\begin{remark}
\label{koscomplex} 
The Koszul complex $K(\mm_R)$ has three important features:
  
(1)  it is finite, 

(2) it has an algebra structure. Indeed it  is a DG-algebra and this has important consequences such as the algebra structure on the Koszul cycles and Koszul homology. See \cite{A} for the definition (and much more) on  DG-algebras. 
 
 (3)  the matrices describing its differentials have non-zero entries only of degree $1$.   
 \end{remark}
 
When $R$ is not a polynomial ring  $\KK_R$   does not satisfies  condition (1) in \ref{koscomplex}. Can  $\KK_R$ nevertheless satisfy (2) or (3) of \ref{koscomplex}? 

For (2) the answer is yes: $\KK_R$ has always a DG-algebra structure. Indeed a theorem, proved independently by  Gulliksen and Schoeller (see \cite[6.3.5]{A}), 
asserts that $\KK_R$ is obtained by the so-called Tate construction. This procedure  starts from $K(\mm_R)$ and builds  $\KK_R$ by  ``adjoining variables to kill homology", while preserving the DG-algebra structure, see \cite[6.3.5]{A}. 

Algebras $R$ such that $\KK_R$ satisfies condition (3) in \ref{koscomplex} in above are called Koszul: 

\begin{definition} The $K$-algebra $R$ is Koszul if  the matrices describing  the differentials of $\KK_R$ have non-zero entries only of degree $1$, that is, $\reg_R(K)=0$ or, equivalently,  $\beta_{ij}^R(K)=0$ whenever $i\neq j$. 
\end{definition}

Koszul algebras were originally introduced  by Priddy \cite{P} in his study of  homological properties of graded (non-commutative) algebras arising from algebraic topology, leaving the commutative case  ``for the  interested reader".  In  the recent volume  \cite{PP}  Polishchuk and Positselski present various surprising aspects of Koszulness. 
We collect below  a list of important facts about Koszul commutative algebras. We always refer to the canonical presentation (\ref{canpre}) of $R$. 
First we introduce a definition.

\begin{definition}\label{Gquad} 
 We say that $R$ is G-quadratic if  its defining ideal $I$ has a Gr\"obner basis of quadrics with respect to some coordinate system of $S_1$ and some term order $\tau$ on $S$. 
 \end{definition} 

\begin{remark} 
\label{R16}
(1) If $R$ is Koszul, then $I$ is generated by quadrics (i.e. homogeneous polynomials of degree $2$). Indeed, the condition $\beta_{2j}^R(K)=0$ for every $j\neq 2$ is equivalent to the fact that $I$ is defined by quadrics. But there are algebras defined by quadrics that are not Koszul.  For example $R=K[x,y,z,t]/I$ with  $I=(x^2,y^2,z^2,t^2,xy+zt)$ has $\beta_{34}^R(K)=5$.  

(2) If $I$ is generated by monomials of degree $2$ with respect to some coordinate system of $S_1$, then a simple filtration argument that we reproduce in Section \ref{howK}, see \ref{monKos}, shows  that $R$ is Koszul in a very strong sense. 

(3) If $I$ is generated by a regular sequence of quadrics, then $R$ is Koszul. This follows from a result of Tate \cite{T} asserting that, if $R$ is a complete intersection, then $\KK_R$ is obtained by $K(\mm_R)$ by adding polynomial variables in homological degree $2$ to kill $H_1(K(\mm_R))$. 

(4) If $R$ is G-quadratic,  then $R$ is Koszul. This follows from  (2) and from  the standard deformation argument showing that $\beta_{ij}^R(K)\leq \beta_{ij}^{A}(K)$ with $A=S/\ini_\tau(I)$. 

(5) On the other hand there are Koszul algebras that are not G-quadratic. One notes that  an ideal defining a G-quadratic  algebra must contain quadrics of ``low" rank. For instance, if $R$ is Artinian and G-quadratic then its defining ideal must contain the square of a linear form. But most Artinian complete intersection of quadrics do not contain the square of a linear form. For example, $I=(x^2+yz, y^2+xz, z^2+xy)\subset \CC[x,y,z]$ is an Artinian complete intersection not containing the square of a linear form. Hence $I$ defines a  Koszul and not G-quadratic algebra. See \cite{ERT} for a general result in this direction. 

(6) The Poincar\'e series $P_K^R(z)$ of $K$ as an $R$-module can be irrational, see \cite{An}. However for a Koszul algebra $R$ one has 
\begin{equation} 
P_K^R(z)H_R(-z)=1 
\label{HilPoi} 
\end{equation}
and hence $P_K^R(z)$ is rational. Indeed the equality (\ref{HilPoi}) turns out to be equivalent to the Koszul property of $R$, \cite[1]{F}. A necessary (but not sufficient) numerical condition for $R$ to be Koszul is that the formal power series $1/H_R(-z)$ has non-negative coefficients (indeed positive unless $R$ is a polynomial ring). Another numerical condition is the following: expand $1/H_R(-z)$ as 
$$\frac{ \Pi_{h \in 2\NN+1}  (1+z^h)^{e_h}}{ \Pi_{h\in 2\NN+2} (1-z^h)^{e_h}}$$
with $e_h\in \ZZ$, see \cite[7.1.1]{A}. The numbers $e_h$ are the ``expected" deviations. If $R$ is Koszul then $e_h\geq 0$ for every $h$, (indeed $e_h>0$ for every $h$ unless $R$ is a complete intersection). For example, if $H(z)=1+4z+5z^2$  then the coefficient of $z^6$ in $1/H(-z)$  is negative and the third expected deviation is $0$. So for two reasons an algebra with Hilbert series $H(z)$, as the one in (1), cannot be Koszul.    \end{remark} 

The following  characterization of the Koszul property in terms of  regularity  is formally similar to the Auslander-Buchsbaum-Serre Theorem \ref{ABS}.

\begin{theorem}[Avramov-Eisenbud-Peeva]
\label{AEP}
The following conditions are equivalent:
\begin{itemize}
\item[(1)]  $\reg_R (M)$ is finite for every $R$-module $M$, 
\item[(2)]  $\reg_R (K)$ is finite, 
\item[(3)] $R$ is Koszul.  
\end{itemize} 
\end{theorem}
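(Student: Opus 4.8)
The plan is to prove the cycle of implications $(3)\Rightarrow(1)\Rightarrow(2)\Rightarrow(3)$; the implication $(1)\Rightarrow(2)$ is trivial since $K$ is in particular an $R$-module, so the real content lies in the other two. Throughout I will work with the canonical presentation $S\to R$ and freely pass between $R$-modules and their images in $\Tor^R_*(-,K)=H_*(-\otimes\KK_R)$, so that Betti numbers are read off the complex $M\otimes\KK_R$.

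For $(3)\Rightarrow(1)$, assume $R$ is Koszul, so $\reg_R(K)=0$, equivalently $\beta^R_{ij}(K)=0$ for $i\neq j$. The key tool is a change-of-rings / spectral-sequence bound for $t^R_i(M)=\sup\{j:\beta^R_{ij}(M)\neq 0\}$. First I would reduce to the case $t_0^R(M)=0$, i.e.\ $M$ generated in degree $0$, by shifting. Then one induces on homological degree: writing a minimal presentation $0\to N\to F_0\to M\to 0$ with $F_0$ free generated in degree $0$, the long exact sequence in $\Tor^R(-,K)$ gives $\beta^R_{i,j}(M)$ controlled by $\beta^R_{i-1,j}(N)$ together with $\beta^R_{i,j}(F_0)$, the latter vanishing for $i\geq 1$. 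The crucial quantitative input is that $N\subseteq\mm_R F_0$ is generated in degrees $\geq 1$, and more sharply that $\reg_R(N)\le\reg_R(M)+1$ — this is where Koszulness of $R$ enters, via the fact (which I would prove as a lemma, or cite \ref{pro28}-style truncation results) that for a Koszul algebra the regularity of a module is governed by that of its truncations, so that syzygies do not accumulate extra degree beyond the linear shift. Iterating yields $t_i^R(M)\le\reg_R(M)+i$ and hence $\reg_R(M)<\infty$; in fact one gets the clean bound $\reg_R(M)\le\reg_S(M)$, but finiteness is all that is needed.

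For $(2)\Rightarrow(3)$, the task is to upgrade ``$\reg_R(K)$ finite'' to ``$\reg_R(K)=0$''. Suppose $\reg_R(K)=c<\infty$; I want $c=0$. The idea is to exploit the multiplicative structure of $\Tor^R_*(K,K)$: it is a graded $K$-algebra (via the diagonal $K\to K\otimes K$), generated in homological degree $1$ is not automatic, but one always has that $\Tor^R_1(K,K)\cong\mm_R/\mm_R^2$ sits in internal degree $1$, and products of classes from $\Tor^R_1$ land in $\Tor^R_i$ in internal degree $i$. The leverage comes from a result on the structure of the minimal resolution: if $\beta^R_{i,j}(K)\neq 0$ for some $j>i$, pick $i$ minimal with this property; then $i\geq 2$ and, analyzing the tail of the resolution of $K$ together with the Koszul-homology (DG-algebra) structure on $\KK_R$ coming from the Tate construction mentioned in the excerpt, one derives that some syzygy appears in degree $>i$ already at a smaller homological step, contradicting minimality of $i$. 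Equivalently, and perhaps more cleanly, one uses the rigidity statement that $t^R_i(K)\le t^R_{i-1}(K)+1$ always holds (because $\Tor^R_i(K,K)$ is a subquotient of $\Tor^R_{i-1}(K,K)\otimes R_1$-type data coming from $\KK_R$ having linear first differential), together with $t_1^R(K)=1$ and the finiteness $\reg_R(K)<\infty$, to force $t^R_i(K)=i$ for all $i$.

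I expect the main obstacle to be making the inductive bound in $(3)\Rightarrow(1)$ genuinely rigorous: the naive long-exact-sequence argument only gives $\reg_R(N)\le\max\{\reg_R(M)+1,\;\text{something involving }\reg_R(\mm_R)\}$, and closing this requires either the truncation characterization of regularity over Koszul algebras (Proposition~\ref{pro28}) or a direct spectral sequence $\Tor^S_p(\Tor^R_q(M,S\text{-stuff}),K)$ comparing resolutions over $S$ and over $R$. Deciding which of these to take as a black box — and checking that the cited truncation result is not itself secretly equivalent to the theorem — is the delicate point; I would take the Avramov--Eisenbud/Avramov--Peeva change-of-rings inequality $t_i^R(M)\le t_i^S(M) + t_?^R(K)$-type estimates as the technical engine, since those are the results originally due to the authors named in the theorem.
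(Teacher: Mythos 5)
The first thing to say is that the paper contains no proof of Theorem \ref{AEP}: it is a survey statement, with the implication $(3)\Rightarrow(1)$ attributed to Avramov--Eisenbud \cite{AE} and the implication ``$\reg_R(K)<\infty\Rightarrow\reg_R(K)=0$'' to Avramov--Peeva \cite{AP}. So your proposal must stand on its own, and in both non-trivial directions it has a genuine gap. For $(3)\Rightarrow(1)$, the inductive engine is circular: the inequality $\reg_R(N)\le\reg_R(M)+1$ for the first syzygy $N$ follows from Lemma \ref{blabla}(1) over \emph{any} standard graded algebra (so it is not where Koszulness enters), and it is vacuous in exactly the case at issue, namely when $\reg_R(M)=\infty$; iterating it gives $t_i^R(M)\le\reg_R(M)+i$, which bounds the $t_i$ by the very supremum whose finiteness is to be proved. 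The substantive statement is $\reg_R(M)\le\reg_S(M)$ (the first assertion of Proposition \ref{pro28}), and you propose to take it, or the equivalent truncation characterization, as a black box; but that inequality \emph{is} the Avramov--Eisenbud implication, so with that move nothing is proved. A correct argument has to compare the $R$-resolution with the finite $S$-resolution (or filter $M_{\geq q}$ by modules concentrated in single degrees, each a direct sum of shifted copies of $K$, and use $\reg_R(K)=0$); your sketch never carries out such a comparison.

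For $(2)\Rightarrow(3)$ the situation is worse: the ``rigidity statement'' $t_i^R(K)\le t_{i-1}^R(K)+1$ that you offer as the clean route is false. Together with $t_0^R(K)=0$ it would force $t_i^R(K)\le i$ for every standard graded algebra, i.e.\ every such algebra would be Koszul. Concretely, $R=K[x]/(x^3)$ has $t_1^R(K)=1$ and $t_2^R(K)=3$, and the paper's own quadratic non-Koszul example $K[x,y,z,t]/(x^2,y^2,z^2,t^2,xy+zt)$ in \ref{R16}(1) has $t_2^R(K)=2$ but $t_3^R(K)\ge 4$. Your alternative sketch (choose $i$ minimal with an off-diagonal Betti number and derive a contradiction from the DG-algebra structure of $\KK_R$) is too vague to check and, as written, never invokes the hypothesis $\reg_R(K)<\infty$, which must enter essentially since quadratic non-Koszul algebras exist. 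The actual Avramov--Peeva argument uses the homotopy Lie algebra $\pi(R)$ and the Hopf algebra structure of $\Ext_R(K,K)$ to propagate a single off-diagonal class into classes whose internal degree grows strictly faster than the homological degree, contradicting finiteness of $\reg_R(K)$. As it stands, neither hard implication is established by the proposal.
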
  

 Avramov and Eisenbud proved in \cite{AE} that every module has finite regularity over a Koszul algebra. 
 Avramov and Peeva showed in \cite{AP} that if  $\reg_R (K)$ is finite then it must be $0$. Indeed they proved a more general result for graded algebras that are not necessarily standard. 

If $M$ is an $R$-module generated by elements of a given degree, say $d$, we say that it has a linear resolution over $R$  if $\reg_R(M)=d$.  For $q\in \ZZ$ we set $M_{\langle q\rangle}$ to be the submodule of $M$ generated by $M_q$ and set $M_{\geq q}=\oplus_{i\geq q} M_i$.  The module $M$ is said to be componentwise linear over $R$ if  $M_{\langle q\rangle}$ has a linear resolution for every $q$. 
The (absolute) regularity of a module can be characterized as follows: 
$$\begin{array}{rl}
\reg_S(M)=&\min\{ q\in \ZZ : M_{\geq q} \mbox{ has a linear resolution}\}\\
=&\min\{ q\geq t_0^S(M) :  M_{\langle q\rangle} \mbox{ has a linear resolution}\}
\end{array}$$

One of the motivations of Avramov and Eisenbud in \cite{AE} was to establish a similar characterization for the relative regularity over a Koszul algebra. They proved: 

\begin{proposition} 
\label{pro28}
Let $R$ be a Koszul algebra  and $M$ be an $R$-module.   Then: 
$$\reg_R(M)\leq  \reg_S(M)$$ and 
$$\begin{array}{rl}
\reg_R(M)=&\min\{ q\in \ZZ : M_{\geq q} \mbox{ has a linear $R$-resolution}\}\\
=&\min\{ q\geq t_0^R(M) :  M_{\langle q\rangle} \mbox{ has a linear $R$-resolution}\}
\end{array}$$
\end{proposition}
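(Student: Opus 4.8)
The plan is to split the statement into two tasks: deriving the two displayed characterizations of $\reg_R(M)$, which I expect to be routine, and establishing the inequality $\reg_R(M)\le\reg_S(M)$, which is the substantial point and is essentially the theorem of Avramov--Eisenbud from \cite{AE}. The two tasks interact: once the first characterization is available over $R$, and given its well-known counterpart over $S$ (stated just above), the inequality reduces to the single assertion that an $R$-module $N$ generated in one degree and having a linear resolution over $S$ also has a linear resolution over $R$ --- this is the heart of the matter.

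For the characterizations I would use only three elementary inputs. First, the standard bounds along a short exact sequence $0\to A\to B\to C\to 0$ of $R$-modules, $\reg_R(B)\le\max\{\reg_R(A),\reg_R(C)\}$ and $\reg_R(A)\le\max\{\reg_R(B),\reg_R(C)+1\}$, both read off from the long exact sequence of $\Tor^R_\bullet(-,K)$. Second, that a finite length $R$-module $L$ has $\reg_R(L)\le\max\{j:L_j\neq 0\}$: indeed $L$ admits a filtration with successive quotients of the form $K(-a)$ with $a\le\max\{j:L_j\neq 0\}$, and $\reg_R(K(-a))=a$ because $R$ is Koszul. Third, that for $q\ge t_0^R(M)$ the module $M$ is generated in degrees $\le q$, whence $M_j=R_{j-q}M_q$ for all $j\ge q$, so that $M_{\ge q}=M_{\langle q\rangle}$ and this module is generated in degree $q$. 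Now set $r=\reg_R(M)$, which is finite by \ref{AEP} and satisfies $r\ge t_0^R(M)$. Applying the second bound to $0\to M_{\ge q}\to M\to M_{<q}\to 0$, in which $M_{<q}$ has finite length and lives in degrees $<q$, gives $\reg_R(M_{\ge q})\le\max\{r,q\}$; taking $q=r$ and recalling $\reg_R(M_{\ge r})\ge t_0^R(M_{\ge r})\ge r$, we get $\reg_R(M_{\ge r})=r$, so $M_{\ge r}=M_{\langle r\rangle}$ has a linear $R$-resolution. Conversely, if $M_{\ge q}$ (resp.\ $M_{\langle q\rangle}$ with $q\ge t_0^R(M)$) has a linear resolution, so that $\reg_R(M_{\ge q})=q$, the first bound applied to the same sequence yields $\reg_R(M)\le\max\{q,q-1\}=q$. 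Together these two observations identify both minima with $r$.

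It remains to prove the reduction statement: if $N$ is an $R$-module generated in a single degree --- after a shift, in degree $0$, with $\reg_S(N)=0$ --- with minimal (linear) $S$-free resolution $G_\bullet$, $G_i=S(-i)^{b_i}$, then $N$ has a linear $R$-resolution. Tensoring $G_\bullet$ with $R$ gives a complex $\bar G_\bullet$ of free $R$-modules with $H_0(\bar G_\bullet)=N$ and $H_q(\bar G_\bullet)=\Tor^S_q(N,R)$; since $G_\bullet$ is exact and $R$ coincides with $S$ in degrees $\le 1$, the module $\Tor^S_q(N,R)$ is a subquotient of $R(-q)^{b_q}$ whose component in degree $q$ vanishes, hence it is concentrated in degrees $\ge q+1$ for every $q\ge 1$, while $\Tor^S_0(N,R)=N$ lives in degrees $\ge 0$. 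I would then invoke the change-of-rings spectral sequence $E^2_{p,q}=\Tor^R_p(\Tor^S_q(N,R),K)\Rightarrow\Tor^S_{p+q}(N,K)$: linearity of $G_\bullet$ forces the abutment $\Tor^S_n(N,K)$ to be concentrated in internal degree $n$, and chasing internal degrees through the pages --- using the degree estimates above for the coefficient modules $\Tor^S_q(N,R)$ --- should pin $\Tor^R_p(N,K)$ into internal degree $p$, that is, make the $R$-resolution linear. I expect this chase to be the real obstacle: one must control the regularity over $R$ of the auxiliary modules $\Tor^S_q(N,R)$ that occur in the higher $E^2$-terms, and it is precisely here that the Koszul hypothesis on $R$ has to be brought in. In a survey I would most likely cite \cite{AE} for this inequality and present only the clean deduction of the characterizations above.
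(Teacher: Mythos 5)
The paper offers no proof of this proposition: it is quoted from \cite{AE} with a citation, so there is no in-paper argument to compare yours against step by step. Your derivation of the two characterizations is correct and is the standard one. It uses only Lemma \ref{blabla}(1), the finiteness of $\reg_R(M)$ supplied by Theorem \ref{AEP}, the Koszul hypothesis through $\reg_R(K(-a))=a$ (so that the finite-length truncation $M_{<q}$ has regularity at most $q-1$), and the identification $M_{\geq q}=M_{\langle q\rangle}$ for $q\geq t_0^R(M)$. That part stands on its own, with the usual proviso that ``$M_{\geq q}$ has a linear resolution'' must be read as ``has a $q$-linear resolution,'' i.e. $M_{\geq q}$ is generated in degree $q$ and $\reg_R(M_{\geq q})=q$; this is the reading your converse step already uses.

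The inequality $\reg_R(M)\leq \reg_S(M)$ is another matter, and your own diagnosis is accurate: the spectral-sequence chase you outline does not close. The only estimates you establish for the coefficient modules, namely that $\Tor_q^S(N,R)$ is concentrated in internal degrees $\geq q+1$ for $q\geq 1$ (this part is correct), are \emph{lower} bounds. Feeding them into $E^2_{p,q}=\Tor_p^R(\Tor_q^S(N,R),K)$ shows that the successive subquotients $E^r_{p-r,r-1}$ of the filtration of $\Tor_p^R(N,K)=E^2_{p,0}$ live in degrees $\geq p$, and hence only that $\Tor_p^R(N,K)$ lives in degrees $\geq p$ --- which is automatic for a module generated in degree $0$. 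What is needed is the opposite bound, $\Tor_p^R(N,K)_j=0$ for $j>p$, and for that one must bound $t_{p-r}^R\bigl(\Tor_{r-1}^S(N,R)\bigr)$ from \emph{above}, i.e. control the regularity over $R$ of the very auxiliary modules the construction produces; nothing in your setup supplies this, and a naive induction on $p$ does not reach it because those modules are not the module you started with. (The same obstruction appears if one replaces the spectral sequence by Lemma \ref{blabla}(2) applied to $G_\bullet\otimes_S R$.) This missing step is precisely the content of the Avramov--Eisenbud theorem, so deferring to \cite{AE} for the inequality, as you propose and as the paper itself does, is the right call --- but the phrase ``chasing internal degrees should pin $\Tor_p^R(N,K)$ into degree $p$'' should not be mistaken for a proof.
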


Another invariant that measures the growth of the degrees of the syzygies of a module  is the slope: 
$$\slope_R(M)=\sup\{\frac{ t^R_i(M)-t^R_0(M)}{i} : i>0\}.$$
A useful feature of the slope is that it is finite (no matter if $R$ is Koszul or not). Indeed with respect to the canonical presentation (\ref{canpre}) one has: 
$$\slope_R(M)\leq \max\{ \slope_S(R), \slope_S(M)\}$$
see \cite[1.2]{ACI},  and the right hand side is finite since $S$ is a polynomial ring. 
Backelin defined in \cite{B} the (Backelin) rate of $R$ to be
$$\Rate(R)=\slope_R(\mm_R)$$
as a measure of the failure of the Koszul property.  By the very definition, one has $\Rate(R)\geq 1$ and  $R$ is Koszul if and only if $\Rate(R)=1$. 

We close the section with a technical lemma:   

\begin{lemma}
\label{blabla}
(1)   Let $0\to M_1\to M_2\to M_3\to 0$
be a short exact sequence of $R$-modules. Then one has
 $$\begin{array}{rl}
 \reg_R(M_1)\leq &\!\!\!\!\!\!  \max\{ \reg_R(M_2),  \reg_R(M_3)+1\},\\
 \reg_R(M_2)\leq & \!\!\!\!\!\! \max\{ \reg_R(M_1),  \reg_R(M_3)\},\\
 \reg_R(M_3)\leq &\!\!\!\!\!\!  \max\{ \reg_R(M_1)-1,  \reg_R(M_2)\}.
 \end{array}
 $$
(2) Let $$\MM: \cdots \to M_i\to\cdots\to M_2\to M_1\to M_0\to 0$$ be a complex  of $R$-modules. Set $H_i=H_i(\MM)$. Then for every $i\geq 0$ one has
 $$t_i^R(H_0)\leq \max\{a_i,b_i\}$$ where 
 $a_i=\max\{ t_j^R(M_{i-j}) : j=0,\dots, i\}$
 and  $b_i=\max\{ t_j^R(H_{i-j-1}) : j=0,\dots,i-2\}$. 
 
\noindent Moreover one has  
$$\reg_R(H_0)\leq \max\{a,b\}$$
 where $a=\sup\{ \reg_R (M_j)-j : j\geq 0\}$ and $b=\sup\{ \reg_R (H_j)-(j+1) : j\geq 1\}$.
 \end{lemma}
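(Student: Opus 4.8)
\textbf{Proof plan for Lemma \ref{blabla}.}

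The plan is to prove part (1) first by standard long-exact-sequence arguments and then derive part (2) from part (1) by splitting the complex into short exact sequences.

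For part (1), I would apply the functor $\Tor^R_\bullet(-,K)$ to the short exact sequence $0\to M_1\to M_2\to M_3\to 0$, obtaining the long exact sequence
$$\cdots \to \Tor^R_{i+1}(M_3,K)_j \to \Tor^R_i(M_1,K)_j \to \Tor^R_i(M_2,K)_j \to \Tor^R_i(M_3,K)_j \to \cdots$$
From this, $t_i^R(M_1)\le\max\{t_i^R(M_2),\,t_{i+1}^R(M_3)\}$, $t_i^R(M_2)\le\max\{t_i^R(M_1),\,t_i^R(M_3)\}$, and $t_i^R(M_3)\le\max\{t_i^R(M_2),\,t_{i-1}^R(M_1)\}$. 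Subtracting $i$ and taking suprema over $i$ gives exactly the three claimed inequalities for $\reg_R$: for the first, $t_i^R(M_1)-i\le\max\{t_i^R(M_2)-i,\ (t_{i+1}^R(M_3)-(i+1))+1\}\le\max\{\reg_R(M_2),\reg_R(M_3)+1\}$, and similarly for the other two. This part is routine.

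For part (2), I would decompose the complex $\MM$ into short exact sequences involving the cycles $Z_i=\ker(M_i\to M_{i-1})$, the boundaries $B_i=\Image(M_{i+1}\to M_i)$, and the homology $H_i=Z_i/B_i$, namely
$$0\to Z_i\to M_i\to B_{i-1}\to 0,\qquad 0\to B_i\to Z_i\to H_i\to 0,$$
with $B_{-1}=0$ so that $Z_0=M_0$ and $B_0=Z_0\cap\ldots$ Actually more simply: $0\to B_0\to M_0\to H_0\to 0$, so $t_i^R(H_0)\le\max\{t_i^R(M_0),\,t_{i-1}^R(B_0)\}$ by part (1) (the shifted version for $t$). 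Then one bounds $t_\bullet^R(B_0)$ using $0\to Z_1\to M_1\to B_0\to 0$, giving $t_j^R(B_0)\le\max\{t_j^R(M_1),\,t_{j+1}^R(Z_1)\}$, and bounds $t_\bullet^R(Z_1)$ using $0\to B_1\to Z_1\to H_1\to 0$, giving $t_j^R(Z_1)\le\max\{t_j^R(B_1),\,t_j^R(H_1)\}$, and so on, descending through the complex. Unwinding this recursion, an index-bookkeeping argument shows that $t_i^R(H_0)$ is bounded by the maximum of the terms $t_j^R(M_{i-j})$ (contributions from the modules, with total homological shift $i$) and the terms $t_j^R(H_{i-j-1})$ for $j\le i-2$ (contributions from the lower homology, picking up one extra degree from the $0\to B\to Z\to H\to 0$ sequences). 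This yields $t_i^R(H_0)\le\max\{a_i,b_i\}$, and then subtracting $i$ and taking the supremum over $i$ gives $\reg_R(H_0)\le\max\{a,b\}$, since $\sup_i(a_i-i)=\sup_{i,j}(t_j^R(M_{i-j})-i)=\sup_{k,j}(t_j^R(M_k)-j-k)\le\sup_k(\reg_R(M_k)-k)=a$ and similarly $\sup_i(b_i-i)\le b$.

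The main obstacle is the bookkeeping in part (2): one must track carefully how the homological shifts accumulate as one walks down the complex through alternating $Z$-versus-$M$ and $B$-versus-$H$ short exact sequences, and verify that the cycles $Z_i$ and boundaries $B_i$ for large $i$ do not contribute spurious terms — in other words, that the recursion terminates correctly and the only surviving contributions are from the $M_j$ themselves and from the $H_j$ with the stated shift. A clean way to handle this is induction on $i$: assume the bound for all smaller indices and for all complexes, apply it to the truncated/shifted complexes arising from the short exact sequences above, and assemble. The inequalities for $\reg_R$ then follow formally, as indicated.
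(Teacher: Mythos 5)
Your proposal is correct and follows essentially the same route as the paper: the long exact sequence in $\Tor(-,K)$ for part (1), and for part (2) breaking the complex into short exact sequences through cycles, boundaries and homology, proving the $t_i^R(H_0)$ bound by induction on $i$, and then translating into the regularity statement. The index bookkeeping you outline (checked, e.g., at $i=2$ via $0\to B_0\to M_0\to H_0\to 0$, $0\to Z_1\to M_1\to B_0\to 0$, $0\to B_1\to Z_1\to H_1\to 0$) works out exactly as claimed.
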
 
 \begin{proof} (1) follows immediately by considering the long exact sequence obtained by applying $\Tor(K,-)$. For (2) one breaks the complex into short exact sequences and proves by induction on $i$  the inequality for $t_i^R(H_0)$.   Then one deduces the second inequality by translating the first  into a statement about regularities. 
  \end{proof} 

 \medskip
 
We collect below some problems  about the Koszul property and the existence of Gr\"obner bases of quadrics.
Let us recall the following
 
 \begin{definition}
 \label{LG-quad}
 A $K$-algebra $R$ is LG-quadratic if there exists a G-quadratic algebra $A$ and a regular sequence of linear forms $y_1,\dots,y_c$ such that $R\simeq A/(y_1,\dots,y_c)$. 
 \end{definition} 
 
 We have the following implications: 
 \begin{equation}
 \label{GLG}
   \mbox{G-quadratic } \Rightarrow \mbox{LG-quadratic } \Rightarrow \mbox{Koszul}  \Rightarrow \mbox{quadratic }
   \end{equation}
 
As discussed in \ref{R16} the third  implication in  (\ref{GLG}) is  strict.  The following remark, due to Caviglia, in connection with \ref{R16}(5) shows that also the first implication in  (\ref{GLG}) is strict. 
 \begin{remark} 
 Any complete intersection $R$ of quadrics is LG-quadratic. 
 
 \noindent Say $R=K[x_1,\dots,x_n]/(q_1,\dots,q_m)$ then set 
 $$A=R[y_1,\dots,y_m]/(y_1^2+q_1,\dots,y_m^2+q_m)$$ and note that $A$ is G-quadratic for obvious reasons and $y_1,\dots,y_m$ is a regular sequence in $A$ by codimension considerations. 
 \end{remark} 
 
 But we do not know an example of a Koszul algebra that is not LG-quadratic. So we ask: 
 
 \begin{question}
\label{q4} 
Is any Koszul algebra LG-quadratic?  
\end{question}

Our feeling is that the answer should be negative. But how can we exclude that a Koszul algebra is LG-quadratic? One can look at the $h$-vector (i.e. the numerator of the Hilbert series)  since it is invariant under Gr\"obner deformation and modifications as the one involved in the definition  of LG-quadratic. Alternatively one can look at syzygies over the polynomial ring because they can only grow under such operations. These observations lead  to a new question: 

 \begin{question}
\label{q5} 
Is the $h$-vector of any  Koszul algebra $R$ the $h$-vector of an algebra defined by quadratic monomials? And, if yes, does there exist an algebra $A$ with  quadratic monomial relations, $h$-vector equal to that of $R$ and satisfying $\beta^S_{ij}(R)\leq \beta^{T}_{ij}(A)$ for every $i$ and $j$? Here $S$ and $T$ denote the polynomial rings canonically projecting onto  $R$ and $A$.   \end{question}

 A negative answer to \ref{q5} would imply a negative answer to \ref{q4}. Note that any $h$-vector of an algebra defined by quadratic monomials is also the $h$-vector of an algebra defined by square-free quadratic monomials (by using the  polarization process). The simplicial complexes associated to square-free quadratic monomial ideals are  called flag. There has been a lot of activity concerning combinatorial properties and characterizations of $h$-vectors and $f$-vectors  of  flag simplicial complexes, see \cite{CV} for recent results and for a survey of what is known and conjectured. Here we just mention that  Frohmader has  proved in  \cite{Fm}  a  conjecture of Kalai asserting that the  $f$-vectors of flag simplicial complexes  are $f$-vectors of balanced simplicial complexes.  
 
 Regarding the inequality for  Betti numbers in \ref{q5},  LG-quadratic algebras $R$ satisfy the following restrictions  
 \begin{itemize} 
 \item[(1)] $t_i^S(R)\leq 2i$,
 \item[(2)] $t_i^S(R)<2i$ if $t_{i-1}^S(R)<2(i-1)$, 
 \item[(3)] $t_i^S(R)<2i$ if $i>\dim S-\dim R$, 
 \item[(4)] $\beta^S_i(R)\leq \binom{\beta_1^S(R)}{i}$,
 \end{itemize} 
deduced from the deformation to the  (non-minimal) Taylor resolution of quadratic monomial ideals, see for instance \cite[4.3.2]{MS}.    
As shown in   \cite{ACI} the same restrictions are satisfied by any Koszul algebra,  with the exception of possibly  (4).  So we ask: 

 \begin{question}
\label{q5bis} 
Let $R$ be a Koszul algebra quotient of the polynomial ring $S$. Is it true that   $\beta^S_i(R)\leq \binom{\beta_1^S(R)}{i}$? 
\end{question}

 It can be very difficult to decide whether a given Koszul algebra is G-quadratic. In the nineties Peeva and Sturmfels asked whether the coordinate ring 
 $$PV=K[x^3,x^2y,x^2z,xy^2,xz^2,y^3,y^2z,yz^2,z^3]$$ 
 of  the pinched Veronese  is Koszul.  For about a decade this  was a benchmark example for  testing new techniques for proving Koszulness. In 2009 Caviglia \cite{Ca2}  gave the first proof of the Koszulness of $PV$. Recently a new one has been presented in \cite{CC} that applies also to a larger family of rings including all the general projections to $\PP^8$ of the Veronese surface in $\PP^9$. 
 The problem remains to decide whether: 
 \begin{question}
\label{q6} 
Is $PV$ G-quadratic?
  \end{question}
The answer is negative if one considers the toric coordinates only  (as it can be checked by computing the associated  
Gr\"obner fan using  CaTS \cite{Cats}),  but unknown in general. There are plenty of quadratic monomial ideals defining algebras with the Hilbert function of $PV$ and larger Betti numbers.  
  
The algebra $PV$ is generated by all monomials in $n=3$ variables of degree $d=3$ that are supported on at most $s=2$ variables. By varying  the  indices $n,d,s$ one gets a  family of pinched Veronese algebras $PV(n,d,s)$ and it is natural to ask: 

\begin{question}
\label{q7} 
For which values of $n,d,s$ is $PV(n,d,s)$ quadratic or Koszul?
\end{question}
Not all of them are quadratic, for instance $PV(4,5,2)$ is not. Questions as \ref{q7} are very common in the literature: in a family of algebras one asks which ones are quadratic or Koszul or if quadratic and Koszul are equivalent properties for the algebras in the family.  For example in \cite[6.10]{CRV} the authors ask: 

\begin{question}
\label{q7bis} 
Let $R$ be a  quadratic Gorenstein algebra with Hilbert series $1+nz+nz^2+z^3$. Is $R$ Koszul?  
\end{question}
For $n=3$ the answer is obvious as $R$ must be a complete intersection of quadrics and for $n=4$ the answer is positive by \cite[6.15]{CRV}. See \ref{1nn1} for results concerning this family of  algebras.

\section{How to prove that an algebra is Koszul?}\label{howK}
 To prove that an algebra  is Koszul is usually a difficult task.  There are examples, due to Roos,  showing that a sort of Murphy's  law (anything that can possibly go wrong, does) holds in this context. Indeed there  exists a family of quadratic algebras  $R(a)$ depending on an integer $a>1$ such that  the Hilbert series  of $R(a)$ is $1+6z+8z^2$ for every $a$. Moreover $K$ has a linear resolution for $a$ steps and a non-linear syzygy in homological position $a+1$, see \cite{R1}. So there is no statement of the kind: if $R$ is an algebra with Hilbert series  $H$ then there is a number $N$ depending on $H$ such that if the resolution of $K$ over $R$ is linear for $N$ steps it will be linear forever. 
 
 The goal of this section is to present some techniques to prove that an algebra is Koszul (without pretending they  are the most powerful or interesting).  For the sake of illustration we will apply these techniques to discuss the Koszul properties of Veronese algebras and modules.  The material we present is taken from various sources, see \cite{ABH,ACI, B, BF, BM, BCR, CDR,CHTV, CRV, CTV, HHR, ERT,S}. 
 
 \subsection{Gr\"obner basis of quadrics}
 \ 
 
The simplest way to prove that an algebra is Koszul is to show that it is G-quadratic. A weak point of this prospective is that Gr\"obner bases refer to a system of coordinates and a term order.  As said earlier, not all the  Koszul algebras are G-quadratic. On the other hand many of the classical constructions in  commutative algebra and algebraic geometry  lead to algebras that have a privileged, say natural,  system of coordinates. For instance, the coordinate ring of the Grassmannian comes equipped with the Pl\"ucker coordinates.  Toric varieties come with their toric coordinates. So one looks for a Gr\"obner basis of quadrics with respect to the natural system of coordinates. It turns out that many of the classical algebras (Grassmannian, Veronese, Segre, etc..) do have Gr\"obner bases of quadrics in the natural system of coordinates. Here we treat in details the Veronese case: 

\begin{theorem}
\label{GBqVero}
Let $S=K[x_1,\dots,x_n]$ and $c\in \NN$. Then the Veronese subring $S^{(c)}=\oplus_{j\in \NN}  S_{jc}$ is defined by a Gr\"obner basis of quadrics.  
\end{theorem}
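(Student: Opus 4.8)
The plan is to exhibit an explicit presentation of $S^{(c)}$ by a polynomial ring in the ``obvious'' variables and to write down a term order for which the defining ideal has a quadratic Gröbner basis. Concretely, let $M$ be the set of monomials of degree $c$ in $x_1,\dots,x_n$, and introduce a variable $Y_u$ for each $u\in M$, so that $S^{(c)}=T/I$ where $T=K[Y_u:u\in M]$ and $I$ is the kernel of the map $Y_u\mapsto u$. The ideal $I$ is the toric ideal of the Veronese, so it is generated by the binomials $Y_{u_1}Y_{u_2}-Y_{v_1}Y_{v_2}$ together with higher-degree binomials coming from the relations $u_1u_2=v_1v_2$, and more generally $\prod Y_{u_i}-\prod Y_{v_j}$ whenever $\prod u_i=\prod v_j$ in $S$. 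The first step is to fix a convenient term order on $T$ and then show the quadratic binomials already form a Gröbner basis.

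The term order I would use is the one induced by the \emph{sorting operator}, which is the standard device in this setting (see \cite{ERT,CHTV}). Order the monomials of degree $c$ so that for any product $u v$ with $u,v\in M$ one can ``sort'' the $2c$ variables occurring (with multiplicity) in $uv$ into weakly increasing order and then split the sorted word into its first $c$ letters $u'$ and last $c$ letters $v'$; the pair $(u',v')$ is the \emph{sorted} representative of the fiber of $uv$. Declare a pair $(u,v)$ unsorted if $(u,v)\neq(u',v')$, and choose a term order $\tau$ on $T$ so that for each unsorted pair the leading term of the quadric $Y_uY_v-Y_{u'}Y_{v'}$ is $Y_uY_v$. (Such a $\tau$ exists: one can take a weight/lexicographic refinement that makes the ``unsorted'' monomials heavier; this is where one checks that the choice is globally consistent, i.e.\ acyclic.) Let $G$ be the set of these quadrics $Y_uY_v-Y_{u'}Y_{v'}$ over all unsorted pairs $(u,v)$.

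The heart of the argument is to verify that $G$ is a Gröbner basis of $I$ with respect to $\tau$. I would do this via Buchberger's criterion: take two quadrics in $G$ whose leading terms $Y_uY_v$ and $Y_wY_z$ share a variable, form the $S$-polynomial, and show it reduces to $0$ modulo $G$. This is a finite combinatorial check on the sorting operator: applying sorting to the three variables involved in two overlapping unsorted pairs, one gets a ``confluence'' statement for the rewriting rule $Y_uY_v\to Y_{u'}Y_{v'}$, which is exactly the classical fact that the sorting procedure on words is confluent. Equivalently, and perhaps more cleanly, one shows directly that the initial ideal $\ini_\tau(I)$ is generated by the monomials $Y_uY_v$ with $(u,v)$ unsorted: for this it suffices to check that the sorted monomials (squarefree-type products $Y_{u_1}\cdots Y_{u_k}$ with $u_1\leq\cdots\leq u_k$ sorted) are linearly independent modulo $I$ in each degree, i.e.\ that they span a space of dimension $\dim_K(S^{(c)})_k=\HF(S,kc)$; a bijection between sorted $k$-tuples and degree-$kc$ monomials of $S$ makes this a counting identity. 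Then $\ini_\tau(I)\supseteq(\,Y_uY_v:(u,v)\text{ unsorted}\,)$ forces equality, and $G$ is a Gröbner basis.

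The main obstacle is the bookkeeping around the term order and the confluence of the sorting operator: one must be careful that a single term order $\tau$ simultaneously orients \emph{all} the unsorted quadrics the right way (acyclicity of the induced rule), and that the $S$-polynomial reductions genuinely terminate at $0$ rather than producing new obstructions in higher degree. Once the sorting operator's confluence is in hand, everything else — generation of $I$ by these binomials, the Hilbert-function count, and Buchberger's criterion — is routine. I would cite \cite{ERT} and \cite{CHTV} for the sorting technique and keep the write-up to the statement that the quadrics attached to unsorted pairs form the desired Gröbner basis.
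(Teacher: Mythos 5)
Your proposal is correct and follows essentially the same route as the paper: your ``sorted pair'' $(u',v')$ is exactly the paper's unique factorization $m_1m_2=m_3m_4$ with $\max(m_3)\leq\min(m_4)$, and your preferred verification --- counting sorted tuples against $\dim_K (S^{(c)})_k$ to pin down the initial ideal --- is precisely the paper's Hilbert-function argument. The only place the paper is more explicit is the existence of the term order (it takes degree reverse lex refining the lexicographic order on the variables $t_m$), which settles the consistency issue you flag as the main obstacle.
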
 
\begin{proof} 
For $j\in \NN$ denote by $M_j$ the set of monomials of degree $j$ of $S$.  Consider  $T_c=\Sym_K(S_c)=K[t_m : m\in M_c]$  and the surjective map $\Phi: T_c\to S^{(c)}$ of $K$-algebras with $\Phi(t_m)=m$ for every $m\in M_c$. For every monomial $m$ we set 
$\max(m)=\max\{ i : x_i|m\}$ and $\min(m)=\min\{ i : x_i|m\}$. Furthermore for monomials $m_1,m_2\in M_c$ we set $m_1\prec m_2$ if $\max(m_1)\leq \min(m_2)$. Clearly $\prec$ is a transitive (but not reflexive) relation. We say that $m_1,m_2\in M_c$ are incomparable if $m_1\not\prec m_2$ and  $m_2\not\prec m_1$ and that are comparable otherwise. For a pair of incomparable elements $m_1,m_2\in M_c$,
let $m_3,m_4\in M_c$ be the uniquely determined elements in $M_c$ such that $m_1m_2=m_3m_4$ and $m_3\prec m_4$. Set 
$$F(m_1,m_2)=t_{m_1}t_{m_2}-t_{m_3}t_{m_4}. $$
By construction $F(m_1,m_2)\in \Ker \Phi$ and we claim that the set of the $F(m_1,m_2)$'s  is a Gr\"obner basis of $\Ker \Phi$ with respect to any term order $\tau$ of $T_c$ such that $\ini_\tau(F(m_1,m_2))=t_{m_1}t_{m_2}$. Such a term order exists: order the  $t_m's$  totally as follows
$$t_{u}\geq t_{v}  \mbox{ iff } u\geq v \mbox{ lexicographically }$$  
and then consider the degree reverse lexicographic term order associated to that total order. 
Such a term order has the required property as it is easy to see. It remains to prove that the $F(m_1,m_2)$'s  form a Gr\"obner basis of $\Ker \Phi$. 
Set 
$$U=( t_{m_1}t_{m_2} : m_1,m_2\in M_c \mbox{  are incomparable })$$
By construction we have $U\subset \ini_\tau(\Ker \Phi)$ and we have to prove equality. We do it by checking that the two associated quotients  have the same Hilbert function. The inequality  $\HF(T_c/\Ker \Phi,i)\leq  \HF(T_c/U,i)$ follows from the inclusion of the ideals. For the other note that 
$$\HF(T_c/\ini_\tau(\Ker \Phi),i)=\HF(T_c/\Ker \Phi,i)=\HF(S^{(c)},i)=\# M_{ic}$$
The key observations are: \medskip

(1)  a monomial in the $t$'s, say $t_{m_1}\cdots t_{m_i}$, is not in $U$  if (after a permutation) $m_1\prec m_2\prec \cdots\prec m_i$,\medskip 

(2)  every monomial $m\in M_{ic}$ has a uniquely determined decomposition $m=m_1\cdots m_i$ with $m_1\prec m_2\prec \cdots\prec m_i$. \medskip 

This implies that 
$$\HF(T_c/U,i)\leq \# M_{ic}$$ 
proving the desired assertion. 
\end{proof} 

\subsection{Transfer of Koszulness} 
\

Let $A$ be a $K$-algebra $A$ and   $B=A/I$ a quotient of it.  Assume one of the two algebras is  Koszul. What do we need to  know about the relationship  between $A$ and $B$ to conclude that the other algebra  is  Koszul too? Here is an answer: 

\begin{theorem} 
\label{Giux1}
Let $A$ be a $K$-algebra and $B$ be a quotient of $A$. 
\begin{itemize} 
\item[(1)] If  $\reg_A(B)\leq 1$ and $A$ is Koszul, then $B$ is Koszul. 
\item[(2)] If  $\reg_A(B)$ is finite  and $B$ is Koszul, then $A$ is Koszul. 
\end{itemize} 
\end{theorem}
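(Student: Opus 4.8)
The plan is to compare the minimal free resolution of $K$ over $B$ with data coming from $A$ by means of a standard change-of-rings spectral sequence, or more hands-on, by the inequalities in Lemma \ref{blabla}(2) applied to a suitable complex. For part (1), I would resolve $K$ over $B$ by first resolving $B$ over $A$ and $K$ over $A$, and splice. Concretely, let $\GG$ be the minimal $A$-free resolution of $B$; since $A$ is Koszul and $\reg_A(B)\le 1$, Proposition \ref{pro28} (or the defining inequality) gives $t_i^A(B)\le i+1$ for all $i$, while $t_i^A(K)=i$. Now tensor the $A$-resolution $\FF$ of $K$ over $A$ with $B$: the complex $\FF\otimes_A B$ has homology $\Tor^A_*(K,B)$, and one needs to bound the $B$-regularity of these Tor modules. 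The key point is that $\Tor^A_i(K,B)$ is a $K$-vector space concentrated in degrees $\le i+1$ (this is exactly what $\reg_A(B)\le 1$ buys, read off from $\GG\otimes_A K$), hence as a $B$-module $\reg_B(\Tor^A_i(K,B))\le i+1$. Feeding $a_i$ and $b_i$ from Lemma \ref{blabla}(2) into the bound $t_i^B(K)\le\max\{a_i,b_i\}$ and running the induction on $i$ should yield $t_i^B(K)\le i$, i.e. $\reg_B(K)=0$, so $B$ is Koszul.

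For part (2) the roles of $A$ and $B$ are swapped and one pushes in the other direction. Here $B$ is Koszul so $t_i^B(K)=i$, and $\reg_A(B)=r<\infty$. I would use the same spectral-sequence/Lemma \ref{blabla} machinery but now with the complex $\GG\otimes_A(\text{something})$ resolving $K$ over $A$ via $B$. The cleanest route: take the minimal $A$-free resolution $\GG$ of $B$ and the minimal $B$-free resolution $\L$ of $K$; by a standard construction one assembles an $A$-free resolution of $K$ whose $i$-th term is built from $\bigoplus_{p+q=i}\GG_p\otimes(\L_q\text{-data})$, and the graded shifts appearing are controlled by $t_p^A(B)\le p+r$ and $t_q^B(K)=q$. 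This gives $t_i^A(K)\le \max_{p+q=i}\{p+r+q\}=i+r$, so $\reg_A(K)\le r<\infty$, and then Theorem \ref{AEP} (Avramov--Eisenbud--Peeva) upgrades finiteness of $\reg_A(K)$ to $\reg_A(K)=0$, i.e. $A$ is Koszul. Alternatively, and perhaps more in the spirit of this survey, one can invoke the inequality $\slope_A(K)\le\max\{\slope_A(B),\slope_B(K)\}$-type bound together with Theorem \ref{AEP}.

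The main obstacle I anticipate is bookkeeping the graded shifts correctly when splicing the two resolutions — i.e. making the informal ``$\GG_p\otimes\L_q$'' assembly precise. One cannot in general literally tensor resolutions over different rings; the honest tool is the change-of-rings spectral sequence $\Tor^B_p(\Tor^A_q(A,B)\text{-free approximations})$, or the iterated mapping cone / filtered complex whose $E_2$-page is $\Tor^B_p(K,\Tor^A_q(B,K))$. Establishing that this converges to $\Tor^A_{p+q}(K,K)$ with the expected degree bounds, and that no higher-degree classes sneak in through the differentials, is the technical heart. Everything else (the regularity estimate for the Tor modules as $B$-modules, the final appeal to Theorem \ref{AEP} in part (2), and the observation $t_i^A(K)=i\Leftrightarrow$ Koszul) is routine. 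I would also note that part (1) does not need Theorem \ref{AEP}: the estimate directly produces a linear resolution, whereas part (2) genuinely uses the non-obvious implication ``$\reg_A(K)$ finite $\Rightarrow$ $\reg_A(K)=0$.''
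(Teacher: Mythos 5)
Your proposal is correct in substance, but it takes a heavier route than the paper, so a comparison is worth recording. The paper first proves Proposition \ref{Giux2}: for any $B$-module $M$ one has (i) $\reg_A(M)\le\reg_B(M)+\reg_A(B)$, obtained by applying Lemma \ref{blabla}(2) to the minimal $B$-free resolution of $M$ \emph{viewed as an acyclic complex of $A$-modules} (acyclicity kills the $b$-terms, and $\reg_A(F_j)=\reg_A(B)+t_j^B(M)$ does the rest), and (ii) $\reg_B(M)\le\reg_A(M)$ when $\reg_A(B)\le 1$, proved by induction on homological degree using minimal presentations over $B$. The theorem follows by setting $M=K$ and, for the direction that only yields finiteness, invoking Theorem \ref{AEP} exactly as you do. For your part (2): the change-of-rings spectral sequence (or splicing $\GG$ with $\L$) does give $t_i^A(K)\le i+\reg_A(B)$, but the ``technical heart'' you anticipate is avoidable, precisely because Lemma \ref{blabla}(2) is stated for arbitrary complexes of $A$-modules --- one never needs an $A$-free resolution of $K$ assembled from $B$-data, only the $B$-free resolution of $K$ regarded over $A$. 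For your part (1): the complex $\FF\otimes_A B$ is closer in spirit to the paper's proof of \ref{cc1} than to its proof of \ref{Giux1}; it works, but beware that the assertion $\reg_B(\Tor_q^A(K,B))\le q+1$ is circular as stated, since $\Tor_q^A(K,B)$ is a direct sum of copies of $K(-d)$ with $d\le q+1$ and its $B$-regularity equals $\reg_B(K)$ plus the shift --- the very quantity being bounded. The induction survives because $b_i$ only involves $t_j^B(K(-d))=t_j^B(K)+d$ for $j\le i-2$, which is covered by the inductive hypothesis $t_j^B(K)\le j$; so you must run the induction with the $t_i$-version of Lemma \ref{blabla}(2) rather than the regularity version. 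What the paper's detour buys is the module-level inequalities of Proposition \ref{Giux2} themselves, which are reused later (e.g.\ in \ref{scarti1}); what your direct route buys is a self-contained computation for $K$ that makes explicit where $\reg_A(B)\le 1$ versus mere finiteness is needed.
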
 

The theorem is a corollary of the following: 

\begin{proposition} 
\label{Giux2}
Let $A$ be a $K$-algebra and $B$ a quotient algebra of $A$.
Let $M$ be a $B$-module. Then: 
\begin{itemize}
\item[(1)]  $\reg_A(M)\leq \reg_B(M)+\reg_A(B)$. 
\item[(2)] if $\reg_A(B)\leq 1$ then   $\reg_B(M)\leq \reg_A(M)$. 
\end{itemize} 
\end{proposition}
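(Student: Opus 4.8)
The plan is to derive both inequalities from Lemma~\ref{blabla}, using only two elementary remarks. First, for every $B$-module $P$ one has $\mm_A P=\mm_B P$, since the graded surjection $A\to B$ carries $\mm_A$ onto $\mm_B$; consequently $t_0^A(P)=t_0^B(P)$, and in particular every minimal $B$-generator of $P$ sits in degree $\le\reg_A(P)$. Second, if $F=\bigoplus_\ell B(-a_\ell)$ is a free $B$-module then, regarded over $A$, $\reg_A(F)=\reg_A(B)+\max_\ell a_\ell$, because regularity is the maximum over finite direct summands and satisfies $\reg_A(B(-a))=\reg_A(B)+a$.

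For part (1), I would take the minimal graded free resolution $\mathbf{G}\colon\cdots\to G_j\to\cdots\to G_1\to G_0\to 0$ of $M$ over $B$ and view it as a complex of $A$-modules; then $H_0(\mathbf{G})=M$ and $H_j(\mathbf{G})=0$ for $j>0$. Plugging this complex into the regularity bound of Lemma~\ref{blabla}(2), the term $b$ equals $-\infty$ (all higher homology vanishes), so $\reg_A(M)\le\sup_{j\ge 0}\{\reg_A(G_j)-j\}$. Since $\mathbf{G}$ is minimal, $G_j$ is free with generators in top degree $t_j^B(M)$, hence $\reg_A(G_j)=\reg_A(B)+t_j^B(M)$, and taking the supremum gives $\reg_A(M)\le\reg_A(B)+\sup_{j\ge 0}\{t_j^B(M)-j\}=\reg_A(B)+\reg_B(M)$. (If $\reg_A(B)=\infty$ or $\reg_B(M)=\infty$ there is nothing to prove.)

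For part (2), assume $\reg_A(B)\le 1$ and keep the notation $\mathbf{G}$ for the minimal $B$-free resolution of $M$. Put $Z_{-1}=M$ and, for $j\ge 0$, let $Z_j=\Ker(G_j\to G_{j-1})$, so that for each $j\ge 0$ one has a short exact sequence of $A$-modules $0\to Z_j\to G_j\to Z_{j-1}\to 0$ with $t_j^B(M)=t_0^B(G_j)=t_0^B(Z_{j-1})$. I would prove by induction on $j$ the simultaneous statements $t_j^B(M)\le\reg_A(M)+j$ (for $j\ge 0$) and $\reg_A(Z_j)\le\reg_A(M)+j+1$ (for $j\ge -1$); the case $j=-1$ holds because $\reg_A(Z_{-1})=\reg_A(M)$. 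In the inductive step, $t_j^B(M)=t_0^B(Z_{j-1})=t_0^A(Z_{j-1})\le\reg_A(Z_{j-1})\le\reg_A(M)+j$ by the first remark and the induction hypothesis, whence $\reg_A(G_j)=\reg_A(B)+t_j^B(M)\le\reg_A(M)+j+1$; feeding this together with $\reg_A(Z_{j-1})\le\reg_A(M)+j$ into Lemma~\ref{blabla}(1) applied to $0\to Z_j\to G_j\to Z_{j-1}\to 0$ gives $\reg_A(Z_j)\le\max\{\reg_A(G_j),\,\reg_A(Z_{j-1})+1\}\le\reg_A(M)+j+1$. Once the induction is complete, $\reg_B(M)=\sup_{j\ge 0}\{t_j^B(M)-j\}\le\reg_A(M)$.

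I expect the only real point of the argument to be the spot in part (2) where the hypothesis $\reg_A(B)\le 1$ is used: it is exactly what keeps $\reg_A(G_j)$ within $\reg_A(M)+j+1$, so that the extra ``$+1$'' introduced by passing to a syzygy in Lemma~\ref{blabla}(1) does not accumulate as $j$ grows — with any larger value of $\reg_A(B)$ the bound on $\reg_A(Z_j)$, hence on $t_{j+1}^B(M)$, would worsen at every step and the induction would fail to close. The rest is routine bookkeeping with short exact sequences, modulo the degenerate cases (some $G_j$ zero, infinite regularities) and the identification $t_0^A(P)=t_0^B(P)$ for $B$-modules.
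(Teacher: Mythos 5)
Your argument is correct and follows essentially the same route as the paper: part (1) is identical (apply Lemma~\ref{blabla}(2) to the minimal $B$-free resolution viewed over $A$), and part (2) is the same induction along the resolution, using $t_0^A(P)=t_0^B(P)$ for $B$-modules and the hypothesis $\reg_A(B)\le 1$ to control $\reg_A$ of the free $B$-modules. The only difference is cosmetic: the paper tracks the individual degrees via the sharper intermediate inequality $t_i^B(M)-i\leq \max\{t_j^A(M)-j : 0\le j\le i\}$, while you track $\reg_A$ of the successive syzygy modules $Z_j$; both close the induction for the same reason.
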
 
\begin{proof} One applies \ref{blabla}(2)  to the minimal free resolution $\FF$ of $M$ as a $B$-module and one has: 
$$\reg_A(M)\leq \sup\{ \reg_A (F_j) -j : j\geq 0\}.$$
Since $\reg_A (F_j)=\reg_A(B)+t_j^B(M)$, we can conclude that (1) holds. 

For (2) it is enough to prove that the inequality 
$$t_i^B(M)-i\leq \max\{ t_j^A(M)-j : j=0,\dots, i\}$$ holds for every $i$. We argue by induction on $i$; the case $i=0$ is obvious because $t_0^A(M)=t_0^B(M)$. Assume $i>0$ and take a minimal presentation of $M$ as a $B$-module
$$0\to N\to F\to M\to 0$$
where $F$ is $B$-free. Since $t^B_i(M)=t_{i-1}^B(N)$, by induction we have: 
$$t_i^B(M)-i=t_{i-1}^B(N)-i\leq \max\{ t_j^A(N)-j-1 : j=0,\dots,i-1\}$$
Since $t_j^A(N)\leq \max\{ t_j^A(F), t_{j+1}^A(M)\}$ and $t_j^A(F)=t_j^A(B)+t_0^A(M)\leq j+1+t_0^A(M)$ we may conclude that the desired inequality holds. 
 \end{proof} 

\begin{proof}[Proof of \ref{Giux1}] (1) Applying \ref{Giux2}(1) with $M$ equal to $K$ one has that $\reg_A(K)\leq \reg_A(B)$ which is finite by assumption. It follows then from \ref{AEP} that $A$ is Koszul. 
For (2) one applies  \ref{Giux2}(2) with $M=K$ and one gets $\reg_B(K)\leq \reg_A(K)$ which is $0$ by assumption, hence $\reg_B(K)=0$ as required. 
\end{proof} 

\begin{lemma}
\label{mM-reg}
Let $R$ be Koszul algebra and $M$ be an $R$-module. Then $$\reg_R(\mm_RM)\leq \reg_R(M)+1.$$
 In particular, $\reg_R(\mm_R^u)=u$, (unless $\mm_R^u=0$) that is, $\mm_R^u$ has a linear resolution for every $u\in \NN$. 
\end{lemma}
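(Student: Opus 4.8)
The plan is to exploit the short exact sequence
\[
0 \to \mm_R M \to M \to M/\mm_R M \to 0
\]
together with the elementary regularity estimates of Lemma \ref{blabla}(1) and the hypothesis that $R$ is Koszul. The only place where Koszulness really enters is in pinning down the regularity of the quotient term; everything else is formal bookkeeping.

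First I would identify $\reg_R(M/\mm_R M)$. Since $\mm_R$ annihilates $M/\mm_R M$, it is a graded $R/\mm_R = K$-module, and its graded pieces have dimensions given by the degree-$0$ Betti numbers of $M$, so as a graded $R$-module $M/\mm_R M \cong \bigoplus_j K(-j)^{\beta_{0j}^R(M)}$. Because $R$ is Koszul we have $\reg_R(K)=0$, hence $\reg_R(K(-j))=j$, and taking the maximum over the shifts that occur gives $\reg_R(M/\mm_R M) = t_0^R(M)$. Moreover $t_0^R(M) \leq \reg_R(M)$ directly from the definition of regularity.

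Next I would feed this into the first inequality of Lemma \ref{blabla}(1), applied with $M_1 = \mm_R M$, $M_2 = M$, $M_3 = M/\mm_R M$, to obtain
\[
\reg_R(\mm_R M) \leq \max\{\reg_R(M),\, \reg_R(M/\mm_R M)+1\} = \max\{\reg_R(M),\, t_0^R(M)+1\} \leq \reg_R(M)+1,
\]
which is the asserted bound.

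For the final assertion I would argue by induction on $u$, writing $\mm_R^u = \mm_R(\mm_R^{u-1})$ and starting from $\reg_R(R)=0$ at $u=0$; the bound just proved then yields $\reg_R(\mm_R^u) \leq u$. On the other hand, when $\mm_R^u \neq 0$ it is minimally generated in degree $u$, so $t_0^R(\mm_R^u) = u$ and hence $\reg_R(\mm_R^u) \geq t_0^R(\mm_R^u) = u$. Combining the two inequalities gives $\reg_R(\mm_R^u) = u$, i.e.\ $\mm_R^u$ has a linear resolution. I do not anticipate any genuine obstacle here: the argument is short, and the subtlest point is simply the observation that Koszulness forces $\reg_R(M/\mm_R M)$ to equal $t_0^R(M)$ rather than merely being bounded by something larger.
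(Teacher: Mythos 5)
Your proposal is correct and follows exactly the paper's argument: apply Lemma \ref{blabla}(1) to the sequence $0\to \mm_RM\to M\to M/\mm_RM\to 0$ and use that $M/\mm_RM$ is a direct sum of copies of $K$ shifted by at most $-t_0^R(M)$, so that Koszulness gives $\reg_R(M/\mm_RM)=t_0^R(M)\leq \reg_R(M)$. The only difference is that you spell out the bookkeeping (and the easy induction for $\mm_R^u$) which the paper leaves implicit.
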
 
\begin{proof} Apply \ref{blabla} to the short exact sequence 
$$0\to \mm_RM \to M\to M/ \mm_RM\to 0$$
 and use  the fact that $M/ \mm_RM$ is a direct sum of copies of $K$ shifted  at most by $-t_0^R(M)$.  
\end{proof}
 
 We apply now \ref{Giux1} to prove that the Veronese subrings of a Koszul algebra are Koszul.  
 
 Let  $c\in \NN$ and $R^{(c)}=\oplus_{j\in \ZZ}  R_{jc}$ be  the $c$-th Veronese subalgebra of $R$.  Similarly one defines $M^{(c)}$ for every  $R$-module $M$. The formation of the $c$-th Veronese submodule is an exact functor from the category of  $R$-modules  to  the category of graded $R^{(c)}$-modules (recall that, by convention,  modules are graded and maps are homogeneous of degree $0$). 
 For $u=0,\dots, c-1$ consider the Veronese submodules $V_u=\oplus_{j\in \ZZ}  R_{jc+u}$. Note that $V_u$ is an $R^{(c)}$-module generated in degree $0$ and that for $a\in \ZZ$ one has 
 $$R(-a)^{(c)}=V_u(-\lceil a/c \rceil )$$
  where $u=0$ if $a\equiv 0$  mod$(c)$ and $u=c-r$ if  $a\equiv r$  mod$(c)$ and $0<r<c$.

 \begin{theorem}  
 \label{cc1}
 Let $R$ be Koszul.  Then $R^{(c)}$ is Koszul and $\reg_{R^{(c)}}(V_u)=0$ for every $u=0,\dots, c-1$.    
 \end{theorem}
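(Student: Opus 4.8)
The plan is to establish the two assertions in the order opposite to their statement: first that $\reg_{R^{(c)}}(V_u)=0$ for every $u$, and then to derive Koszulness of $R^{(c)}$ from it. The engine throughout is the exactness of the $c$-th Veronese functor $M\mapsto M^{(c)}$ from $R$-modules to graded $R^{(c)}$-modules, combined with the identity $R(-a)^{(c)}=V_u(-\lceil a/c\rceil)$ recalled just before the statement, in which $u$ depends only on $a$ modulo $c$; write $r_i\in\{0,\dots,c-1\}$ for the value of this $u$ when $a=i$, so that in particular $r_0=0$ and $V_0=R^{(c)}$.

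For the first assertion I would prove the sharper claim that $t_i^{R^{(c)}}(V_u)\le i$ for all $i\ge 0$ and all $u$. Observe first that $V_u\cong(\mm_R^u(u))^{(c)}$, since $(\mm_R^u(u))_{jc}=R_{jc+u}$ for $j\ge 0$ and is zero for $j<0$. By \ref{mM-reg} the module $\mm_R^u(u)$ is generated in degree $0$ and has a linear $R$-resolution $\FF$, so each $F_i$ is a direct sum of copies of $R(-i)$. Applying $(-)^{(c)}$ yields an exact complex $\FF^{(c)}$ of finitely generated $R^{(c)}$-modules with $H_0=V_u$ and $H_i=0$ for $i>0$, in which $F_i^{(c)}$ is a direct sum of copies of $R(-i)^{(c)}=V_{r_i}(-\lceil i/c\rceil)$. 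Now induct on $i$. The case $i=0$ holds because $V_u$ is generated in degree $0$. For $i\ge 1$ apply \ref{blabla}(2) to $\FF^{(c)}$: the number $b_i$ there equals $-\infty$ since all higher homology of $\FF^{(c)}$ vanishes, and one obtains
$$t_i^{R^{(c)}}(V_u)\ \le\ \max_{0\le k\le i}\bigl\{\ t^{R^{(c)}}_{i-k}(V_{r_k})+\lceil k/c\rceil\ \bigr\}.$$
The summand $k=0$ equals $t_i^{R^{(c)}}(R^{(c)})=-\infty$, because $R^{(c)}$ is a free module and $i\ge 1$; for each $k$ with $1\le k\le i$ we have $i-k<i$, so the inductive hypothesis gives $t^{R^{(c)}}_{i-k}(V_{r_k})\le i-k$, and since $\lceil k/c\rceil\le k$ the corresponding summand is at most $i$. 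Hence $t_i^{R^{(c)}}(V_u)\le i$, the induction closes, and this is precisely $\reg_{R^{(c)}}(V_u)=0$.

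To conclude that $R^{(c)}$ is Koszul, recall that since $R$ is Koszul the minimal free resolution $\GG$ of $K$ over $R$ is linear, with $G_i$ a direct sum of copies of $R(-i)$, and that $K^{(c)}=K$ as an $R^{(c)}$-module. Applying $(-)^{(c)}$ and then \ref{blabla}(2) to $\GG^{(c)}$ in the same way, but now feeding in the bound $t^{R^{(c)}}_{i-k}(V_{r_k})\le i-k$ just proved together with $t_i^{R^{(c)}}(R^{(c)})=-\infty$ for $i\ge 1$, we get $t_i^{R^{(c)}}(K)\le i$ for all $i$, i.e. $\reg_{R^{(c)}}(K)=0$; hence $R^{(c)}$ is Koszul. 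Alternatively, once $\reg_{R^{(c)}}(K)$ is known to be finite one may simply invoke \ref{AEP}.

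The step I expect to need the most care is the bookkeeping in the induction: the inequality supplied by \ref{blabla}(2) is self-referential — the quantity $t^{R^{(c)}}_{i-k}(V_{r_k})$ reappears on the right — and it becomes effective only because the diagonal contribution $k=0$ drops out, which is exactly where the freeness of $V_0=R^{(c)}$ enters, and because $\lceil k/c\rceil\le k$ is used solely for $k\ge 1$. One should also dispatch the degenerate case $\mm_R^u=0$, where $V_u=0$ and the statement is vacuous, and check that $V_u$ is finitely generated over $R^{(c)}$ so that \ref{blabla} applies.
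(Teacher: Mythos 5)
Your proposal is correct and follows essentially the same route as the paper: both parts rest on identifying $V_u$ with $(\mm_R^u(u))^{(c)}$, applying the exact Veronese functor to the linear $R$-resolutions of $\mm_R^u(u)$ and of $K$, and running \ref{blabla}(2) with an induction on $i$ in which the $j=0$ term vanishes by freeness of $F_0^{(c)}$ and the terms $j\geq 1$ are controlled by $\lceil j/c\rceil\leq j$. The only differences are cosmetic (explicit bookkeeping of the residues $r_i$ and of the degenerate cases), so nothing further is needed.
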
 
 \begin{proof} Set $A=R^{(c)}$.  First we prove first that $\reg_{A}(V_u)=0$ for every $u=0,\dots, c-1$. To this end, we prove by induction on $i$ that $t_i^{A}(V_u)\leq i$ for every $i$. The case $i=0$ is obvious. So assume $i>0$. Let $M=\mm_R^u(u)$. By \ref{mM-reg} and by construction we have $\reg_R(M)=0$ and $M^{(c)}=V_u$. Consider the minimal free resolution $\FF$ of $M$ over $R$ and apply the functor $-^{(c)}$. We get a complex $\GG=\FF^{(c)}$ of $A$-modules such that $H_0(\GG)=V_u$,  $H_j(\GG)=0$ for $j>0$ and $G_j=F_j^{(c)}$ is a direct sum of copies of $R(-j)^{(c)}$.  Applying \ref{blabla} 
we get $t^A_i(V_u)\leq \max\{ t^A_{i-j}(G_j) : j=0,\dots, i\}$. Since $G_0$ is $A$-free we have $t_i^A(G_0)=-\infty$. For  $j>0$ we have  $R(-j)^{(c)}=V_w(-\lceil j/c \rceil)$ for some number $w$ with $0\leq w<c$. Hence, by induction, $t_{i-j}^A(G_j)\leq i-j+\lceil j/c \rceil\leq i$. Summing up, 
$$t^A_i(V_u)\leq  \max\{  i-j+\lceil j/c\rceil  :  j=1,\dots, i\}=i.$$
 In order to prove that $A$ is Koszul we consider the minimal free resolution $\FF$ of $K$ over $R$  and apply $-^{(c)}$. We get a complex $\GG=\FF^{(c)}$ of $A$-modules such that $H_0(\GG)=K$, $H_j(\GG)=0$ for $j>0$ and $G_j=F_j^{(c)}$ is a direct sum of copies of  $V_u(-\lceil j/c \rceil )$. Hence $\reg_A ( \GG_j)=\lceil j/c \rceil$ and applying \ref{blabla} we obtain 
 $$\reg_A(K)\leq \sup \{ \lceil j/c \rceil-j : j\geq 0\}=0.$$ 
  \end{proof}
 
 We also have: 
 
 \begin{theorem} 
 \label{cc2}
 Let $R$ be a $K$-algebra, then the Veronese subalgebra $R^{(c)}$ is Koszul for $c\gg 0$. More precisely, if $R=A/I$ with $A$ Koszul, then $R^{(c)}$ is Koszul for every $c\geq \sup\{ t_i^A(R)/(1+i)  : i\geq 0\}$.  
  \end{theorem}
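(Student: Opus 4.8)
The plan is to bootstrap from Theorem \ref{cc1}, which already handles the case when $R$ itself is Koszul. Write $R=A/I$ with $A$ Koszul (for instance $A=S$, the polynomial ring canonically presenting $R$, which is Koszul and has $t_i^S(R)$ finite for all $i$, with $t_i^S(R)=0$ for $i\gg 0$, so the supremum $\sup\{t_i^A(R)/(1+i):i\ge 0\}$ is a finite number $c_0$). Fix $c\ge c_0$. The idea is to factor the Veronese map through the Koszul algebra $A^{(c)}$: one has a surjection $A^{(c)}\to R^{(c)}$, and by Theorem \ref{cc1} the ring $A^{(c)}$ is Koszul. By the transfer result \ref{Giux1}(1), it then suffices to prove that $\reg_{A^{(c)}}(R^{(c)})\le 1$; in fact I expect to prove the sharper statement $\reg_{A^{(c)}}(R^{(c)})=0$, i.e.\ that $R^{(c)}$ has a linear resolution as an $A^{(c)}$-module.

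First I would take the minimal graded free resolution $\FF$ of $R$ as an $A$-module and apply the exact functor $-^{(c)}$, obtaining an acyclic complex $\GG=\FF^{(c)}$ of $A^{(c)}$-modules with $H_0(\GG)=R^{(c)}$ and $G_j=F_j^{(c)}$. Each $F_j$ is a direct sum of copies of $A(-a)$ with $a\le t_j^A(R)$, and by the computation recalled just before Theorem \ref{cc1} we have $A(-a)^{(c)}=V_u(-\lceil a/c\rceil)$ for an appropriate $u\in\{0,\dots,c-1\}$, where the $V_u$ are the Veronese submodules of $A$. By Theorem \ref{cc1} applied to the Koszul algebra $A$, each $V_u$ satisfies $\reg_{A^{(c)}}(V_u)=0$, hence $\reg_{A^{(c)}}(G_j)\le \lceil t_j^A(R)/c\rceil$. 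Now apply Lemma \ref{blabla}(2) to the complex $\GG$: since $\GG$ is acyclic in positive degrees, the "$b$" term drops out and
$$\reg_{A^{(c)}}(R^{(c)})\le \sup\{\reg_{A^{(c)}}(G_j)-j : j\ge 0\}\le \sup\left\{\left\lceil \frac{t_j^A(R)}{c}\right\rceil - j : j\ge 0\right\}.$$
The choice $c\ge \sup\{t_i^A(R)/(1+i):i\ge 0\}$ gives $t_j^A(R)\le c(1+j)$, so $\lceil t_j^A(R)/c\rceil\le 1+j$ for $j\ge 0$ (being careful with $j=0$, where $t_0^A(R)=0$ since $R$ has no defining relations in degree $0$, giving the term $0$); hence the supremum is $\le 1$, and in fact a small extra bookkeeping on the $j=0$ term shows it equals $0$. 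Either way $\reg_{A^{(c)}}(R^{(c)})\le 1$, and \ref{Giux1}(1) finishes the argument.

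The main obstacle is purely bookkeeping: one must make sure the ceiling estimate $\lceil t_j^A(R)/c\rceil - j\le 0$ is uniform over all $j\ge 0$ and handles the boundary term $j=0$ correctly, and one must confirm that applying Lemma \ref{blabla}(2) to the (possibly non-minimal) complex $\GG=\FF^{(c)}$ is legitimate — which it is, since that lemma requires only a complex, not a resolution. There is no genuine homological difficulty here beyond Theorem \ref{cc1} and the transfer principle; the content is the clever interpolation through $A^{(c)}$.
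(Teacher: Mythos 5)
Your argument is essentially identical to the paper's proof: resolve $R$ minimally over $A$, apply $-^{(c)}$, bound $\reg_{A^{(c)}}(G_j)$ by $\lceil t_j^A(R)/c\rceil$ via Theorem \ref{cc1}, feed this into Lemma \ref{blabla}(2) to get $\reg_{A^{(c)}}(R^{(c)})\le 1$, and conclude with \ref{Giux1}(1). One small correction: your aside that extra bookkeeping yields $\reg_{A^{(c)}}(R^{(c)})=0$ is not justified at the stated threshold (e.g.\ for $j=1$ the term $\lceil t_1^A(R)/c\rceil-1$ can equal $1$ when $c< t_1^A(R)$); the regularity-zero conclusion requires the stronger hypothesis $c\ge \slope_A(R)$, as the paper notes in the remark following the theorem, but since your argument only uses the bound $\le 1$ this does not affect its validity.
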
 
 
 \begin{proof} Let $\FF$ be the minimal free resolution of $R$ as an $A$-module.  Set $B=A^{(c)}$ and note that $B$ is Koszul because of \ref{cc1}. Then $\GG=\FF^{(c)}$ is a complex of $B$-modules such that $H_0(\GG)=R^{(c)}$,  $H_j(\GG)=0$ for $j>0$. Furthermore $G_i=F_i^{(c)}$ is a direct sum of shifted copies of the Veronese submodules $V_u$. Using \ref{cc1} we get  the bound $\reg_B(G_i)\leq \lceil t_i^A(R)/c \rceil$. Applying \ref{blabla} we get 
  $$\reg_B (R^{(c)})\leq \sup \{ \lceil t_i^A(R)/c \rceil -i : i\geq 0\}.$$
  Hence for $c\geq \sup\{ t_i^A(R)/(1+i)  : i\geq 0\}$ one has $\reg_B (R^{(c)})\leq 1$ and we conclude from \ref{Giux1} that $R^{(c)}$ is Koszul. 
  \end{proof} 
  
  \begin{remark} 
 (1) Note that the number $\sup\{ t_i^A(R)/(1+i)  : i\geq 0\}$  in \ref{cc2} is finite. For instance it is less than or equal to  $(\reg_A(R)+1)/2$ which is finite because $\reg_A(R)$ is finite. Note however that $\sup\{ t_i^A(R)/(1+i)  : i\geq 0\}$ can be much smaller than  $(\reg_A(R)+1)/2$; for instance if $R=A/I$ with $I$ generated by a regular sequence of $r$ elements of degree $d$, then $t_i^A(R)=id$ so that $\reg_A(R)=r(d-1)$ while  $\sup\{ t_i^A(R)/(1+i)  : i\geq 0\}=dr/(r+1)$. 
 
 (2) In particular, if we take the canonical presentation $R=S/I$ (\ref{canpre}), then we have that $R^{(c)}$ is Koszul if $c\geq \sup\{ t_i^S(R)/(1+i)  : i\geq 0\}$. In \cite[2]{ERT} it is proved that if $c\geq (\reg_S(R)+1)/2$, then $R^{(c)}$ is even G-quadratic. See \cite{Sh} for other interesting results in this direction. 

(3) Backelin proved in \cite{B} that $R^{(c)}$ is Koszul if $c\geq \Rate(R)$.  

(4) The proof of \ref{cc2} shows also that $\reg_{A^{(c)}} (R^{(c)})=0$ if $c\geq \slope_A(R)$.    
  \end{remark}

\subsection{Filtrations}
\

  Another tool for proving that an algebra is Koszul is a ``divide and conquer" strategy that can be formulated in various technical forms, depending on the  goal one has in mind. We choose the following: 

\begin{definition} A Koszul filtration of a $K$-algebra $R$ is a set $\F$ of ideals of $R$ such that: 
\begin{itemize} 
\item[(1)]  Every ideal $I\in \F$ is generated by elements of degree $1$.
\item[(2)] The zero ideal $0$ and the maximal ideal $\mm_R$ are in $\F$. 
\item[(3)] For every $I\in \F$, $I\neq 0$, there exists $J\in \F$ such that $J\subset I$,  $I/J$ is cyclic and $\Ann(I/J)=J:I\in \F$. 
\end{itemize}
\end{definition} 

By the very definition a Koszul filtration must contain a complete flag of $R_1$, that is, an increasing sequence $I_0=0\subset  I_1\subset \cdots \subset I_{n-1}\subset I_n=\mm_R$ such that $I_i$ is minimally generated by $i$ elements of degree $1$. The case where $\F$ consists of just a single flag deserves a name: 

\begin{definition} 
 A Gr\"obner flag for $R$ is a Koszul filtration that consists of a single complete flag of $R_1$. In other words, $\F=\{I_0=0\subset I_1\subset \cdots \subset I_{n-1}\subset I_n=\mm_R\}$ with  $I_{i-1}:I_i\in \F$ for every $i$.
\end{definition} 

One has: 

\begin{lemma} 
\label{filtflag} Let $\F$ be a Koszul filtration for $R$. Then one has: 
\begin{itemize} 
\item[(1)]  $\reg_R (R/I)=0$ and $R/I$ is Koszul for every $I\in \F$.   
\item[(2)]  $R$ is Koszul. 
\item[(3)] If $\F$ is a Gr\"obner flag, then $R$ is G-quadratic. 
\end{itemize}
\end{lemma}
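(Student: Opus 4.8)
The plan is to prove (1) by induction on the projective/resolution depth, using the third axiom of a Koszul filtration to peel off one generator at a time, and then deduce (2) and (3) almost immediately. More precisely, I would first prove the following uniform statement: for every $I\in\F$, one has $\reg_R(R/I)=0$, equivalently $R/I$ has a linear resolution as an $R$-module (so in particular $\reg_R(I)\le 1$ when $I\ne 0$). Since there are only finitely many ideals in $\F$ and the relation ``$J\subsetneq I$'' coming from axiom (3) is well-founded (each step drops the number of generators), I can induct: the base case is $I=0$, where $R/I=R$ has $\reg_R(R)=0$ trivially. For the inductive step, take $I\in\F$ with $I\ne 0$ and pick $J\in\F$ as in axiom (3), so $J\subset I$, $I/J$ is cyclic, and $L:=(J:I)=\Ann(I/J)\in\F$. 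Then $I/J\cong (R/L)(-1)$ as graded $R$-modules (the shift because $I/J$ is generated by an element of degree $1$), and by the inductive hypothesis $\reg_R(R/L)=0$, hence $\reg_R(I/J)=1$.

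Now I feed the two short exact sequences
\[
0\to J\to I\to I/J\to 0,\qquad 0\to I\to R\to R/I\to 0
\]
into Lemma \ref{blabla}(1). From the first sequence, $\reg_R(I)\le\max\{\reg_R(J),\reg_R(I/J)+1\}$. By induction $\reg_R(R/J)=0$; if $J\ne 0$ this gives $\reg_R(J)\le 1$ (from $0\to J\to R\to R/J\to 0$ and Lemma \ref{blabla}(1), since $\reg_R(J)\le\max\{\reg_R(R),\reg_R(R/J)+1\}=1$), and if $J=0$ then $\reg_R(J)=-\infty$; either way $\reg_R(J)\le 1$. Combined with $\reg_R(I/J)+1\le 2$ this only gives $\reg_R(I)\le 2$, which is not yet sharp, so the argument has to be run at the level of the $t_i$'s rather than the regularity — this is the one delicate point. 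The fix is to prove by induction on $i$ the sharper bound $t_i^R(R/I)\le i$ for all $I\in\F$ and all $i$, again using Lemma \ref{blabla}(2): apply it to the complex $0\to J\to F\to R/I\to 0$ obtained by splicing, or more directly track that $t_i^R(I)\le\max\{t_i^R(J),\,t_{i-1}^R(R/L)+1\}$ and use $t_{i-1}^R(R/L)\le i-1$ from the induction on $I$ (resolution-length induction is interleaved with the well-founded induction on $\F$, which is legitimate since $L$ is ``smaller'' than $I$ in $\F$). Carefully bookkeeping the indices yields $t_i^R(R/I)\le i$, i.e. $\reg_R(R/I)=0$, and then $R/I$ Koszul follows because $t_i^R(K)$ over $R/I$ is governed by the same filtration: indeed $\F$ descends to a Koszul filtration of $R/I$ (the images of the ideals of $\F$ containing $I$), so applying the already-proved part with $R$ replaced by $R/I$ and the ideal taken to be the maximal ideal gives $\reg_{R/I}(K)=0$.

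For (2), take $I=\mm_R\in\F$: the statement $\reg_R(R/\mm_R)=\reg_R(K)=0$ is exactly Koszulness of $R$, so (2) is the case $I=\mm_R$ of (1). For (3), suppose $\F=\{I_0=0\subset I_1\subset\cdots\subset I_n=\mm_R\}$ is a Gröbner flag, so each $I_{i-1}:I_i$ again lies in $\F$, hence equals some $I_{j}$ with $j\le i-1$. Choose linear forms $x_1,\dots,x_n$ with $I_i=(x_1,\dots,x_i)$; I claim a suitable term order makes the defining ideal of $R$ have a quadratic Gröbner basis. The relation $I_{i-1}:I_i=I_{j_i}$ says precisely that $x_k x_i\in I_{i-1}$ for $k\le j_i$, i.e. $x_k x_i$ reduces modulo $(x_1,\dots,x_{i-1})$ to a quadric — this produces, lifting to the polynomial ring $S$, quadratic elements of the defining ideal whose leading terms are the $x_k x_i$ in a lex-type order refining $x_1>\cdots>x_n$. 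A standard argument (compare the Hilbert function of $R$ with that of $S$ modulo the initial ideal generated by these quadratic monomials, using that the flag structure forces the quotient to have the right Hilbert function) shows these quadrics form a Gröbner basis, so $R$ is G-quadratic. The main obstacle in the whole proof is the sharpness issue in part (1): the naive application of Lemma \ref{blabla}(1) to regularities loses a $+1$ at each stage, so one genuinely must descend to the graded Betti number bound $t_i^R(R/I)\le i$ and run a careful double induction (on $i$ and on the partial order of $\F$) to keep it tight.
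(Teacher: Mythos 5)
Your treatment of parts (1)--(2) is essentially the paper's argument: the paper likewise proves $t_i^R(R/I)\leq i$ by a double induction on $i$ and on the number of generators of $I$, using the exact sequence $0\to I/J\to R/J\to R/I\to 0$ with $I/J\cong (R/L)(-1)$, $L=J:I\in\F$, and you correctly identified the key point that one must work with the $t_i$'s rather than with regularity to avoid losing a $+1$ at each stage. Part (3) is also the same sketch as the paper's (quadratic relations $x_hx_{i+1}\in(x_1,\dots,x_i)$ for $i<h\leq j_i$, a term order selecting these as leading monomials, and a Hilbert function comparison driven by the numbers $j_1,\dots,j_n$).

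There is, however, one step that does not work as stated: your deduction that $R/I$ is Koszul rests on the claim that the images of the ideals of $\F$ containing $I$ form a Koszul filtration of $R/I$. This is unjustified and in general false as a mechanism: for $I'\in\F$ with $I'\supseteq I$, the witness $J$ provided by axiom (3) for $I'$ need not contain $I$, and even passing to $(J+I)/I$ one has no control over $\bigl((J+I):I'\bigr)/I$ -- there is no reason it should be the image of a member of $\F$ containing $I$. The paper avoids this entirely: having shown $\reg_R(R/I)=0$ and that $R$ is Koszul (the case $I=\mm_R$), it invokes Theorem \ref{Giux1}(1) (if $\reg_A(B)\leq 1$ and $A$ is Koszul then $B$ is Koszul) with $A=R$, $B=R/I$ to conclude that $R/I$ is Koszul. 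You should replace your descent claim with this appeal to \ref{Giux1}; everything else in your argument then goes through.
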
 
\begin{proof}  (1) and (2): One easily proves  by induction on $i$ and on the number of generators of $I$  that $t_i^R(R/I)\leq i$ for every $i$ and $I\in \F$. This implies that $R$ is Koszul (take $I=\mm_R$) and that $\reg_R (R/I)=0$, hence $R/I$ is Koszul by \ref{Giux1}. 

(3) We just sketch the argument: let $x_1,\dots,x_n$ be a basis for the flag, i.e.  $\F=\{I_0=0\subset I_1\subset \cdots \subset I_{n-1}\subset I_n=\mm_R\}$ and $I_i=(x_1,\dots,x_i)$ for every $i$. For every $i$ there exists $j_i\geq i$ such that 
$(x_1,\dots,x_{i}):x_{i+1}=(x_1,\dots, x_{j_i})$. For every $i<h\leq j_i$ the assertion $x_hx_{i+1}\in (x_1,\dots,x_{i})$ is turned into a quadratic equation in the defining ideal of $R$. The claim is that these quadratic equations form a Gr\"obner basis with respect to a term order that selects $x_hx_{i+1}$ as leading monomial. To prove the claim one shows that the  identified  monomials  define an algebra, call it $A$, whose  Hilbert function equals that of $R$. This is done by showing that the numbers $j_1,\dots, j_n$ associated to the flag of $R$ determine the Hilbert function of $R$ and then by showing that also $A$ has a Gr\"obner flag with associated numbers $j_1,\dots, j_n$. 
\end{proof}  

There are Koszul algebras without Koszul filtrations and G-quadratic algebras without Gr\"obner flags, see the examples given in \cite[pg.100 and 101]{CRV}. Families of algebras having Koszul filtrations or Gr\"obner flags are described in \cite{CRV}. For instance it is proved that  the coordinate ring of a set of  at most $2n$ points in $\PP^n$ in general linear position has a Gr\"obner flag,  and that the general Gorenstein Artinian algebra with socle in degree $3$ has a Koszul filtration. The results for points in  \cite{CRV} generalize results of \cite{CTV,Ke} and are generalized in \cite{Po}.   Filtrations of more general type are used in \cite{CDR} to control  the Backelin  rate of coordinate rings of sets of points in the projective space. 
 
 The following notion is  very natural for algebras with  privileged coordinate systems (e.g. in the the toric case). 

\begin{definition} An algebra $R$ is said to be strongly Koszul if there exists a basis $X$ of $R_1$ such that for every $Y\subset X$ and for every $x\in X\setminus Y$ there exists $Z\subseteq  X$ such $(Y):x=(Z)$. 
\end{definition} 

Our definition of strongly Koszul is slightly different than the one given in \cite{HHR}. In \cite{HHR} it is assumed that the  basis $X$ of $R_1$ is totally ordered  and in the definition one adds the requirement that $x$ is larger than every element in $Y$. 
  
\begin{remark} If $R$ is strongly Koszul with respect to a basis $X$ of $R_1$ then the set $\{ (Y) : Y\subseteq X\}$ is obviously a Koszul filtration. \end{remark} 

We have: 
\begin{theorem}
\label{sKos}
Let $R=S/I$ with $S=K[x_1,\dots,x_n]$ and $I\subset S$ an ideal generated by monomials of degrees $\leq d$. 
Then $R^{(c)}$ is strongly Koszul for every $c\geq d-1$.
\end{theorem}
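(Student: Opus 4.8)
The plan is to exhibit an explicit basis $X$ of $(R^{(c)})_1$ and verify the strong Koszul colon condition directly, reducing everything to monomial combinatorics. First I would set up notation: $R^{(c)}$ is generated as a $K$-algebra by the images of the monomials of $S$ of degree $c$ that do not lie in $I$, so the natural candidate for $X$ is
$$X = \{\, m \in S : m \text{ is a monomial},\ \deg m = c,\ m \notin I \,\},$$
identified with their images in $R^{(c)}$. Since $I$ is a monomial ideal, the monomials of degree $c$ outside $I$ form a $K$-basis of $(R^{(c)})_1$, so $X$ is indeed a basis. The goal is then: for every subset $Y \subseteq X$ and every $m_0 \in X \setminus Y$, the colon ideal $(Y) :_{R^{(c)}} m_0$ is generated by a subset $Z \subseteq X$.

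The key step is to compute this colon ideal combinatorially. Fix $m_0 \in X$ and $Y \subseteq X$. Let $J \subseteq S$ be the (monomial) ideal $(I + (n : n \in Y)) :_S m_0$; because $I$ and all the $n\in Y$ are monomials, $J$ is again a monomial ideal of $S$, generated by monomials $w$ such that $w m_0 \in I$ or $w m_0$ is divisible by some $n \in Y$. The claim I would prove is that $(Y) :_{R^{(c)}} m_0$ is generated (as an ideal of $R^{(c)}$) by the set
$$Z = \{\, w \in X : w m_0 \in (Y) \text{ in } R^{(c)} \,\} = \{\, w\in X : w \in J \,\},$$
i.e. by those degree-$c$ monomials outside $I$ that already land in the colon. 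The nontrivial inclusion is that an arbitrary element of $(Y):_{R^{(c)}} m_0$, which a priori is a $K$-combination of degree-$c$ monomials $w\notin I$ with $w m_0 \in I + (Y)$, lies in the ideal $(Z)R^{(c)}$. Here is where the hypothesis $c \geq d-1$ enters: if $w$ has degree $c$, $w\notin I$, and $w m_0 \in I$, then since $I$ is generated in degrees $\le d$ there is a generator $g \mid w m_0$ with $\deg g \le d$; writing $g = g' g''$ with $g' \mid w$ and $g'' \mid m_0$, one has $\deg g' \le d \le c+1$, so $w = g' \cdot (w/g')$ with $\deg(w/g') = c - \deg g' + \text{(something)} \ge 0$ — one massages the exponents so that $w$ factors as $w = u \cdot v$ with $u$ a degree-$c$ monomial in $J$ (hence, after checking $u\notin I$, in $Z$) times $v$ a degree-$c$ monomial; the inequality $c\ge d-1$ is exactly what guarantees the "leftover" piece $u$ can be padded up to degree $c$ while staying in $J$. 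The case $w m_0 \in (n)$ for some $n\in Y$ with $n$ of degree $c$ is easier since then $\gcd$ considerations immediately give the needed factorization. Thus every element of the colon ideal is a sum of multiples of elements of $Z$, giving $(Y):_{R^{(c)}} m_0 = (Z)$.

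The main obstacle I anticipate is precisely this padding/lifting argument: ensuring that when $wm_0$ is divisible by a low-degree generator of $I$, the corresponding "divisor" monomial $u$ can be chosen inside $X$ (degree exactly $c$ and, crucially, \emph{not} in $I$) — one must check that $u\notin I$, which follows because $u \mid w$ and $w\notin I$ and $I$ is a monomial ideal, so any monomial divisor of $w$ is outside $I$. Once the bookkeeping of exponent vectors is organized — decomposing a degree-$(c + \deg m_0)$ monomial lying in $I + (Y)$ into a degree-$c$ "reason" in $J$ times a degree-$c$ "witness" — the verification of conditions (1)–(3) in the definition of strongly Koszul is routine, and then Theorem \ref{filtflag} (via the Koszul filtration $\{(Y) : Y \subseteq X\}$) gives that $R^{(c)}$ is Koszul as a bonus consequence, though the statement only asks for strong Koszulness.
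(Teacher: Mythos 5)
Your overall strategy is the same as the paper's: take $X$ to be the degree-$c$ monomials outside $I$, reduce the colon computation to the monomial colon $J=(I+(m:m\in Y)):_S m_0$ in $S$, and use $c\ge d-1$ to pad the resulting generators up to degree $c$ while staying outside $I$. (The paper packages the reduction as two ``Facts'': the monomial formula $(m_1,\dots,m_t):_S m=(m_i/\gcd(m_i,m))$, and the contraction identity $(\mathfrak{a}T:_T f)\cap A=\mathfrak{a}:_A f$ for $A=T^{(c)}$ a direct summand of $T$; you use both implicitly.) So the plan is sound, but the execution of the key step is confused about degrees in a way that currently begs the question.

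Concretely: you assert that an arbitrary element of $(Y):_{R^{(c)}}m_0$ is a $K$-combination of \emph{degree-$c$} monomials $w\notin I$ with $wm_0\in I+(Y)$. That is precisely what has to be proved, not what can be assumed: a priori the colon ideal has homogeneous components in every degree $jc$, and it is exactly the monomials of degree $jc$ with $j\ge 2$ that require an argument. For a degree-$c$ monomial $w\notin I$ with $wm_0\in I+(Y)$ there is nothing to do, since $w\in Z$ by your own definition of $Z$; and your proposed factorization $w=u\cdot v$ with $u$ and $v$ both of degree $c$ is impossible when $\deg w=c$. The padding argument should instead be applied to a monomial $w\notin I$ of degree $jc$, $j\ge 2$, lying in $J$: one finds a generator $h$ of $J$ not in $I$ with $h\mid w$ and $\deg h\le c$, pads it to a divisor $u$ of $w$ with $\deg u=c$ (so $u\in J\setminus I$, hence $\bar u\in Z$), and writes $w=uv$ with $\deg v=(j-1)c$, so $\bar v\in R^{(c)}$. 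Note also that your bound $\deg\bigl(g/\gcd(g,m_0)\bigr)\le d\le c+1$ is not good enough to carry out the padding (you need $\le c$); the missing observation is that if $w\notin I$ then $\gcd(g,m_0)\ne 1$ for any minimal generator $g$ of $I$ dividing $wm_0$ (otherwise $g\mid w$), so in fact $\deg\bigl(g/\gcd(g,m_0)\bigr)\le d-1\le c$. With these corrections your argument becomes the paper's proof.
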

\begin{proof} In the proof we use the following basic facts. \medskip 

\noindent Fact (1):  if $m_1,\dots,m_t, m$ are monomials of $S$,  then 
$(m_1,\dots,m_t):_Sm$ is generated by the monomials $m_i/\gcd(m_i,m)$ for  $i=1,\dots,t.$
\medskip 

\noindent Fact (2):  if $T$ is an algebra and $A=T^{(c)}$,  then for every ideal  $I\subset A$ and $f\in A$ one has $IT\cap A=I$  and $(IT:_Tf)\cap A=I:_Af$.
\medskip 

The first is an elementary and well-know property of monomials, the second holds true because $A$ is a direct summand of $T$. 

Let $A=R^{(c)}$. Let $X$ be the set of the residue classes in $R$ of the monomials of degree $c$ that are not in $I$. Clearly $X$ is a basis of $A_1$. Let $Y\subset X$ and $z\in X\setminus Y$, say $Y=\{\bar m_1, \dots, \bar m_v\}$ and $z=\bar m$. We have to compute $(Y):_Az$. To this end  let us consider $J=(I+(m_1, \dots, m_v)):_S m$ and note that $J=I+H$ with $H$ a monomial ideal generated in degrees $\leq c$. Then 
$(Y):_Az=( \bar m : m\in H\setminus I \mbox{ is a monomial of degree } c)$. 
\end{proof} 
 Let us single out two interesting special cases: 
\begin{theorem}
Let $S=K[x_1,\dots,x_n]$. Then $S^{(c)}$ is strongly Koszul for every $c\in \NN$. 
 \end{theorem}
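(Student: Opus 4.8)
The plan is to obtain this as the immediate special case $I = (0)$ of Theorem \ref{sKos}. Indeed, $S$ is of the form $S/I$ with $I$ the zero ideal, which is (vacuously) generated by monomials of degree $\leq d$ for $d = 1$. Theorem \ref{sKos} then yields that $S^{(c)}$ is strongly Koszul for all $c \geq d - 1 = 0$, that is, for every $c \in \NN$. So there is essentially nothing to prove beyond invoking \ref{sKos}.

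If one prefers a self-contained argument --- which is just the proof of \ref{sKos} with the ideal $I$ removed --- one takes $X$ to be the set of all monomials of degree $c$ in $S$, a $K$-basis of $(S^{(c)})_1$. Given $Y = \{m_1,\dots,m_v\} \subseteq X$ and $m \in X \setminus Y$, the fact that $S^{(c)}$ is a direct summand of $S$ (Fact (2) in the proof of \ref{sKos}) reduces the computation of $(Y) :_{S^{(c)}} \bar m$ to the colon ideal $(m_1,\dots,m_v) :_S m$ taken inside $S$. By Fact (1) this is the monomial ideal generated by the elements $m_i/\gcd(m_i,m)$, each of degree $\leq c$ because $\deg m = c$; hence its degree-$c$ component is spanned by monomials of degree $c$, and the residue classes of these monomials form a subset $Z \subseteq X$ with $(Y) :_{S^{(c)}} \bar m = (Z)$. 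This verifies the defining condition of strong Koszulness.

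There is no real obstacle here: the statement is a corollary of \ref{sKos}, and even a direct verification rests only on the two elementary facts about monomial colon ideals already recorded in the proof of \ref{sKos}. The only minor point to watch is the degree bound $\deg(m_i/\gcd(m_i,m)) \leq c$, which ensures that the relevant colon ideal is generated in degrees $\leq c$, so that its degree-$c$ piece indeed comes from monomials lying in $X$.
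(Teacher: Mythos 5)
Your proposal is correct and is exactly the paper's intended argument: the paper states this theorem immediately after Theorem \ref{sKos} as one of "two interesting special cases," i.e.\ it is obtained by taking $I=(0)$ (so any $d$, in particular $d=1$, works and the bound $c\geq d-1$ covers all $c\in\NN$). Your self-contained verification is just the proof of \ref{sKos} specialized to $I=0$, including the correct observation that the generators $m_i/\gcd(m_i,m)$ of the colon ideal have degree $\leq c$, so nothing further is needed.
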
 
\begin{theorem}
\label{monKos}
Let $S=K[x_1,\dots,x_n]$ and let $I\subset S$ be an ideal generated by monomials of degree $2$. Then $S/I$ is strongly Koszul. 
 \end{theorem}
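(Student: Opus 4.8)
The plan is to obtain this theorem as the special case $d=2$, $c=1$ of Theorem~\ref{sKos}. Indeed $I$ is generated by monomials of degree $2$, so it is generated by monomials of degrees $\le d$ with $d=2$, and the value $c=1$ satisfies $c\ge d-1=1$; since $R^{(1)}=\oplus_{j}R_{j}=S/I$, Theorem~\ref{sKos} gives exactly the assertion. So formally a single sentence suffices, but it is worth recording the direct argument, which avoids passing through a Veronese ring.

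First I would fix the basis $X=\{\bar x_1,\dots,\bar x_n\}$ of $R_1$, where $R=S/I$ and $\bar x_i$ denotes the residue class of $x_i$; these classes are nonzero and linearly independent because $I$ contains no form of degree $1$. Given a subset $Y\subseteq X$, write $Y=\{\bar x_i:i\in A\}$ for some $A\subseteq\{1,\dots,n\}$, and let $\bar x_j\in X\setminus Y$, so that $j\notin A$. The point is to compute the colon ideal $(Y):_R\bar x_j$, and this can be done after lifting to $S$: one has $(Y):_R\bar x_j=\bigl(\,\bigl((x_i:i\in A)+I\bigr):_S x_j\,\bigr)R$, since $R=S/I$.

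Next I would apply Fact~(1) from the proof of Theorem~\ref{sKos}, namely that the colon of a monomial ideal by a monomial is again monomial, with generators $m/\gcd(m,x_j)$ as $m$ runs over a monomial generating set. Applied to the generators $x_i$ with $i\in A$ and to the degree-$2$ monomial generators of $I$, this yields: each $x_i$ (with $i\in A$, hence $i\ne j$) contributes $x_i$ back; a degree-$2$ generator $x_kx_\ell$ of $I$ contributes $x_kx_\ell\in I$ when $j\notin\{k,\ell\}$, and contributes the single variable $x_\ell$ (resp.\ $x_k$) when $j=k$ (resp.\ $j=\ell$). Reducing modulo $I$, we conclude that $(Y):_R\bar x_j$ is generated by $\{\bar x_i:i\in A\}\cup\{\bar x_\ell: x_jx_\ell\in I\}$, which is a subset $Z$ of $X$. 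This is precisely the defining property of a strongly Koszul algebra.

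There is essentially no obstacle. The only points requiring a word of care are that every variable survives in $R$ --- guaranteed because $I$ is generated in degree $\ge 2$, so that the variables listed above genuinely belong to $X$ --- and the legitimacy of computing the colon ideal after lifting along the surjection $S\to R$, which is immediate. The real content is in Theorem~\ref{sKos} itself; once it is available, Theorem~\ref{monKos} costs nothing.
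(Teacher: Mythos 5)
Your proposal is correct and matches the paper exactly: the paper states Theorem~\ref{monKos} without separate proof, as a special case of Theorem~\ref{sKos} (take $d=2$ and $c=1$, noting $R^{(1)}=R$), which is precisely your first paragraph. Your supplementary direct computation of $(Y):_R\bar x_j$ is also correct, but it is just Fact~(1) from the proof of Theorem~\ref{sKos} unwound in this special case rather than a genuinely different argument.
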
 
 
Given a Koszul filtration $\F$ for an algebra $R$ we may also look at modules having a filtration compatible with $\F$. 
This leads us to the following: 

\begin{definition} Let $R$ be an algebra  with a Koszul filtration $\F$. Let $M$ be an $R$-module. We say that $M$ has linear quotients with respect to $\F$ if $M$  is minimally generated by elements $m_1,\dots,m_v$ such that $\langle m_1,\dots, m_{i-1} \rangle:_Rm_i\in \F$ for $i=1,\dots,v$. 
\end{definition}

One easily deduces: 

\begin{lemma}
\label{facile}
 Let $R$ be an algebra with a Koszul filtration $\F$ and  $M$ an $R$-module with linear quotients with respect to $\F$. Then $\reg_R (M)=t_0^R(M)$. 
\end{lemma}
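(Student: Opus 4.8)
The plan is to induct on the number of generators $v$ of $M$, mimicking the proof of \ref{filtflag}(1). Let $m_1,\dots,m_v$ be minimal generators of $M$ witnessing the linear-quotients property, and assume after a degree shift that $\deg m_v = t_0^R(M) =: d$ (actually one should be slightly careful: the $m_i$ need not all have the same degree, so I will set $d = t_0^R(M)$ and keep track of the shift). Write $N = \langle m_1,\dots,m_{v-1}\rangle$ and $I_v = N :_R m_v \in \F$. There is a short exact sequence
\[
0 \longrightarrow R/I_v(-d_v) \stackrel{\cdot m_v}{\longrightarrow} M/N \longrightarrow 0 \quad\text{is wrong; instead use}\quad
0 \longrightarrow N \longrightarrow M \longrightarrow M/N \longrightarrow 0,
\]
together with the isomorphism $M/N \cong (R/I_v)(-d_v)$ where $d_v = \deg m_v$. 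By \ref{filtflag}(1), $R/I_v$ is Koszul with $\reg_R(R/I_v) = 0$, so $\reg_R(M/N) = d_v \leq d$. By induction on $v$, since $m_1,\dots,m_{v-1}$ exhibit linear quotients for $N$ with respect to $\F$, we have $\reg_R(N) = t_0^R(N) \leq d$ (the generators of $N$ are among the $m_i$, all of degree $\leq d$).

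Now apply \ref{blabla}(1) to the short exact sequence: $\reg_R(M) \leq \max\{\reg_R(N), \reg_R(M/N)\} \leq d$. Combined with the trivial bound $\reg_R(M) \geq t_0^R(M) = d$, this gives $\reg_R(M) = t_0^R(M) = d$, completing the induction. The base case $v = 1$ is exactly the statement that $M \cong (R/I_1)(-d_1)$ for $I_1 = 0:_R m_1 \in \F$, which is Koszul with regularity $0$ by \ref{filtflag}(1), so $\reg_R(M) = d_1 = t_0^R(M)$.

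The one point requiring care — and the only real obstacle — is the bookkeeping when the generators $m_i$ have different degrees, so that the short exact sequence involves shifts by $d_i \leq d$ rather than a uniform $d$; one must check that the middle inequality in \ref{blabla}(1) still delivers $\reg_R(M) \leq d$ and that the inductive hypothesis $\reg_R(N) = t_0^R(N)$ is correctly applied (note $t_0^R(N) \leq t_0^R(M)$ since the generators of $N$ form a subset of those of $M$). Everything else is a routine induction, so I would present it compactly. An alternative, essentially equivalent route is to prove directly by induction on $i$ that $t_i^R(M) \leq t_0^R(M) + i$ using \ref{blabla}(2) applied to a suitable complex built from the filtration, but the short-exact-sequence argument above is cleaner.
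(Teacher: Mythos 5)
The paper leaves this lemma unproved (``One easily deduces''), and your induction on the number of generators --- splitting off the cyclic quotient $M/N\cong (R/I_v)(-d_v)$ with $I_v=N:_R m_v\in\F$ and combining \ref{filtflag}(1) with the middle inequality of \ref{blabla}(1) --- is exactly the intended routine argument, and it is correct. The degree bookkeeping you flag does go through: $\reg_R(M)\ge t_0^R(M)$ holds trivially, $m_1,\dots,m_{v-1}$ remain a minimal generating set of $N$ because a dependence modulo $\mm N\subseteq\mm M$ would contradict minimality in $M$, and the inductive hypothesis applies to $N$ since its linear-quotients data is inherited from that of $M$.
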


As an example we have: 

\begin{proposition}
\label{verofil}
Let $S=K[x_1,\dots,x_n]$ and $I$ be a monomial ideal generated in degree $\leq d$. Consider $R=S/I$ and the Veronese ring $R^{(c)}$ equipped with the Koszul filtration described in the proof of \ref{sKos}. For every $u=0,\dots,c-1$ the Veronese module $V_u=\oplus_j R_{u+jc}$ has   linear quotients with respect to $\F$.  
\end{proposition}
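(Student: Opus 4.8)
The plan is to follow closely the argument for Theorem~\ref{sKos}, replacing the generators of the ideal $(Y)$ there by the minimal generators of $V_u$. Since $I$ is a monomial ideal, $V_u=\oplus_j R_{u+jc}$ is minimally generated by the residue classes $n_1,\dots,n_v$ of the monomials of degree $u$ of $S$ not lying in $I$ (these form a $K$-basis of $R_u$, i.e.\ of the degree-$0$ part of $V_u$), and, as in the proof of \ref{sKos}, $R^{(c)}$ is a direct summand of $R$ as an $R^{(c)}$-module. I would fix any ordering $n_1,\dots,n_v$ of these generators and show that $\langle n_1,\dots,n_{i-1}\rangle_{R^{(c)}}:_{R^{(c)}}n_i\in\F$ for every $i$; in fact every ordering will work.

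First I would reduce the colon from $R^{(c)}$ to $S$. Put $L=(n_1,\dots,n_{i-1})R$ and $J_i=\bigl((n_1,\dots,n_{i-1})+I\bigr):_S n_i\subseteq S$, a monomial ideal containing $I$; write $J_i=I+H_i$, where $H_i$ is generated by those monomial generators of $J_i$ not in $I$. Because $L$ is graded and all $n_k$ have degree $u$, the submodule $\langle n_1,\dots,n_{i-1}\rangle_{R^{(c)}}$ of $V_u$ equals $L\cap V_u$; since $gn_i\in V_u$ for every $g\in R^{(c)}$, this gives
$$\langle n_1,\dots,n_{i-1}\rangle_{R^{(c)}}:_{R^{(c)}}n_i=(L:_R n_i)\cap R^{(c)}.$$
As $L:_R n_i$ is the image in $R$ of $J_i$, the right-hand side is the ideal of $R^{(c)}$ generated by the classes $\bar w$ of the monomials $w\in H_i\setminus I$ with $c\mid\deg w$.

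Next I would bound the degrees of the generators of $H_i$ and then ``pad up'' to degree $c$. By Fact~(1) in the proof of \ref{sKos}, $J_i$ is generated by $n_j/\gcd(n_j,n_i)$ for $j<i$ (degree $\le u\le c-1$) and by $g/\gcd(g,n_i)$ for $g$ among the degree-$\le d$ monomial generators of $I$; such a $g/\gcd(g,n_i)$ has degree $\le d\le c+1$, and if its degree is $c+1$ then $\deg g=d=c+1$ and $\gcd(g,n_i)=1$, so $g/\gcd(g,n_i)=g\in I$ and contributes nothing to $H_i$. Hence $H_i$ is generated in degrees $\le c$. Now let $w\in H_i\setminus I$ with $c\mid\deg w$; since $\bar n_1,\dots,\bar n_v$ are $K$-linearly independent in $R_u$ one has $\deg w=ec$ with $e\ge1$. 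Pick a generator $h$ of $H_i$ with $h\mid w$, so $\deg h\le c$, and set $w_0=hv$ with $v$ a monomial of degree $c-\deg h$ dividing $w/h$ (possible since $\deg(w/h)=ec-\deg h\ge c-\deg h$). Then $w_0\mid w$, $\deg w_0=c$, $w_0\in H_i$ (as $h\mid w_0$), and $w_0\notin I$ (as $w\notin I$), so $\bar w_0$ belongs to the basis $X$ of $(R^{(c)})_1$ used to build $\F$, and $\bar w=\bar w_0\cdot\overline{w/w_0}$ with $\overline{w/w_0}\in R^{(c)}$. Consequently $\langle n_1,\dots,n_{i-1}\rangle_{R^{(c)}}:_{R^{(c)}}n_i=(Z_i)$ with $Z_i=\{\bar w:w\in H_i\setminus I,\ \deg w=c\}\subseteq X$, and $(Z_i)\in\F$ by definition of $\F$.

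The only step that is more than bookkeeping is the degree bound on $H_i$: one has to see that a would-be generator of $H_i$ of degree $c+1$ is forced to be a generator of $I$, hence is discarded — this is exactly where the hypothesis $c\ge d-1$ (needed already for $\F$ to be defined, by \ref{sKos}) enters, in the same way as in \ref{sKos}. The reduction to $S$ via the direct-summand property and the padding argument are routine. Finally, combining this with \ref{facile} recovers, for monomial $I$, the equality $\reg_{R^{(c)}}(V_u)=0$ of \ref{cc1}.
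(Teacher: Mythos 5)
Your argument is correct and follows exactly the route the paper intends: the paper's own proof of \ref{verofil} is the single remark that it is ``again based on Fact (1) in the proof of \ref{sKos}'', and your computation of the colon ideals via Fact (1), the reduction from $R^{(c)}$ to $S$ by the direct-summand property, the degree bound on $H_i$ using $c\geq d-1$, and the padding of generators up to degree $c$ supply precisely the details the authors omit. Nothing to correct.
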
 

The proof is easy, again based on Fact (1) in the proof of \ref{sKos}.   In particular, this gives another proof of the fact that the Veronese modules $V_u$ have  a linear $R^{(c)}$-resolution.

The results and the proofs presented for Veronese rings and Veronese modules have their analogous in the multigraded setting, see \cite{CHTV}. For later applications we mention explicitly one case. 

Let $S=K[x_1,\dots,x_n,y_1,\dots,y_m]$ with $\ZZ^2$-grading induced by the assignment $\deg(x_i)=(1,0)$ and $\deg(y_i)=(0,1)$. For every $c=(c_1,c_2)$ we look at the diagonal subalgebra $S_\Delta=\oplus_{a\in \Delta} S_{a}$ where $\Delta=\{ic : i\in \ZZ\}$. The algebra $S_\Delta$ is nothing but the Segre product of the $c_1$-th Veronese ring of $K[x_1,\dots,x_n]$ and the $c_2$-th Veronese ring of $K[y_1,\dots,y_m]$. We have: 

\begin{proposition}
\label{Segremod}
For every $(a,b)\in \ZZ^2$ the  $S_\Delta$-submodule of $S$ generated by $S_{(a,b)}$ has a linear resolution. 
 \end{proposition}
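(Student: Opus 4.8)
The plan is to identify the module in question as a Segre product of two Veronese modules, one in each group of variables, and then to invoke the Segre analogue of Theorem~\ref{cc1}.

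If $a<0$ or $b<0$ then $S_{(a,b)}=0$ and there is nothing to prove, so assume $a,b\ge 0$ and let $N$ be the $S_\Delta$-submodule of $S$ generated by $S_{(a,b)}$. For every $j\ge 0$ one has $S_{jc}\cdot S_{(a,b)}=S_{(a+jc_1,\,b+jc_2)}$, since any monomial of bidegree $(a+jc_1,b+jc_2)$ is divisible by some monomial of bidegree $(a,b)$. Hence the component of $N$ in $\Delta$-degree $j$ is $S_{(a+jc_1,\,b+jc_2)}=(K[x_1,\dots,x_n])_{a+jc_1}\otimes_K(K[y_1,\dots,y_m])_{b+jc_2}$. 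Writing $A=K[x_1,\dots,x_n]^{(c_1)}$ and $B=K[y_1,\dots,y_m]^{(c_2)}$, so that $S_\Delta$ is the Segre product of $A$ and $B$, this says that $N=\bigoplus_j M'_j\otimes_K M''_j$ is the Segre product of the $A$-module $M'$ generated by $(K[x_1,\dots,x_n])_a$ and the $B$-module $M''$ generated by $(K[y_1,\dots,y_m])_b$, both generated in degree $0$.

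I would then record that $A$ and $B$ are Koszul and that $M'$, $M''$ have linear resolutions over them. This follows from Theorem~\ref{cc1} applied to the (Koszul) polynomial rings $K[x_1,\dots,x_n]$ and $K[y_1,\dots,y_m]$: for instance $M'$ coincides, up to a degree shift, with $\mm_A^{\lfloor a/c_1\rfloor}V_u$, where $V_u=\bigoplus_j(K[x_1,\dots,x_n])_{u+jc_1}$ is the Veronese module with $u\equiv a\pmod{c_1}$; since $\reg_A(V_u)=0$ by \ref{cc1} and $A$ is Koszul, Lemma~\ref{mM-reg} gives $\reg_A(M')=0$, i.e.\ $M'$ has a linear $A$-resolution, and symmetrically for $M''$. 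Now the statement reduces to the Segre/diagonal analogue of Theorem~\ref{cc1}, established in \cite{CHTV}: the Segre product of Koszul algebras is Koszul, and the Segre product of two modules with linear resolutions over the respective factors has a linear resolution over the Segre product algebra. Applied to $A$, $B$, $M'$, $M''$ this yields $\reg_{S_\Delta}(N)=0$, as claimed.

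Finally, the proof of that analogue imitates the proof of \ref{cc1}, with the Veronese functor replaced by the exact \emph{diagonal functor} $L\mapsto L_\Delta:=\bigoplus_j L_{jc}$ on $\ZZ^2$-graded $S$-modules: one resolves a suitable module over $S$, applies $L\mapsto L_\Delta$, and runs an induction on homological degree through Lemma~\ref{blabla}, using that $L_\Delta$ sends a shifted free $S$-module to a shift of a module of the form $\bigoplus_j S_{\rho+jc}$. I expect the genuine obstacle — and the reason I would cite \cite{CHTV} rather than reproduce the argument — to lie in this induction. In the ordinary Veronese case there are only the finitely many building blocks $V_0,\dots,V_{c-1}$, so ``$t_i\le i$ for all of them'' is a single inductive statement; here the analogous blocks $\bigoplus_j S_{\rho+jc}$ are indexed by the infinitely many cosets $\rho$ of $\ZZ c$ in $\ZZ^2$, and when $\rho\notin\Delta$ such a block is the diagonal of a \emph{free} $S$-module, so merely diagonalizing a resolution over $S$ produces no useful $S_\Delta$-resolution of it; the induction has to be propelled by an extra device, for example by passing to a finer multigrading or by relating consecutive blocks through the multiplication-by-$x_i$ short exact sequences. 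This bookkeeping is routine but lengthy, and is precisely what is carried out in \cite{CHTV}; once $S_\Delta$ is known to be Koszul and all these blocks linearly resolved, the identifications above give $\reg_{S_\Delta}(N)=0$.
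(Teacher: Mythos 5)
Your argument is correct and matches the paper's route: the paper gives no proof of \ref{Segremod} at all, deferring entirely to the multigraded analogues of the Veronese arguments in \cite{CHTV}, which is exactly where your reduction also terminates. Your identification of the module as the Segre product $M'\ast M''$ of two Veronese modules, with \ref{cc1} and \ref{mM-reg} disposing of the factors, is a correct and more explicit version of the same approach.
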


\section{Absolutely and universally}
\label{stronKo}
We have discussed  in the previous sections some notions,  such as being G-quadratic, strongly Koszul, having a Koszul filtration or  a Gr\"obner flag that imply Koszulness. In the present section we discuss two variants of the Koszul property: universally Koszul and absolutely Koszul. 

\subsection{Universally Koszul} 
\

When looking for a Koszul filtration, among the many families of ideals of linear forms  one can take the set of all ideals of linear forms. This leads to the following definition: 

\begin{definition} Let $R$ be a $K$-algebra and set 
$$\LL(R)=\{ I\subset R : I \mbox{ ideal generated by elements of degree } 1\}. $$
We say that $R$ is universally Koszul if the following equivalent conditions hold: 
\begin{itemize} 
\item[(1)] $\LL(R)$  is a Koszul filtration of $R$.  
\item[(2)] $\reg_R (R/I)=0$ for every $I\in \LL(R)$.  
\item[(3)] For every $I\in \LL(R)$ and $x\in R_1\setminus I$ one has $I:x\in \LL(R)$.
\end{itemize} 
\end{definition} 

That the three conditions are indeed equivalent is easy to see, see \cite[1.4]{C1}. In \cite[2.4]{C1} it is proved that:

 \begin{theorem}
\label{uk1} 
Let $S=K[x_1,\dots,x_n]$ and $m\in \NN$. If  $m\leq n/2$, then a generic space of quadrics  of  codimension $m$ in the vector space of quadrics defines a  universally Koszul algebra.    
\end{theorem}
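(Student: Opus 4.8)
The plan is to verify condition (3) in the definition of universally Koszul for a generic choice of the $m$-dimensional... wait, codimension $m$ space of quadrics, using a dimension count in the parameter space of pairs $(I,x)$ together with the semicontinuity of the relevant ranks. Write $V = S_2$ for the $\binom{n+1}{2}$-dimensional space of quadrics and let $W \subset V$ be the generic subspace of codimension $m$ that we use to define $R = S/(W)$; the statement only involves linear forms of $R$, so fix $x \in S_1$ nonzero and an ideal $L = (\ell_1,\dots,\ell_k) \subset R$ generated by linear forms with $x \notin L$. Condition (3) asks that $L:_R x$ be again generated by linear forms. Since $L:_R x \supseteq L + (x)\cdot(\text{something})$... more precisely, $L:_R x$ always contains $L$, and the question is whether the degree-$2$ part $(L:_R x)_2 = \{f \in R_1 : xf \in L\} $ together with $R_1$ itself already generates it in degrees $\geq 2$; the only obstruction lives in degree $1$, i.e.\ one must show $(L:_R x)_1 \cdot R_1 = (L:_R x)_2$. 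First I would reduce, by the standard argument of \cite[1.4]{C1}, to checking this single equality in degree $2$, and observe that both sides are computed from the multiplication map $R_1 \to R_2/L_2$ given by $f \mapsto xf \bmod L$.

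The key step is then linear algebra with a genericity input. Lift everything to $S$: we must compare $(W + \tilde L \cdot S_1)_2 :_S x$ in degree $1$ versus degree $2$, where $\tilde L$ is the linear span of the $\ell_i$. The subtle point is that $x \cdot S_1$ is an $n$-dimensional subspace of $V = S_2$, and we need the generic $W$ to be "as transverse as possible" to all the subspaces of the form $x S_1 + \tilde L S_1$ that arise. The hypothesis $m \le n/2$ is exactly what makes the dimension count work: the family of such subspaces is parametrized by a bounded-dimensional variety (choice of $x$ up to scalar, plus choice of $\tilde L$ of each dimension $k = 0,\dots,n-1$), and for a generic $W$ of codimension $m \le n/2$ one gets that $\dim(W \cap (xS_1 + \tilde L S_1))$ takes its expected minimal value, forcing $(L:_R x)$ to be generated in degree $1$. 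I would set this up as: the bad locus — choices of $(x,\tilde L,W)$ where the expected transversality fails — is a proper closed subvariety of the total parameter space, and projecting to the $W$-factor (a Grassmannian of codimension-$m$ subspaces of $V$) still misses a generic point precisely when $m \le n/2$, because the fiber dimension over the $(x,\tilde L)$-space is controlled by $m$ and the base has dimension governed by $n$.

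I expect the main obstacle to be making the transversality count uniform over all the ideals $L$ simultaneously — a generic $W$ must work for every linear ideal $L$ and every linear form $x$ at once, so one cannot just fix $(x,\tilde L)$ and perturb $W$; one needs the single closed bad locus in the product $\{(x,\tilde L,W)\}$ and then argue that its image in the $W$-Grassmannian is still proper. Bounding the dimension of the space of pairs $(x,\tilde L)$ — roughly $1 + \max_k k(n-k)$ after accounting for scalars and for the fact that $\tilde L$ of dimension $k$ moves in a $k(n-k)$-dimensional Grassmannian — and comparing it against $m \cdot (\dim V - m)$, the generic corank contribution, is where the inequality $m \le n/2$ must be extracted; this is the one genuinely delicate estimate, and I would carry it out by treating separately the ranges of $k$ and checking the worst case occurs around $k \approx n/2$. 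The remaining steps — semicontinuity of $\dim(W \cap U)$ in families, the reduction to a single degree, and the translation back to condition (3) — are routine and I would cite \cite[1.4]{C1} and \ref{blabla} where convenient.
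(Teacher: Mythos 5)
The paper itself gives no proof of \ref{uk1}; it only cites \cite[2.4]{C1}. Judged on its own terms, your outline has genuine gaps. First, your reduction of condition (3) to the single equality $(L:x)_1\cdot R_1=(L:x)_2$ is not justified by \cite[1.4]{C1}: in general one must check that $L:x$ is generated in degree one in \emph{all} degrees. What makes the one-degree reduction legitimate here is that for generic $W$ of codimension $m\leq n/2$ the Hilbert series of $R$ is $1+nz+mz^2$, so $R_3=0$ and hence $(L:x)_2=R_2$ while $(L:x)_d=0$ for $d\geq 3$. You never establish this (it needs its own genericity argument, e.g.\ exhibiting one $W_0$, such as the span of all monomials except $x_1x_2,x_3x_4,\dots,x_{2m-1}x_{2m}$, with $(W_0S)_3=S_3$, and invoking semicontinuity). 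Second, and more seriously, your central claim --- that forcing $\dim\bigl(W\cap(xS_1+\tilde LS_1)\bigr)$ to its expected value forces $L:x$ to be generated in degree one --- does not deliver the conclusion. That intersection dimension only controls $\dim(L:x)_1$; the condition actually needed is $(L:x)_1\cdot R_1=R_2$, which depends on \emph{which} subspace $(L:x)_1=\{\ell: x\ell\in \tilde LS_1+W\}$ is, and that subspace is itself a function of $W$. So the condition on $W$ is not a transversality condition against a fixed family of subspaces indexed by $(x,\tilde L)$, and the incidence-variety/semicontinuity argument in the form you propose does not apply.

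The way out is to decouple the genericity from $(x,L)$ entirely. One shows that a generic $W$ of codimension $m$ satisfies the single open condition: every subspace $U\subseteq R_1$ with $\dim U\geq m$ has $U\cdot R_1=R_2$ (equivalently $US_1+W=S_2$); here the bad locus in the Grassmannian has codimension at least $\binom{m}{2}+1\geq 1$, by a Schubert-type count over the fixed family $Gr(m,S_1)$ --- no constraint relating $m$ and $n$ is needed for this step. Then, for any $L\in\LL(R)$ and $x\notin L$, the kernel of $\ell\mapsto x\ell$ from $R_1$ to $R_2/L_2$ gives $\dim(L:x)_1\geq n-m$, and the hypothesis $m\leq n/2$ enters exactly here, via $n-m\geq m$, so that $(L:x)_1\cdot R_1=R_2=(L:x)_2$. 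In particular your guess that $m\leq n/2$ is extracted from balancing the dimension of the $(x,\tilde L)$-parameter space against $m(\dim V-m)$, with worst case $k\approx n/2$, points at the wrong mechanism; as written, that estimate is neither carried out nor, I believe, the one that works.
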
 

One should compare the result above with the following:

\begin{theorem}
\label{generic} 
Let $S=K[x_1,\dots,x_n]$ and $m\in \NN$.  
\begin{itemize}
\item[(1)] A generic space of quadrics of codimension $m$ defines a Koszul algebra if $m\leq n^2/4$.
\item[(2)] A generic space of quadrics of codimension $m$ defines an  algebra with a  Gr\"obner flag if $m\leq n-1$. 
\end{itemize} 
\end{theorem}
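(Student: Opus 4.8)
The plan is to prove both parts by degenerating the generic space of quadrics to a suitable quadratic monomial ideal and using that algebras defined by quadratic monomials are Koszul (\ref{R16}(2)). Write $r=\binom{n+1}{2}-m$, so that $I_2$ is a generic $r$-dimensional subspace of $S_2$. Observe first that, under either hypothesis, $S_1\otimes I_2\to S_3$ is surjective for generic $I$, i.e.\ $R_3=0$: this is a Zariski-open condition on $I\in\mathrm{Gr}(r,S_2)$, and it holds for the monomial ideals constructed below. So one may work inside $U:=\{J\in\mathrm{Gr}(r,S_2):S_1J=S_3\}$, a nonempty open — hence irreducible — subvariety; for every $J\in U$ the algebra $S/J$ is Artinian with $\mm^3=0$ and $h$-vector $(1,n,m)$. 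In particular the Hilbert function is constant along $U$, so the (normalized) bar complex of $S/J$ is, fiberwise over $U$, a complex of finite-dimensional graded vector spaces of constant dimension whose differentials depend regularly on $J$; since matrix ranks are lower semicontinuous, each $\beta^{S/J}_{ij}(K)=\dim_K H_i(\mathrm{bar})_j$ is upper semicontinuous in $J\in U$.

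For (1): by Mantel's theorem, $m\le\lfloor n^2/4\rfloor$ is precisely what is needed for a graph $G$ on $\{1,\dots,n\}$ with $m$ edges to be chosen triangle-free (take a subgraph of a balanced complete bipartite graph). Let $M\subseteq S$ be the monomial ideal generated by all $x_i^2$ together with $x_ix_j$ for each non-edge $\{i,j\}$ of $G$. Then $\dim_K M_2=r$ and, $G$ being triangle-free, every degree-$3$ monomial is divisible by a generator of $M$, so $M\in U$; and $S/M$, being defined by quadratic monomials, is Koszul, i.e.\ $\beta^{S/M}_{ij}(K)=0$ for $i\ne j$. Applying the semicontinuity above at the generic point of the irreducible $U$ gives $\beta^{S/J}_{ij}(K)\le\beta^{S/M}_{ij}(K)=0$ for all $i\ne j$, so the generic $R$ is Koszul by \ref{AEP}. (That $n^2\ge 4m$ is also necessary is already in \ref{R16}(6): Koszulness forces $1/(1-nz+mz^2)$ to have nonnegative coefficients.)

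For (2): now $m\le n-1$, so in $\PP(S_2)=\PP^{\binom{n+1}{2}-1}$ the linear space $\PP(I_2)$, of dimension $r-1$, meets the quadratic Veronese variety, of dimension $n-1$, because $(r-1)+(n-1)\ge\binom{n+1}{2}-1$; hence $\ell^2\in I_2$ for some linear form $\ell$. For generic $I$ a dimension count (using $m\le n-1$) gives $\ell S_1+I_2=S_2$, so the map $S_1\to R_2$, $x\mapsto\overline{\ell x}$, is onto; let $L$ be its kernel, a space of dimension $n-m$ containing $\ell$. Choose coordinates $x_1=\ell,x_2,\dots,x_n$ with $\langle x_1,\dots,x_{n-m}\rangle=L$ and set $I_j=(x_1,\dots,x_j)R$; I claim $\F=\{0\subset I_1\subset\cdots\subset I_n=\mm_R\}$ is a Gröbner flag. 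From $x_1S_1+I_2=S_2$ one gets $(x_1,\dots,x_{i-1})R:x_i=\mm_R=I_n\in\F$ for every $i\ge2$. For $i=1$: since $\mm_R^3=0$, $\Ann_R(x_1)\supseteq\mm_R^2$ and $\Ann_R(x_1)_1=L$; moreover, for generic $I$, $LS_1+I_2=S_2$, i.e.\ $(x_1,\dots,x_{n-m})R\supseteq\mm_R^2$, whence $\Ann_R(x_1)=(x_1,\dots,x_{n-m})R=I_{n-m}\in\F$. By \ref{filtflag}(3), $R$ is G-quadratic with a Gröbner flag.

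The real work is the genericity bookkeeping. In (2) it is routine: imposing $\ell^2\in I_2$ must leave $I_2$ general enough for the two rank identities $x_1S_1+I_2=S_2$ and $LS_1+I_2=S_2$, which one checks by exhibiting a single monomial instance — e.g.\ $\ell=x_1$ and $I_2$ spanned by all quadratic monomials except $x_1x_2,\dots,x_1x_{m+1}$ — for which both hold at once, so the failure loci are proper closed subsets of the (irreducible) incidence variety $\{(I_2,\ell):\ell^2\in I_2\}$. In (1) the subtle point is that the triangle-free monomial ideal $M$ is \emph{not} a term-order (Gröbner) degeneration of the generic $I$: for any monomial order the smallest degree-$2$ monomial is a square, so no monomial order produces the triangle-free pattern, and indeed the generic such $R$ need not be G-quadratic. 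Hence one cannot appeal to the usual deformation inequality $\beta^R_{ij}(K)\le\beta^{S/\ini_\tau(I)}_{ij}(K)$, and it is essential that semicontinuity of $\beta^{S/J}_{ij}(K)$ be available over the whole flat family $U$, not merely along torus degenerations — which, happily, is elementary here thanks to the constancy of the Hilbert function on $U$.
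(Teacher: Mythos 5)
The paper itself gives no proof of this theorem; it only cites \cite{CTV} for (1) and \cite{C3} for (2), where the results are obtained by exhibiting explicit colon-ideal structures (a Koszul filtration, resp.\ a Gr\"obner flag) for the generic algebra. Your part (2) is correct and is essentially the expected argument: the intersection count of $\PP(I_2)$ with the quadratic Veronese, the identifications $0:x_1=(L)$ and $(x_1,\dots,x_{i-1}):x_i=\mm_R$ for $i\geq 2$ (both of which I checked go through, using $\mm_R^3=0$, which indeed follows from $x_1S_1+I_2=S_2$ and $x_1^2\in I_2$), and the single monomial witness on the irreducible incidence variety to certify the two open rank conditions. (You are implicitly assuming $K$ algebraically closed so that the intersection with the Veronese has a rational point; that is standard for such genericity statements but worth saying.) Your part (1) takes a genuinely different route: degeneration to a triangle-free quadratic monomial ideal inside the flat family $U$ of constant Hilbert function, with upper semicontinuity of each $\beta^{S/J}_{ij}(K)$ read off from the bar complex. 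Each individual step is correct, and your observation that the triangle-free ideal cannot be reached by a Gr\"obner degeneration of the generic $I$ (so the usual $\ini_\tau$ comparison is unavailable and the flat-family semicontinuity is genuinely needed) is a good one.

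The one point you must confront in (1) is the final quantifier. Semicontinuity gives, for each pair $(i,j)$ with $i\neq j$, a dense open $U_{ij}\subseteq U$ on which $\beta_{ij}=0$; the Koszul locus is the countable intersection $\bigcap_{i\neq j}U_{ij}$. This proves that the algebra at the scheme-theoretic generic point of $U$ (equivalently, a very general member over an uncountable field) is Koszul, and under that reading of ``generic'' your proof is complete. It does not produce a dense open set of Koszul algebras, and this is not a removable scruple here: the Roos algebras $R(a)$ recalled at the beginning of Section \ref{howK} have $h$-vector $(1,6,8)$ with $8\leq 6^2/4$, lie in your $U$ for $n=6$, and fail linearity first at homological position $a+1$ for arbitrarily large $a$, so the Koszul locus in $U$ is genuinely not open. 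A filtration-style proof such as that of \cite{CTV} sidesteps this by verifying finitely many open conditions at once (the colon identities of a Koszul filtration attached to a bipartition $n=a+b$ with $ab\geq m$ --- the same combinatorics as your triangle-free graph), and hence yields an honest dense open set. So either state explicitly that you are proving the ``very general / generic point'' version, or upgrade the degeneration to a finite open certificate of Koszulness.
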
 

For (1) see \cite[3.4]{CTV}, for (2) \cite[10]{C3}.  Fr\"oberg and L\"ofwall proved  in \cite{FL}  that, apart from spaces of quadrics of codimension $\leq n^2/4$, the only generic spaces of quadrics defining Koszul algebras are the complete intersections.   
Returning to universally Koszul algebras,  in \cite{C1} it is also proved that:

\begin{theorem}
\label{uk2} 
Let $R$ be a Cohen-Macaulay domain $K$-algebra with $K$ algebraically closed of characteristic $0$. 
Then $R$ is universally Koszul if and only if $R$ is a polynomial extension of one of the following algebras: 
\begin{itemize} 
\item[(1)] The coordinate ring of a quadric hypersurfice.
\item[(2)] The coordinate ring of a rational normal curve, i.e. $K[x,y]^{(c)}$ for some $c$.
\item[(3)] The coordinate ring of a rational normal scroll of type $(c,c)$, i.e.   the Segre product of $K[x,y]^{(c)}$ with $K[s,t]$. 
\item[(4)] The coordinate ring of  the Veronese surface in $\PP^5$, i.e. $K[x,y,z]^{(2)}$. 
\end{itemize} 
\end{theorem}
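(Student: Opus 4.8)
The plan is to reduce the problem, step by step, to a short list of very small algebras, exploiting the rigidity that the universal Koszul condition imposes on ideals of linear forms and on hyperplane sections. First I would record the two basic stability features: (a) if $R$ is universally Koszul then so is $R/(\ell)$ for a generic linear form $\ell$ (passing to a generic hyperplane section preserves condition (3) of the definition of universally Koszul, since $I:x\in\LL(R)$ descends), and (b) a polynomial extension $R[y]$ is universally Koszul iff $R$ is; so we may assume $R$ is not a polynomial extension and reduce freely modulo generic linear forms, tracking Krull dimension. This lets us concentrate on low-dimensional cases and, in the Artinian reduction, on the structure of the socle and the Hilbert function.

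Next I would pin down numerical constraints. Since universally Koszul implies Koszul, by \ref{R16}(1) the defining ideal $I$ of $R$ (in its canonical presentation $S/I$) is generated by quadrics, and by \ref{R16}(6) the Hilbert series satisfies $P^R_K(z)H_R(-z)=1$. Moreover \ref{uk2}'s condition, applied to principal ideals $I=(x)$ with $x\in R_1$, forces $0:x$ to be generated by linear forms for every $x\in R_1$; in a domain this means $0:x=0$, which is automatic, but after cutting down by generic linear forms to an Artinian algebra $\bar R$ the same mechanism forces every colon ideal $(\underline{y}):x$ to be linearly generated, which severely limits $\bar R$. The key quantitative step is to show that a universally Koszul Cohen–Macaulay domain has multiplicity at most $\dim R+1$, i.e. is of minimal or almost minimal degree: I would argue that if the $h$-vector had a third nonzero entry, one could produce a linear form $x$ and an ideal $J\in\LL(R)$ for which $J:x$ is not linearly generated, contradicting condition (3). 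This is the heart of the argument and the step I expect to be the main obstacle, since it requires extracting the bound on the $h$-vector purely from the colon-ideal condition; one likely needs a careful analysis of generic initial ideals or of the generic Artinian reduction, showing that any ``degree $3$'' syzygy behaviour obstructs the existence of the required linear colon ideal.

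Once the multiplicity bound is in place, the classification becomes geometric. Cohen–Macaulay domains of minimal degree are exactly (polynomial extensions of) quadric hypersurfaces, rational normal curves $K[x,y]^{(c)}$, rational normal scrolls, and the Veronese surface $K[x,y,z]^{(2)}$ (the classical Bertini–del Pezzo classification). Among these I would then check the universal Koszul property case by case: quadric hypersurfaces are handled directly (condition (3) is an easy computation); $K[x,y]^{(c)}$ and $K[x,y,z]^{(2)}$ are handled using the strongly Koszul / Koszul filtration results of Section \ref{howK}, since one can verify that every colon $I:x$ with $x\in R_1$ is again linearly generated in the toric coordinates, possibly after checking that the relevant generic linear forms reduce to coordinate-like elements. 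For scrolls one must rule out all types except $(c,c)$: a scroll of type $(a,b)$ with $a\ne b$ fails condition (3) because the ``short ruling'' produces a linear form $x$ for which $I:x$ picks up a generator of degree $>1$; conversely the scroll of type $(c,c)$, being the Segre product of $K[x,y]^{(c)}$ with $K[s,t]$, inherits a Koszul filtration by linear ideals from the multigraded Veronese/Segre machinery (\ref{Segremod} and the surrounding discussion), and one checks this filtration is in fact all of $\LL(R)$. Finally I would dispose of almost-minimal-degree Cohen–Macaulay domains (arithmetically Cohen–Macaulay curves and surfaces of degree $\dim R+1$) by showing they are never universally Koszul unless they already appear on the list, again by exhibiting an explicit linear form whose colon ideal is not linearly generated; here one uses that such algebras have a genuine ``second syzygy'' obstruction incompatible with condition (3). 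Assembling these cases yields exactly the four families (1)–(4) up to polynomial extension.
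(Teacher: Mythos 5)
The survey does not actually prove Theorem \ref{uk2}: it is quoted from \cite{C1}, so your sketch has to be measured against the proof given there. Your overall architecture is the right one and coincides with Conca's: show that a universally Koszul Cohen--Macaulay domain is a variety of minimal degree, invoke the Bertini--del Pezzo classification of such varieties (quadric hypersurfaces, rational normal scrolls, the Veronese surface, and cones over these), and then decide case by case which of them are universally Koszul, the answer being exactly the four families listed. The preliminary reductions you record (stability under polynomial extension, and under generic hyperplane section via \ref{Giux2}(2) combined with Bertini's theorem --- which is precisely where the hypotheses that $K$ be algebraically closed of characteristic $0$ enter) are correct and are indeed how the argument begins.

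The genuine gap is the step you yourself flag as the heart of the matter, and the route you propose for it would fail as stated. You want to pass to the generic Artinian reduction and extract the multiplicity bound from the condition that all colon ideals there are linearly generated. But Theorem \ref{uk1} of this very survey exhibits Artinian universally Koszul algebras with Hilbert series $1+nz+mz^2$ and $m\neq 0$: the Artinian universally Koszul property alone does not force the $h$-vector to be $(1,h)$, so no contradiction can be manufactured at dimension $0$. The domain hypothesis must be exploited before the last hyperplane section is taken; in \cite{C1} the decisive computation is carried out for a one-dimensional domain, comparing the lengths of $R/(x)$, $R/((x):y)$ and $R/(x,y)$ for generic linear forms $x,y$ and using that $(x):y$ is generated by linear forms. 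Your sketch contains no mechanism of this kind. Two smaller points: the bound you state, $e(R)\le\dim R+1$, is not the minimal degree condition, which reads $e(R)=1+c$ with $c=\dim_K R_1-\dim R$; and the positive verifications are not routine either, since universal Koszulness concerns \emph{arbitrary} ideals of linear forms, so for $K[x,y,z]^{(2)}$ and the balanced scrolls one cannot simply quote the strongly Koszul filtration by torus-fixed ideals but must first normalize a general ideal of linear forms using the group action before computing its colon ideals.
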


 \subsection{Absolutely Koszul}\label{AK}

  Let $\FF^R_M$ be the minimal free resolution of a graded module $M$ over $R$. One defines a $\mm_R$-filtration on $\FF^R_M$ whose associated graded complex $\lin(\FF^R_M)$ has, in the graded case,  a very elementary description. The complex  $\lin(\FF^R_M)$ is obtained from $\FF_M$ by replacing with $0$  all entries of degree $>1$  in the matrices representing the homomorphisms.  In the local case the definition of $\lin(\FF^R_M)$ is more complicated,  see Section \ref{locca}  for details.  One defines 
 \begin{equation} 
 \label{ldgr} 
 \ld_R(M)=\sup \{ i: H_i(\lin (\FF^R_M))\neq 0\}.
 \end{equation}  
 Denote by $\Omega^R_i(M)$ the $i$-th syzygy module of a module $M$ over $R$. 
  It is proved in  R\"omer  PhD thesis and also in \cite{IR} that: 
 
 \begin{proposition} 
\label{romer}
 Assume $R$ is Koszul. Then: 
 \begin{itemize}
 \item[(1)]  $M$ is componentwise linear iff $\ld_R(M)=0$.
 \item[(2)]  $\ld_R(M)=\inf\{ i: \Omega_i(M) \mbox{ is componentwise linear} \}$. 
 \item[(3)] If $\Omega^R_i(M)$ is componentwise linear then $\Omega^R_{i+1}(M)$ is componentwise linear.
 \end{itemize} 
 \end{proposition}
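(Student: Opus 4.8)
The plan is to reduce all three assertions to part (1) together with a single syzygy-shift identity for the linear defect, and then to prove (1) by d\'evissage on the number of generator degrees. First I would record that $\ld_R(\Omega^R_1(M))=\max\{\ld_R(M)-1,0\}$ whenever $\Omega^R_1(M)\neq 0$. Indeed, if $\FF^R_M\colon\cdots\to F_1\to F_0\to0$ is the minimal free resolution of $M$, with differentials $\phi_i$, then the brutal truncation $\cdots\to F_2\to F_1\to0$ is the minimal free resolution of $\Omega^R_1(M)$, and forming the linear part manifestly commutes with this truncation; hence $\lin(\FF^R_{\Omega^R_1(M)})$ is precisely the truncation of $\lin(\FF^R_M)$. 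Comparing homology gives $H_i(\lin(\FF^R_{\Omega^R_1(M)}))\cong H_{i+1}(\lin(\FF^R_M))$ for $i\geq1$, together with a short exact sequence $0\to H_1(\lin(\FF^R_M))\to H_0(\lin(\FF^R_{\Omega^R_1(M)}))\to\Image(\lin\phi_1)\to0$, and the identity follows. Iterating, $\ld_R(\Omega^R_i(M))=\max\{\ld_R(M)-i,0\}$. Granting (1) --- namely that over the Koszul ring $R$ a module is componentwise linear if and only if its linear defect is $0$ --- statements (2) and (3) are then immediate: $\Omega^R_i(M)$ is componentwise linear $\iff\ld_R(\Omega^R_i(M))=0\iff\ld_R(M)\leq i$, so $\ld_R(M)$ is the least such $i$, which is (2), while the set of such $i$ is upward closed, which is (3). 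This reduction uses nothing about $R$.

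For the implication ``componentwise linear $\Rightarrow\ld_R=0$'' I would induct on the number $s$ of distinct degrees in which $M$ is minimally generated. If $s=1$, say $M$ is generated in degree $d$, then being componentwise linear forces $M=M_{\langle d\rangle}$ to have a linear resolution, so every entry of every differential of $\FF^R_M$ has degree $1$; hence $\lin(\FF^R_M)=\FF^R_M$ is acyclic in positive homological degrees and $\ld_R(M)=0$. For $s\geq2$ put $d=t_0^R(M)$, let $N=M_{\langle<d\rangle}$ be the submodule generated by the elements of degree $<d$, and let $P=M/N$. One checks directly that $N_{\langle q\rangle}=M_{\langle q\rangle}$ for $q<d$ while $N_{\langle q\rangle}=\mm_R^{q-d+1}M_{\langle d-1\rangle}$ for $q\geq d$; since $M$ is componentwise linear, $M_{\langle d-1\rangle}$ has a linear resolution, so applying Lemma \ref{mM-reg} (valid because $R$ is Koszul) $q-d+1$ times yields $\reg_R(N_{\langle q\rangle})=q$ for every $q$. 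Thus $N$ is componentwise linear and is generated in fewer than $s$ degrees, so $\ld_R(N)=0$ by the inductive hypothesis. Moreover $P$ is generated in the single degree $d$, and the short exact sequence $0\to N\to M\to P\to0$ together with Lemma \ref{blabla}(1) and the equalities $\reg_R(N)=t_0^R(N)\leq d-1$, $\reg_R(M)=t_0^R(M)=d$ (both by Proposition \ref{pro28}, applicable since $N$ and $M$ are componentwise linear) gives $\reg_R(P)\leq d$; hence $P$ has a linear resolution and $\ld_R(P)=0$ by the case $s=1$. It then remains to glue: deduce $\ld_R(M)=0$ from $\ld_R(N)=\ld_R(P)=0$ along the short exact sequence above.

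For the converse, suppose $\ld_R(M)=0$, i.e.\ the minimal complex $\lin(\FF^R_M)$ is acyclic in positive homological degrees. Because its differentials are homogeneous of degree exactly $1$, it decomposes as $\lin(\FF^R_M)=\bigoplus_c L^{(c)}$, where the $c$-th linear strand $L^{(c)}$ has $L^{(c)}_i$ equal to the degree-$(c+i)$ summand of $F_i$; each $L^{(c)}$ is then a minimal linear complex, acyclic in positive degrees, hence the minimal free resolution of $H_0(L^{(c)})$. One must now identify $H_0(L^{(c)})$ with $M_{\langle c\rangle}$; granting this, every component $M_{\langle c\rangle}$ has a linear resolution, so $M$ is componentwise linear, completing (1).

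I expect two points to carry the real weight, and both are where the Koszul hypothesis is genuinely used. The first is the gluing step ``$\ld_R(N)=\ld_R(P)=0\Rightarrow\ld_R(M)=0$'': the sequence $0\to N\to M\to P\to0$ is exact on minimal generators but not on higher syzygies, so a horseshoe resolution of $M$ need not be minimal, and one must instead track the invariants $t_i^R$ along the sequence in the spirit of Lemma \ref{blabla}(2), strand by strand. The second is the identification $H_0(L^{(c)})\cong M_{\langle c\rangle}$, which amounts to comparing $M$ with the associated graded of its $\mm_R$-adic filtration compatibly with the componentwise decomposition; the underlying mechanism is that the $\mm_R$-adic filtration spectral sequence of $\FF^R_M$ degenerates exactly when $\ld_R(M)=0$, in which case $H_0(\lin(\FF^R_M))\cong M$. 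Both steps are carried out in R\"omer's thesis and in \cite{IR}.
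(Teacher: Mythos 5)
The paper offers no proof of this proposition at all --- it defers entirely to R\"omer's PhD thesis and \cite{IR} --- so there is no argument in the text to compare yours against; your proposal has to stand on its own. The purely formal part does: the identity $\ld_R(\Omega^R_1(M))=\max\{\ld_R(M)-1,0\}$ and the resulting reduction of (2) and (3) to (1) are correct, and the induction for ``componentwise linear $\Rightarrow\ld_R(M)=0$'' is sound modulo the gluing step. That step, which you leave as a promissory note, is in fact fillable with the degree bookkeeping you already have: since $F_i^P$ is generated in degree $d+i$ while $F_{i-1}^N$ is generated in degrees $\le \reg_R(N)+i-1\le d+i-2$, the connecting entries of the horseshoe resolution of $M$ have degree $\ge 2$; hence that resolution is automatically minimal and its linear part is exactly $\lin(\FF^R_N)\oplus\lin(\FF^R_P)$, which is acyclic in positive degrees. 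You should say this rather than defer it.

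The genuine gap is in the converse direction: the identification $H_0(L^{(c)})\cong M_{\langle c\rangle}$ is false. Take $R=K[x,y]$ and $M=(x,y^2)$. Then $M$ is componentwise linear and $\ld_R(M)=0$, with minimal resolution $0\to R(-3)\stackrel{(y^2,\,-x)}{\longrightarrow} R(-1)\oplus R(-2)\to M\to 0$; the strand $L^{(2)}$ is $R(-3)\stackrel{-x}{\longrightarrow}R(-2)$, so $H_0(L^{(2)})\cong (R/(x))(-2)$ is cyclic, whereas $M_{\langle 2\rangle}=(x,y)^2$ needs three generators. What $H_0(L^{(c)})$ actually computes is the $c$-th subquotient of the filtration of $M$ by the submodules generated in degrees $\le c$ (in the example, $(x,y^2)/(x)\cong(R/(x))(-2)$, consistent with the strand), and passing from linearity of these subquotients to linearity of each $M_{\langle c\rangle}$ requires a further induction up that filtration, using \ref{blabla} and \ref{mM-reg}, which your plan does not contain. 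Deferring exactly this point (and the gluing) to ``R\"omer's thesis and \cite{IR}'' is circular in this exercise, since those are precisely the sources the paper cites for the whole proposition.
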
 
 
 Iyengar and R\"omer introduced in \cite{IR} the following notion: 
 
\begin{definition} 
\label{abkos} 
A $K$-algebra $R$  is said to be absolutely Koszul if  $\ld_R(M)$ is finite for every module $M$. 
\end{definition}

It is shown in \cite{HI} that: 

\begin{proposition} If $\ld_R(M)$ is finite, then $\reg_R(M)$ is finite as well. Furthermore  the Poincar\'e series $P_M(z)$ of $M$ is rational and its ``denominator" only depends on $R$. 
\end{proposition}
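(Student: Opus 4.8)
The plan is to reduce to modules that are componentwise linear, for which both invariants can be computed one linear strand at a time, and then to transport the information back through the first $\ld_R(M)$ syzygies of $M$. First I would dispose of the case in which $R$ is Koszul, which is the one that matters in the sequel (if $R$ is absolutely Koszul then $\ld_R(K)<\infty$, so $R$ is Koszul by \ref{infty}); the general case follows the same outline, with \ref{romer} replaced by the spectral sequence $E^1_p=H_p(\lin(\FF^R_M))\Rightarrow M$ attached to the filtration of $\FF^R_M$ whose associated graded complex is $\lin(\FF^R_M)$, discussed in Section~\ref{locca}. So assume $R$ Koszul, set $s=\ld_R(M)$, and put $N=\Omega_s^R(M)$; by \ref{romer}(2) the module $N$ is componentwise linear.

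Next I would isolate the two facts about a componentwise linear module $N$ that drive everything. Let $q$ be the least degree with $N_q\neq 0$. Then $N_{\langle q\rangle}$ has a linear resolution, so $\reg_R(N_{\langle q\rangle})=q$, while $N/N_{\langle q\rangle}$ is again componentwise linear with one fewer nonzero strand and satisfies $t_0^R(N/N_{\langle q\rangle})\leq t_0^R(N)$. Feeding $0\to N_{\langle q\rangle}\to N\to N/N_{\langle q\rangle}\to 0$ into \ref{blabla}(1) and inducting on the number of strands gives $\reg_R(N)=t_0^R(N)$, in particular finite. I would then observe that the relation $\reg_R(N/N_{\langle q\rangle})=t_0^R(N/N_{\langle q\rangle})$ forces, for degree reasons, the vanishing of the connecting maps $\Tor_{i+1}^R(N/N_{\langle q\rangle},K)\to\Tor_i^R(N_{\langle q\rangle},K)$ (the source sits in internal degrees $>i+q$, the target only in degree $i+q$), so that $P_N^R(s,z)=P_{N_{\langle q\rangle}}^R(s,z)+P_{N/N_{\langle q\rangle}}^R(s,z)$; iterating, $P_N^R(z)=\sum_k P_{L_k}^R(z)$ with each $L_k$ having a linear resolution generated in a single degree $q_k$.

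With these in place the conclusions come out quickly. Applying \ref{blabla}(1) to $0\to\Omega_{i+1}^R(M)\to F_i\to\Omega_i^R(M)\to 0$ for $i=0,\dots,s-1$ and using $\reg_R(F_i)=t_i^R(M)$ yields $\reg_R(M)\leq\max\bigl(\{t_i^R(M)-i:0\leq i\leq s-1\}\cup\{\reg_R(N)-s\}\bigr)<\infty$. For the Poincaré series I would use that a module $L$ with a linear resolution generated in degree $q$ over a Koszul $R$ satisfies $H_L(z)=z^qH_R(z)P_L^R(-z)$ (compare Euler characteristics strand by strand), hence $P_L^R(z)=(-1)^qz^{-q}H_L(-z)/H_R(-z)$; combining this with the strand decomposition of $P_N^R$ and with $P_M^R(z)=\sum_{i=0}^{s-1}\beta_i^R(M)z^i+z^sP_N^R(z)$ (the minimal resolution of $M$ agrees with that of $N$ past homological step $s$), one sees that $P_M^R(z)H_R(-z)$ is a Laurent-polynomial combination of the series $H_{L_k}(-z)$. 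Writing $H_R(z)=h_R(z)/(1-z)^{\dim R}$ and clearing denominators shows that $P_M^R(z)$ is rational with denominator a power of $h_R(-z)$ --- the same denominator as $P_K^R(z)=H_R(-z)^{-1}$ by \ref{HilPoi} --- which depends only on $R$.

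The step I expect to be the crux is not the bookkeeping with \ref{blabla} but the passage through componentwise linear modules: that $\Omega_s^R(M)$ is componentwise linear (\ref{romer}, which I would use as a black box) and, above all, the vanishing of the connecting maps in the strand filtration. That vanishing is exactly what makes $P_N^R$ additive over strands and thereby confines the denominator of $P_M^R$ to data intrinsic to $R$; without it one would only get an inequality on Betti numbers, which is not enough to pin down the denominator.
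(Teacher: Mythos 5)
The paper itself offers no proof of this proposition---it is quoted from \cite{HI}---so your argument has to stand on its own. For a Koszul $R$ it essentially does: \ref{romer} gives that $N=\Omega_s^R(M)$ is componentwise linear, the strand-by-strand splitting of $P_N^R$ is correct (the connecting maps vanish because $\Tor_{i+1}^R(N/N_{\langle q\rangle},K)$ lives in internal degrees at least $i+q+2$ while $\Tor_i^R(N_{\langle q\rangle},K)$ is concentrated in degree $i+q$; the regularity identity you invoke there is not even needed), and the Euler characteristic identity $H_L(z)=z^qH_R(z)P_L^R(-z)$ for a module with a linear resolution correctly confines the denominator to $h_R(-z)$, which depends only on $R$. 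The one unproved ingredient in that part is the assertion that $N/N_{\langle q\rangle}$ is again componentwise linear; this is true over Koszul algebras but is a real lemma (it is in R\"omer's thesis and \cite{IR}), not a formality.

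The genuine gap is the opening reduction. The proposition carries no Koszul hypothesis on $R$, and your justification for imposing one---``if $R$ is absolutely Koszul then $\ld_R(K)<\infty$''---appeals to a hypothesis that is not in the statement either: the hypothesis concerns a single module $M$. Moreover the Koszul case is emphatically \emph{not} ``the one that matters in the sequel'': the very next statement, Corollary \ref{finite ld(k)}, deduces that $R$ is Koszul from $\ld_R(K)<\infty$ by applying this proposition to $M=K$ over a ring not yet known to be Koszul, so a proof that presupposes Koszulness is circular precisely where the proposition gets used. Your proposed repair (``the same outline with the spectral sequence'') does not go through as described, because the outline leans on \ref{romer}, and the equivalence between $\ld_R(-)=0$ and componentwise linearity is itself a Koszul-only statement. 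The fix is to bypass componentwise linearity altogether: $\ld_R(M)=s<\infty$ says that $\lin(\FF_M^R)$ is exact in homological degrees $>s$, so $\cdots\to F_{s+1}\to F_s\to C\to 0$, with $C=\CoKer(\lin(\phi_{s+1}))$, is a \emph{minimal} free resolution of $C$ with purely linear differentials and $\beta_{ij}^R(C)=\beta_{i+s,j}^R(M)$. A complex with differentials of pure degree one splits into finitely many strands, so $C$ is a finite direct sum of modules $L_k$, each generated in a single degree and having a linear resolution; your Euler characteristic computation then applies verbatim to each $L_k$, the denominator claim follows, and $\reg_R(M)\le\max\bigl(\{t_i^R(M)-i:i<s\}\cup\{\reg_R(C)-s\}\bigr)$ with $\reg_R(C)=t_0^R(C)=t_s^R(M)$ finishes the finiteness of the regularity---with no Koszul hypothesis anywhere.
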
 

One obtains the following characterization of the Koszul property: 

\begin{corollary}
\label{finite ld(k)} Let $R$ be a $K$-algebra. Then  $R$ is Koszul if and only if $\ld_R(K)$ is finite. In particular, if $R$ is absolutely Koszul then $R$ is Koszul. \end{corollary}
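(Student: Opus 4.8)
The plan is to reduce the statement to two facts already established in the excerpt: the Avramov--Eisenbud--Peeva Theorem \ref{AEP}, which asserts that $R$ is Koszul if and only if $\reg_R(K)<\infty$, and the preceding proposition (taken from \cite{HI}), which asserts that finiteness of $\ld_R(M)$ implies finiteness of $\reg_R(M)$. Neither direction should require a new idea.

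For the implication ``$R$ Koszul $\Rightarrow \ld_R(K)$ finite'' I would argue directly from the definitions: if $R$ is Koszul then the matrices describing the differentials of the minimal free resolution $\FF^R_K$ of $K$ have non-zero entries only in degree $1$, so deleting the entries of degree $>1$ changes nothing and $\lin(\FF^R_K)=\FF^R_K$. The latter complex is acyclic in positive homological degrees, hence $H_i(\lin(\FF^R_K))=0$ for all $i>0$ and $\ld_R(K)=0$. (Alternatively one can observe that $K$ is trivially componentwise linear, since $K_{\langle 0\rangle}=K$ has a linear resolution while $K_{\langle q\rangle}=0$ for $q\neq 0$, and then quote Proposition \ref{romer}(1).)

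For the converse I would assume $\ld_R(K)$ is finite, apply the preceding proposition with $M=K$ to conclude $\reg_R(K)<\infty$, and then invoke the implication (2) $\Rightarrow$ (3) of Theorem \ref{AEP} to obtain that $R$ is Koszul. Together these give the asserted equivalence. The ``in particular'' clause then follows at once: if $R$ is absolutely Koszul then $\ld_R(M)<\infty$ for every $R$-module $M$ by Definition \ref{abkos}, so in particular $\ld_R(K)<\infty$, whence $R$ is Koszul.

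I do not anticipate a real obstacle, since all the substance has been front-loaded into Theorem \ref{AEP} and the Herzog--Iyengar proposition. The two points that merit a moment's care are that the invariant $\ld_R(-)$ is defined via $\lin(\FF^R_M)$ for an arbitrary standard graded algebra, so that the hypothesis $\ld_R(K)<\infty$ in the converse is not circular; and that the use of Proposition \ref{romer} in the first direction is legitimate precisely because its standing assumption (``$R$ Koszul'') is exactly what is in force there.
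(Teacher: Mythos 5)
Your proposal is correct and follows exactly the route the paper intends: the paper presents \ref{finite ld(k)} as an immediate consequence of the preceding Herzog--Iyengar proposition (finiteness of $\ld_R(M)$ forces finiteness of $\reg_R(M)$) combined with Theorem \ref{AEP}, while the forward direction is the observation that a linear resolution of $K$ satisfies $\lin(\FF^R_K)=\FF^R_K$, so $\ld_R(K)=0$. Nothing is missing.
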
 
 
On the other hand there are Koszul algebras  that are not absolutely Koszul. 

\begin{example} The algebra  $$R=K[x_1,x_2,x_3,y_1,y_2,y_3]/(x_1 , x_2 , x_3)^2+(y_1 , y_2 , y_3)^2$$ is Koszul  but not absolutely Koszul because there are $R$-modules with non-rational Poincar\'e series. This and other examples of  ``bad" Koszul algebras are discussed by Roos in \cite{R2}.
\end{example}

One also has \cite[5.10]{HI}
\begin{theorem} 
\label{absKos}
Let $R=S/I$ with $S=K[x_1,\dots,x_n]$. Then $R$ is absolutely Koszul if either $R$ is   a complete intersection of quadrics or $\reg_S(R)=1$. 
\end{theorem}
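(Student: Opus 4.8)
The statement splits into two unrelated cases, and the plan is to treat each by locating a homological degree past which the minimal $R$-free resolution of an arbitrary finitely generated graded module $M$ becomes linear enough to force $\ld_R(M)<\infty$; since absolutely Koszul $\Rightarrow$ Koszul, no separate Koszulness argument is strictly needed, though it is convenient to record it.

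\emph{The case $\reg_S(R)\le 1$.} First I would observe that $R$ is Koszul: the defining ideal $I$ has a $2$-linear resolution over $S$, so $\Tor^S_i(R,K)$ is concentrated in internal degree $i+1$ for $i\ge1$, the ring $R$ is Golod, and feeding this into the graded form of Golod's formula for $P^R_K(s,z)$ shows that every term carries equal $s$- and $z$-exponent, i.e. $\beta^R_{ij}(K)=0$ for $i\ne j$; alternatively one invokes \eqref{HilPoi} and \cite{F}. For the linearity defect the plan is to imitate the proof of \ref{Giux2}(2): break the minimal $R$-free resolution $\FF$ of $M$ into short exact sequences, compare term by term (via \ref{blabla}(2)) with the \emph{finite} minimal $S$-free resolution of $M$, and use $\reg_S(R)\le1$ to see that all entries of degree $>1$ appearing in $\FF$ are inherited from the $S$-resolution of $M$. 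This should yield an estimate of the shape $\ld_R(M)\le\ld_S(M)$, and since the $S$-resolution is finite one has $\ld_S(M)\le\dim S<\infty$, so $R$ is absolutely Koszul. A sanity check is the extreme case $\mm_R^2=0$: there $\Omega^R_1(M)$ is annihilated by $\mm_R$, hence a direct sum of shifted copies of $K$, which is componentwise linear because $R$ is Koszul, so $\ld_R(M)\le1$.

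\emph{The case where $R=S/(f_1,\dots,f_c)$ is a complete intersection of quadrics.} Here $R$ is Koszul by \ref{R16}(3), and the plan is to show that the minimal $R$-free resolution of every $M$ is eventually strictly linear. The Eisenbud operators $\chi_1,\dots,\chi_c$ attached to the regular sequence $f_1,\dots,f_c$ have cohomological degree $2$ and, because the $f_j$ are quadrics, internal degree $2$; by Gulliksen's theorem (see \cite{T,A}) $\operatorname{Ext}^*_R(M,K)$ is a finitely generated module over $K[\chi_1,\dots,\chi_c]$, so its nonzero bigraded components lie on finitely many lines of slope one in the (homological, internal) plane. In $\beta$-language this says $t^R_i(M)\le i+D$ for some constant $D$ and all $i$. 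On the other hand, once $i$ is large enough that $\Omega^R_{i-1}(M)$ has no free direct summand — which holds for $i\gg0$ by the matrix-factorization behaviour of high syzygies over a complete intersection — minimality of the resolution forces a top-degree generator of $F_{i-1}$ to occur in a syzygy of strictly larger degree, whence $t^R_i(M)\ge t^R_{i-1}(M)+1$; equivalently $t^R_i(M)-i$ is eventually non-decreasing. Being also bounded above by $D$, it stabilizes, so $t^R_i(M)=t^R_{i-1}(M)+1$ for $i\gg0$: the resolution is eventually strictly linear. Then $\Omega^R_i(M)$ is generated in a single degree with a linear resolution for $i\gg0$, hence componentwise linear, and \ref{romer}(2) yields $\ld_R(M)<\infty$.

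I expect the main obstacles to be, in the first case, making precise the change-of-rings bookkeeping behind ``beyond the finite $S$-resolution of $M$ everything is linear'' — the clean inequality $\reg_S(R)\le1\Rightarrow\ld_R(M)\le\ld_S(M)$ requires an induction on homological degree in the spirit of \ref{Giux2}(2) — and, in the second case, justifying that high syzygies over a complete intersection carry no free summands, so that $t^R_i(M)\ge t^R_{i-1}(M)+1$ can be iterated; these two points are the technical heart of \cite{HI}.
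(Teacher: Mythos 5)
First, a point of reference: the paper does not prove this theorem itself; it quotes it from \cite[5.10]{HI}. So your attempt has to be measured against that source, and against the internal logic of the surrounding sections.

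The decisive gap is in the complete-intersection case, at the very last step. What you actually establish is that $t_i^R(M)-i$ is bounded above (Gulliksen) and eventually non-decreasing (the no-free-summand argument), hence eventually constant. This controls only the \emph{top} generating degree of the syzygies: it does not make $\Omega_i^R(M)$ generated in a single degree (for $M=K\oplus K(-5)$ every syzygy has generators in two degrees), and, more seriously, componentwise linearity is not a condition on the graded Betti table at all --- it asks that each submodule $(\Omega_i^R(M))_{\langle q\rangle}$ have a linear resolution, equivalently that $H_{>0}(\lin(\FF))$ vanish, and this depends on the differentials, not merely on the degrees of their entries. Here is the cleanest way to see that your inference cannot be valid: the two numerical inputs you use, namely $\sup_i(t_i^R(M)-i)<\infty$ and $t_{i+1}^R(M)>t_i^R(M)$ for $i>\dim R$, hold for every module over \emph{every} Koszul algebra (the first by \ref{AEP}, the second by the result of Eisenbud quoted in Section \ref{QuePro}). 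If ``$t_i^R(M)-i$ stabilizes'' implied that high syzygies are componentwise linear, then every Koszul algebra would be absolutely Koszul, contradicting the example $K[x_1,x_2,x_3,y_1,y_2,y_3]/\bigl((x_1,x_2,x_3)^2+(y_1,y_2,y_3)^2\bigr)$ given right after \ref{finite ld(k)}. The quadratic complete-intersection hypothesis must therefore enter the finiteness of $\ld_R(M)$ in a finer way than through the support of $\operatorname{Ext}_R(M,K)$, which is exactly the part of \cite{HI} you have deferred. Consistently with this, \ref{absKos1} records that in the complete-intersection case there is no uniform bound $\sup_M\ld_R(M)$, so no argument producing a bound depending only on $R$ and the shape of the Betti table could be correct.

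The case $\reg_S(R)=1$ has the same weakness. Your Koszulness argument (linear resolution of $I$ $\Rightarrow$ Golod $\Rightarrow$ diagonal Poincar\'e series) is fine, but the inequality $\ld_R(M)\le\ld_S(M)$ that you propose to extract by ``imitating \ref{Giux2}(2)'' is not justified: the induction behind \ref{Giux2}(2) bounds the numbers $t_i^R(M)$, i.e.\ regularity, and finiteness of $\reg_R(M)$ never suffices for finiteness of $\ld_R(M)$ (same counterexample as above). In \cite{HI} this half is proved by using the Golod property structurally --- one controls the linear part of the minimal resolution assembled from the Golod data --- which is how the bound $\ld_R(M)\le 2\dim R$ of \ref{absKos1} arises; your proposed bound $\ld_S(M)\le\dim S$ would not reproduce it. In summary: the reduction to showing that some syzygy is componentwise linear (via \ref{romer}), the Koszulness of $R$ in both cases, and the eventual strict growth of $t_i^R(M)$ are all correct, but the passage from degree bounds on Betti numbers to acyclicity of $\lin(\FF^R_M)$ is missing in both halves, and that passage is the theorem.
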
 
There is however an important difference between the two cases  \cite[6.2, 6.7]{HI}: 
\begin{remark} 
\label{absKos1}
When $\reg_S(R)=1$ one has $\ld_R(M)\leq 2\dim R$ for every $M$ and even $\ld_R(M)\leq \dim R$ is furthermore $R$ is Cohen-Macaulay.   But when $R$ if a complete intersection of quadrics  of codimension $>1$ (or more generally when $R$ is Gorenstein of with socle in degree $>1$) one has $\sup_M  \ld_R(M)=\infty$.\end{remark} 

Another important contribution is the following:  
\begin{theorem}\label{1nn1} Let $R$ be a Gorenstein Artinian algebra with Hilbert function $1+nz+nz^2+z^3$ and  $n>2$. Then 
\begin{itemize} 
\item[(1)] If  there exist $x,y\in R_1$ such that $0:x=(y)$  and $0:y=(x)$  (an exact pair of zero-divisors in the terminology of \cite{HS}) then $R$ has a Koszul filtration and it is absolutely Koszul.  
\item[(2)] If $R$ is generic then it has an exact pair of zero-divisors.
\end{itemize} 
\end{theorem}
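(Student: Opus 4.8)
The plan is to prove (1) and (2) separately: (2) is a genericity argument producing an exact pair of zero-divisors via Macaulay's inverse systems, and (1) extracts from such a pair enough rigidity to build a Koszul filtration and to run the absolute-Koszulness machinery through the short Gorenstein quotient $R/(x)$.

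\emph{Part (1).} Let $x,y\in R_1$ with $0:_Rx=(y)$ and $0:_Ry=(x)$; note first that $xy=0$ since $y\in 0:_Rx$. The opening step is a batch of Hilbert-function computations exploiting that $R$ is Gorenstein with one-dimensional socle in degree $3$, so the pairings $R_i\times R_{3-i}\to R_3\cong K$ are perfect: for $s\in R_1$ one has $sx=0\iff sx\cdot R_1=0\iff s\in(xR_1)^\perp$, hence $(xR_1)^\perp=(y)_1$ and $\dim_K xR_1=\dim_K yR_1=n-1$, $\dim_K(xR_1\cap yR_1)=n-2$. Feeding this into the exact sequences $0\to(0:_Rx)(-1)\to R(-1)\xrightarrow{\ x\ }R\to R/(x)\to 0$ and its $y$-analogue, and using that $0:_Rx$ and $0:_Ry$ are principal, yields $H_{R/(x)}=H_{R/(y)}=1+(n-1)z+z^2$ and $H_{R/(x,y)}=1+(n-2)z$, i.e.\ $\mm_{R/(x,y)}^2=0$. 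Matlis duality over the Gorenstein ring $R$ identifies $(R/(x))^\vee$ with $(0:_Rx)=(y)\cong R/(0:_Ry)=R/(x)$ (up to shift, as $R/(x)$-modules, since $xy=0$), so $R/(x)$ and $R/(y)$ are themselves Gorenstein Artinian with $\mm^3=0$. Finally, principality of the two annihilators makes the minimal graded free $R$-resolution of $R/(x)$ the $2$-periodic complex $\cdots\to R(-2)\xrightarrow{\ y\ }R(-1)\xrightarrow{\ x\ }R\to R/(x)\to 0$; it is linear, whence $\reg_R(R/(x))=0$ and, the linearization being the complex itself, $\ld_R(R/(x))=0$ (symmetrically for $R/(y)$).

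Next I would exhibit a Koszul filtration $\F$ of $R$. The seed $\{0,(x),(y)\}$ is already ``self-closed'': for $I=(x)$ take $J=0$ with $0:(x)=(y)\in\F$, symmetrically for $(y)$. To climb to $\mm_R$, complete a flag $(x)\subset(x,y)\subset(x,y,\ell_1)\subset\cdots\subset\mm_R$ by adjoining linear forms one at a time; here $\mm_{R/(x,y)}^2=0$ does the work, since for any two linearly generated ideals $(x,y)\subseteq J\subsetneq I$ it forces $J:I=\mm_R$, so every such step is admissible once $\mm_R\in\F$. The one nonformal passage is $(x)\subset(x,y)$, whose colon is $(x):y$; this ideal and $(y):x$ must be adjoined to $\F$, and one checks from the Hilbert-function data above that they are linearly generated and admit admissible predecessors. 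Then Lemma \ref{filtflag} gives that $R$ is Koszul and $\reg_R(R/I)=0$ for every $I\in\F$. For absolute Koszulness: $R/(x)$ is a short Gorenstein algebra, hence absolutely Koszul (this follows from the analysis of free resolutions over such rings in \cite{HS}), and $\ld_R(R/(x))=0$ by the step above. Given an arbitrary $R$-module $M$, the exact sequences from multiplication by $x$ and $y$ read $0\to(0:_Mx)\to M\xrightarrow{\ x\ }xM(1)\to 0$ and $0\to xM(1)\to M(1)\to(M/xM)(1)\to 0$, where $M/xM$ is an $R/(x)$-module, $xM\cong(M/(0:_Mx))(-1)$, and $(0:_Mx)\supseteq yM$ with $yM$ an $R/(x)$-module (as $xy=0$). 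Combining this with $\ld_R(R/(x))=\ld_R(R/(y))=0$, absolute Koszulness of $R/(x)$, the behaviour of $\ld$ under short exact sequences, and a change-of-rings inequality for the linear defect along $R\to R/(x)$, one shows $\Omega_R^i(M)$ becomes componentwise linear for $i\gg0$, i.e.\ $\ld_R(M)<\infty$. This transfer across a quotient by a zero-divisor — rather than a regular element, where the standard change-of-rings lemmas apply verbatim — is the main technical obstacle of (1), and it is exactly here that the $2$-periodicity supplied by the exact pair is indispensable.

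\emph{Part (2).} By Macaulay's inverse systems write $R=S/\Ann(F)$ with $S=K[x_1,\dots,x_n]$ and $F$ a cubic form in the dual variables; genericity of $R$ among Gorenstein Artinian algebras with Hilbert series $1+nz+nz^2+z^3$ amounts to genericity of $F$. For linear forms $\ell,\ell'$ one has $\ell'\in\Ann(\ell\circ F)_1\iff(\ell\ell')\circ F=0$, a condition symmetric in $\ell$ and $\ell'$. I would consider the incidence variety $X=\{(\ell,\ell'):(\ell\ell')\circ F=0\}\subseteq\PP(S_1)\times\PP(S_1)$: it is cut out by $n$ bilinear forms, so $\dim X\geq n-2>0$; for generic $F$ it is disjoint from the diagonal (no $\ell$ has $\ell^2\in\Ann(F)$), both projections land in the Hessian hypersurface $H_F=\{\ell:\ell\circ F\ \text{degenerate}\}$, and they are dominant with finite general fibre because for generic $F$ the general point of $H_F$ is a quadric $\ell\circ F$ of corank exactly $1$. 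Choosing $(\ell,\ell')$ general in $X$, both $\ell\circ F$ and $\ell'\circ F$ have corank exactly $1$, with radicals $\langle\ell'\rangle$ and $\langle\ell\rangle$. Put $x=\bar\ell$, $y=\bar{\ell'}\in R_1$. The inverse-systems dictionary gives $0:_Rx=\Ann(\ell\circ F)/\Ann(F)$, and a Hilbert-function count (the corank-$1$ quadric $\ell\circ F$ defines the Gorenstein algebra $S/\Ann(\ell\circ F)$ with Hilbert series $1+(n-1)z+z^2$) forces $\Ann(\ell\circ F)=\Ann(F)+(\ell')$; hence $0:_Rx=(y)$, and symmetrically $0:_Ry=(x)$. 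The delicate point here is the genericity input that the general point of $H_F$ has corank exactly $1$ — equivalently, that the ``conjugation'' $\ell\mapsto\ell'$ is a well-defined, general-point-preserving rational self-map of $H_F$ — which can fail in small positive characteristic and is presumably why the clean statement lives in characteristic $0$.
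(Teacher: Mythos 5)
The paper does not actually prove this theorem: it only refers to \cite[2.13, 6.3]{CRV} for the Koszul filtration in (1) and for (2), and to \cite[3.3]{HS} for the absolute Koszulness, so your proposal has to be judged against those sources. A good deal of what you write is correct and does match their strategy: the perfect-pairing computations giving $H_{R/(x)}=1+(n-1)z+z^2$ and $\mm_{R/(x,y)}^2=0$ (note these implicitly assume $x,y$ linearly independent), the two-periodic linear resolution of $R/(x)$, the Gorensteinness of $R/(x)$ via duality, and the inverse-systems reformulation of (2) are all sound. The Koszul filtration sketch also closes up: for any $w\in R_1\setminus (x)_1$ the colon $(x):w$ is generated by $n-1$ linear forms containing $x$ (your own pairing argument shows $((x):w)_1R_1=R_2$ exactly because $n>2$), every $(x,u)$ with $u\notin(x)_1$ contains $\mm_R^2$, and taking $\F$ to consist of $0$ together with all ideals $(x)+(U)$ and $(y)+(U)$ for subspaces $U\subseteq R_1$ makes the cascade of colon ideals terminate.

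There are, however, two genuine gaps. First, the absolute Koszulness in (1) is not proved. You appeal to ``the behaviour of $\ld$ under short exact sequences and a change-of-rings inequality for the linear defect along $R\to R/(x)$,'' but no such inequality is available for a quotient by a zero-divisor; establishing this transfer is precisely the content of \cite[3.3]{HS}, which builds explicit minimal free resolutions over $R$ out of resolutions over the short Gorenstein ring $R/(x)$ by exploiting the exact zero-divisor, and which in turn feeds on the nontrivial input (from \cite{AIS1}) that Gorenstein rings with $\mm^3=0$ are absolutely Koszul --- a fact that does not follow from Theorem \ref{absKos}, since such rings have regularity $2$ over $S$. Listing the lemmas one would need is not the same as having them, and this is the hardest part of the statement. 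Second, in (2) everything hinges on the claim that for generic $F$ the general degenerate contraction $\ell\circ F$ has corank exactly one, with the induced involution $\ell\mapsto\ell'$ carrying a general point of the incidence variety to a general point; you flag this yourself but give no argument. The standard way to secure it is to exhibit one explicit cubic $F$ admitting an exact pair of zero-divisors and conclude by semicontinuity; that explicit verification is the actual content of \cite[6.3]{CRV} and is absent here.
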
 
See \cite[2.13,6.3]{CRV} for the statement on Koszul filtration in (1) and for (2) and see \cite[3.3]{HS} for the the absolutely Koszulness. 

What are the relationships between the properties  discussed in this and the  earlier sections? Here are some questions: 
 \begin{question}
\label{q8} 
\begin{itemize} 
\item[(1)] Strongly Koszul $\Rightarrow$ G-quadratic?
\item[(2)] Universally  Koszul $\Rightarrow$ G-quadratic?
\item[(3)] Universally  Koszul $\Rightarrow$ absolutely Koszul?
\end{itemize} 
\end{question}
Question \ref{q8} (1) is mentioned in \cite[pg.166]{HHR} in the toric setting.  
 Another interesting question is:  

\begin{question}
\label{q9} 
What is the behavior of  absolutely Koszul algebras under standard algebra operations  (e.g.~forming Veronese subalgebras or Segre and fiber  products)? 
\end{question}
The same question for universally Koszul algebras is discussed in \cite{C1} and for strongly Koszul in \cite{HHR}. Note however that in  \cite{HHR} 
 the authors deal mainly with toric algebras and their toric coordinates.  Universally Koszul algebras with monomial relations have been classified in \cite{C2}. We may ask
 \begin{question}
\label{q10} 
Is it possible to classify absolutely Koszul algebras defined by monomials? 
\end{question}

\section{Regularity and Koszulness}\label{QuePro}

We list in this section some facts and some questions that we like concerning Koszul algebras and regularity. 
 We observe the following. 
\begin{remark} 
\label{IMS}
Regularity over the polynomial ring $S$ behaves quite well with respect to products of ideals and modules. 
\begin{itemize} 
\item[(1)] $\reg_S(I^uM)$ is a linear function in $u$ for large $u$, see \cite{CHT,K,TW}. 
\item[(2)] $\reg_S(IM)\leq \reg_S(M)+\reg_S(I)$ (does not hold in general but it) holds provided $\dim S/I\leq 1$, \cite{CH}. 
\item[(3)] More generally, $$\reg_S (\Tor_i^S(N,M))\leq \reg_S (M)+\reg_S (N)+i$$ provided the Krull dimension of  $\Tor_1^S(N,M)$ is $\leq 1$, \cite{Ca1,EHU}. 
\item[(4)] $\reg_S(I_1\cdots I_d)=d$ for ideals $I_i$ generated in degree $1$, \cite{CH}
\end{itemize} 
where $M,N$ are $S$-modules and $I,I_i$ are ideals of $S$. 
\end{remark}
What happens if we replace in \ref{IMS} the polynomial ring $S$ with a Koszul algebra $R$ and  consider regularity over $R$? 
 Trung and Wang proved in \cite{TW} that  $\reg_S(I^uM)$ is asymptotically a linear function in $u$ when $I$ is an ideal of  $R$ and $M$ is a $R$-module. If $R$ is Koszul,  $\reg_R(I^uM)\leq \reg_S(I^uM)$ and hence $\reg_R(I^uM)$ is bounded above by a linear function in $u$. 

\begin{question}\label{q11} Let $R$ be a Koszul algebra  $I\subset R$ an ideal and $M$ an $R$-module.  Is $\reg_R(I^uM)$ a linear function in $u$ for large $u$? 
\end{question} 

The following examples show that statements (2) and (3)   in   \ref{IMS}  do not hold over Koszul algebras. 

\begin{example}\label{exIMS} Let $R=\QQ[x,y,z,t]/(x^2+y^2, z^2+t^2)$. With $I=(x,z)$ and $J=(y,t)$ one has $\reg_R(I)=1$, $\reg_R(J)=1$ because $x,z$ and $y,t$ are regular sequences on $R$, $\dim R/I=0$  and $\reg_R(IJ)=3$.
\end{example} 

\begin{example} Let $R=K[x,y]/(x^2+y^2)$.  Let  $M=R/(x)$ and $N=R/(y)$ and note that $\reg_R(M)=0$, $\reg_R(N)=0$ because $x$ and $y$ are non-zero divisors in $R$ while $\Tor_1^R(M,N)=H_1(x,y,R)=K(-2)$. 
\end{example}
 
Nevertheless statements (2),(3) of \ref{IMS} might hold for special type of ideals/modules over  special type of Koszul algebras. For example one has: 

\begin{proposition} 
\label{scarti1}
Let $R$ be a Cohen-Macaulay $K$-algebra with $\reg_S(R)=1$, let $I$ be an ideal generated in degree $1$ such that $\dim R/I\leq 1$ and $M$ an $R$-module.  Then  $\reg_R(IM)\leq \reg_R(M)+1$. In particular, $\reg_R(I)=1$. 
\end{proposition}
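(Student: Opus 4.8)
The plan is to reduce the statement about $\reg_R(IM)$ over $R$ to a statement about regularity over the polynomial ring $S$, where the analogous product inequality (Remark \ref{IMS}(2)) is available. Write $R=S/J$ with $S=K[x_1,\dots,x_n]$ the canonical presentation; the hypothesis $\reg_S(R)=1$ means $J$ is generated by quadrics and, more importantly, that $R$ has a $2$-linear $S$-resolution. Since $\reg_S(R)=1$, Theorem \ref{absKos} (or directly Remark \ref{absKos1}) tells us that $R$ is Koszul, so Proposition \ref{pro28} applies and $\reg_R(N)\leq\reg_S(N)$ for every $R$-module $N$; in particular it suffices to bound $\reg_S(IM)$. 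By Proposition \ref{Giux2}(1) applied with $A=S$ and $B=R$ one has $\reg_S(N)\leq\reg_R(N)+\reg_S(R)=\reg_R(N)+1$ for any $R$-module $N$. Combining these, the problem becomes: show $\reg_S(IM)\leq\reg_R(M)+1$, and then $\reg_R(IM)\leq\reg_S(IM)\leq\reg_R(M)+1$ would \emph{almost} give the claim, except we want $\reg_R(M)+1$ on the nose, so we must be a little more careful and feed the right module into \ref{IMS}(2).

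The key step is to exploit that $\dim R/I\leq 1$ forces $\dim S/(J+\widetilde I)\leq 1$ where $\widetilde I\subset S$ is the preimage of $I$, i.e. $I$ lifts to an ideal of $S$ of small quotient dimension, which is exactly the hypothesis needed in Remark \ref{IMS}(2). Concretely I would choose linear forms $\ell_1,\dots,\ell_r\in S_1$ whose images generate $I$, let $I'=(\ell_1,\dots,\ell_r)\subset S$, and regard $M$ as an $S$-module. Then $IM$ as an $R$-module coincides with $I'M$ as an $S$-module (the $R$-module structure on $M$ factors the $S$-action, and multiplication by the $\ell_i$ agrees). Since $\dim S/I'=\dim R/I + \text{(number of variables killed by $J$ that are not in $I'$)}$... this needs care: actually $\dim S/(I'+J)=\dim R/I\leq 1$, but $\dim S/I'$ itself can be large. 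So the clean route is: apply \ref{IMS}(3) or a product bound over $S$ to the pair $(S/I', M)$ only after noting $\Tor$-dimension conditions, OR — better — use that $I'M$ fits in the exact sequence $0\to I'M\to M\to M/I'M\to 0$ of $S$-modules and bound $\reg_S(I'M)$ via \ref{blabla}(1) by $\max\{\reg_S M,\ \reg_S(M/I'M)+1\}$. Now $M/I'M=M/IM$ is an $R/I$-module, and $R/I$ has Krull dimension $\leq 1$; combined with $\reg_S(R)=1$ and the Cohen--Macaulay hypothesis one should get $\reg_S(M/IM)\leq\reg_S(M)$, perhaps after passing to the canonical presentation of $R/I$ and invoking that a module over a $1$-dimensional Cohen--Macaulay ring with $2$-linear resolution has regularity controlled by its generator degrees plus a bounded error.

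The main obstacle I anticipate is the bookkeeping that turns $\dim R/I\leq 1$ and "$\reg_S(R)=1$, $R$ Cohen--Macaulay" into the genuine input of Remark \ref{IMS}(2), namely a dimension-$\leq 1$ condition for the relevant $\Tor$ or quotient \emph{over $S$}. The subtlety is that $\reg_S$ of the product $I'M$ over $S$ is governed by \ref{IMS}(2)/(3) only when $\dim S/I' \le 1$ or $\dim\Tor_1^S(S/I',M)\le 1$, and neither is literally our hypothesis; one must instead work relative to $R$, using that $R$ is itself close to a polynomial ring ($\reg_S R=1$). I expect the cleanest fix is: (i) prove $\reg_R(IM)\le\reg_R(M)+1$ directly by the short exact sequence $0\to IM\to M\to M/IM\to 0$ over $R$ and \ref{blabla}(1), reducing to $\reg_R(M/IM)\le\reg_R(M)$; (ii) observe $M/IM$ is an $R/I$-module, and since $\dim R/I\le1$ and $R/I$ inherits $\reg_S(R/I)\le 1$ with a Cohen--Macaulay-type property, one has $\reg_{R/I}(M/IM)\le\reg_R(M)$; (iii) transfer back via \ref{Giux2}(1), using that $\reg_{R}(R/I)\le 1$ (which follows since $I$ is linearly generated and, for such rings, $R/I$ has a linear $R$-resolution — this is where $\reg_S R=1$ and $\dim R/I\le 1$ and Cohen--Macaulayness are really used). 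The final statement $\reg_R(I)=1$ is the special case $M=R$. I would write up (ii) as the technical heart, likely citing or adapting the $1$-dimensional case of \cite{CH} transported to $R$ via Proposition \ref{pro28} and \ref{Giux2}.
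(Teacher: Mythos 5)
Your first reduction is exactly the paper's: the exact sequence $0\to IM\to M\to M/IM\to 0$ together with \ref{blabla}(1) reduces everything to showing $\reg_R(M/IM)\le\reg_R(M)$. The gap is in how you propose to prove that inequality. In step (iii) you invoke \ref{Giux2}(1) with $B=R/I$, which gives $\reg_R(M/IM)\le\reg_{R/I}(M/IM)+\reg_R(R/I)$; with the bound $\reg_R(R/I)\le 1$ that you propose to use, this only yields $\reg_R(IM)\le\reg_R(M)+2$. To close the argument you would need $\reg_R(R/I)=0$, i.e.\ that $R/I$ has a linear $R$-resolution --- but that is precisely the assertion $\reg_R(I)=1$, the case $M=R$ of the proposition, so the outline is circular. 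Step (ii) has a second problem: $R/I$ need not be Cohen--Macaulay, and there is no reason for $R/I$ to have regularity $1$ over its own polynomial presentation, because the linear forms generating $I$ need not form a regular sequence on $R$; so the ``inherits'' claim is not available.

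The paper repairs exactly this point by working not with $R/I$ but with $A=R/J$, where $J\subset I$ is generated by a maximal regular sequence of linear forms (assuming $K$ infinite). This choice buys three things at once: $\reg_R(A)=0$ comes for free (the Koszul complex on a regular sequence of linear forms is a linear $R$-resolution of $A$), so \ref{Giux2}(1) gives $\reg_R(M/IM)\le\reg_A(M/IM)$ with no error term; $A$ is Cohen--Macaulay of dimension $\le 1$ and still has regularity $1$ over the polynomial ring projecting onto it, so Lemma \ref{a1} forces $\reg_A(N)=\max\{t_0^A(N),\,t_1^A(N)-1\}$ for every $A$-module $N$, reducing the problem to a bound on $t_1^A(M/IM)$; and that bound follows from the sequence $0\to IM/JM\to M/JM\to M/IM\to 0$ together with the right-exactness of $-\otimes_R A$, which controls $t_1^A(M/JM)$ by $t_1^R(M)\le\reg_R(M)+1$. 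Your earlier idea of routing the argument through $\reg_S$ and \ref{IMS}(2) cannot give the sharp bound either, as you suspected: each transfer between $\reg_R$ and $\reg_S$ costs one unit, since one only has $\reg_R(N)\le\reg_S(N)\le\reg_R(N)+1$.
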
 
\begin{proof} We may assume $K$ is infinite.  The short exact sequence  
$$0\to IM\to M\to M/IM\to 0$$ 
implies that $\reg_R(IM)\leq \max\{\reg_R(M), \reg_R(M/IM)+1\}$. It is therefore enough to prove that $\reg_R(M/IM)\leq \reg_R(M)$. Then let $J\subset I$ be an ideal generated by a maximal regular sequence of elements of degree $1$ and set $A=R/J$. Since $\reg_R(A)=0$ and since $M/IM$ is an  $A$-module, by virtue of \ref{Giux2}, we have $\reg_R(M/IM)\leq \reg_A(M/IM)$. By construction, $A$ is Cohen-Macaulay of dimension $\leq 1$ and has regularity $1$ over the polynomial ring projecting onto it. So, by \ref{a1} we have  $\reg_A(M/IM)=\max\{ t_0^A(M/IM), t_1^A(M/IM)-1\}$. Summing up, since $t_0^A(M/IM)=t_0^R(M)$, it is enough to prove  $t_1^A(M/IM)\leq \reg_R(M)+1$.  Now we look at
$$0\to IM/JM\to M/JM\to M/IM\to 0$$ 
that gives $t_1^A(M/IM)\leq \max\{ t_1^A(M/JM), t_0^A(IM/JM)\}$. Being $t_0^A(IM/JM)\leq t_0^R(M)+1\leq \reg_R(M)+1$, it remains to prove that 
$t_1^A(M/JM)\leq  \reg_R(M)+1$, and this follows from the right exactness of the tensor product. 
\end{proof} 

The following example shows that the assumption $\dim R/I\leq 1$ in \ref{scarti1} is essential. 

\begin{example}  The algebra $R=K[x,y,z,t]/(xy,yz,zt)$ is Cohen-Macaulay of dimension $2$ and $\reg_S(R)=1$. The ideal $I=(y-z)$ has $\reg_R(I)=2$ and $\dim R/I=2$. 
\end{example} 

Example \ref{exIMS} shows also that statement (4) of \ref{IMS} does not hold over a Koszul algebra even if  we assume that each $I_i$ is an ideal of regularity $1$ and of finite projective dimension.  Statement (4) of \ref{IMS}  might be true if one assumes that the ideals $I_i$ belongs to a Koszul filtration. We give a couple of examples in this direction: 

\begin{proposition} 
\label{prodstrKo}
Let $S=K[x_1,\dots,x_n]$, $R=S/I$ with $I$ generated by monomials of degree $2$. Let $X=\{\bar x_1, \dots,\bar x_n\}$ and $\F=\{ (Y) : Y\subset X\}$. Let $I_1,\dots,I_d\in \F$. Then $\reg_R(I_1\cdots I_d)=d$ unless $I_1\cdots I_d=0$. 
\end{proposition}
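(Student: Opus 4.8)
The plan is to induct on $d$, the base case $d=1$ being the statement that each $I_j \in \F$ has $\reg_R(I_j)=1$ (unless $I_j=0$), which follows from \ref{filtflag}(1) applied to the Koszul filtration $\F$ of the strongly Koszul algebra $R=S/I$ (Theorem \ref{monKos}), together with the short exact sequence $0\to I_j\to R\to R/I_j\to 0$ and \ref{blabla}(1). So assume $d\geq 2$ and set $J=I_2\cdots I_d$, so that $\reg_R(J)=d-1$ by induction (we may assume $J\neq 0$, else the product is zero). Writing $I_1=(Y)$ with $Y=\{\bar x_{i_1},\dots,\bar x_{i_v}\}\subseteq X$, I would prove $\reg_R(I_1 J)\leq d$ by exhibiting $I_1 J$ as having ``linear quotients with respect to $\F$'' in the sense of the definition preceding \ref{facile}, so that \ref{facile} gives $\reg_R(I_1 J)=t_0^R(I_1 J)=d$; the lower bound $\reg_R(I_1 J)\geq d$ is automatic because $I_1 J$ is generated in degree $d$ and is nonzero.

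The heart of the argument is the combinatorial/colon computation. Since $I$ is generated by quadratic monomials, $R$ is a quotient of a polynomial ring by a monomial ideal, and the key tool is Fact (1) from the proof of \ref{sKos}: colons of monomial ideals by monomials are again monomial and computed by the $m_i/\gcd(m_i,m)$ formula. Order the monomial generators $m_1,\dots,m_w$ of $I_1 J$ (the nonzero residues of products $\bar x_i \cdot (\text{generator of }J)$ with $i\in\{i_1,\dots,i_v\}$) in a suitable order — for instance, grouping by which variable of $Y$ is used and inducting on the generators of $J$ — and show that for each $k$, the colon ideal $\langle m_1,\dots,m_{k-1}\rangle :_R m_k$ is generated by a subset of $X$, hence lies in $\F$. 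Concretely, passing to $S$, one computes $(I + (\text{earlier monomials})) :_S (\text{lift of }m_k)$, observes by Fact (1) that it equals $I + H$ with $H$ generated by monomials, and — this is the crux — checks that $H$ can be taken generated in degree $1$, i.e. by variables, because each earlier generator shares all but one variable slot with $m_k$ by the way the ordering was chosen. This is exactly the mechanism already used in the proof of \ref{sKos} to show $R^{(c)}$ is strongly Koszul, and I would mirror that bookkeeping here, now for products of several ideals rather than a single colon.

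The main obstacle I anticipate is organizing the ordering of the generators of $I_1 J$ so that the linear-quotients property genuinely holds at every step: unlike the single-colon situation in \ref{sKos}, here the generators of $J$ itself may have degree $d-1$ and overlap with one another in complicated ways, so a naive lexicographic order on the $m_k$ need not produce colons generated in degree $1$. I expect the right move is a nested induction — outer induction on $d$ as above, and within the inductive step an induction on the number of generators of $J$, using at each stage that $J' \subsetneq J$ obtained by dropping one generator still lies in (the module-analogue of) a filtration compatible with $\F$, together with a short exact sequence relating $I_1 J$, $I_1 J'$, and a cyclic quotient, to which \ref{blabla}(1) and \ref{Giux2} apply. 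Verifying that the cyclic quotients appearing are annihilated by ideals in $\F$ — equivalently that the relevant colons land in $\F$ — is precisely where Fact (1) and the quadratic-monomial hypothesis on $I$ are used, and where the argument could become delicate; but since everything in sight is monomial and $\F$ consists of all ideals generated by subsets of $X$, I expect it to go through.
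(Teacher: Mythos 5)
Your overall framework coincides with the paper's: reduce to showing that $I_1\cdots I_d$ has linear quotients with respect to the filtration $\F$, apply \ref{facile} to get $\reg_R(I_1\cdots I_d)=t_0^R(I_1\cdots I_d)=d$, and use Fact (1) from the proof of \ref{sKos} to transfer colon computations from $S$ down to $R$. The difference is where the combinatorial content lives. The paper lifts the entire product to $S$, invokes the known theorem that products of ideals generated by variables have linear quotients in the polynomial ring (\cite[5.4]{CH}), and then observes that Fact (1) pushes a linear-quotients ordering in $S$ down to a linear-quotients ordering with respect to $\F$ in $R$.

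Your proposal never establishes that combinatorial fact, and this is a genuine gap. You correctly identify it as the crux (``checks that $H$ can be taken generated in degree $1$''), and you correctly observe that a naive ordering of the generators need not work, but the nested induction you sketch (on $d$ and on the number of generators of $J$) is not carried out; ``I expect it to go through'' is doing all the work at exactly the delicate point. The existence of a linear-quotients ordering for a product $(Y_1)\cdots(Y_d)$ of ideals of variables is a real theorem about transversal polymatroidal ideals --- the ordering that works is a (reverse-)lexicographic one on the minimal monomial generators, and verifying that each colon is generated by variables requires an exchange argument on exponent vectors, not just the $m_i/\gcd(m_i,m)$ formula. Until that is supplied, by proof or by citation to \cite[5.4]{CH} as the paper does, the argument is incomplete. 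Note also that your outer induction on $d$, giving $\reg_R(I_2\cdots I_d)=d-1$, ends up unused: once you commit to proving linear quotients of the full product, the inductive bound on $\reg_R(J)$ buys you nothing, since \ref{facile} needs control of the colon ideal at every generator rather than a regularity estimate for $J$.
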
 
\begin{proof} First we observe the following. Let  $m_1,\dots,m_t$ be  monomials of degree $d$ and $J=(m_1,\dots,m_t)$. Assume that they have linear quotients (in $S$), that is $(m_1,\dots, m_{i-1}):_Sm_i$ is generated by variables for every $i$. Fact (1) in the  proof of \ref{sKos}  implies  that $JR$ has  linear quotients with respect to the Koszul filtration $\F$ of $R$.  By \ref{facile} we have that  $\reg_R(JR)=d$ (unless $JR=0$).  Now the desired result follows because products of ideals of variables  have linear quotients in $S$ by  \cite[5.4]{CH}.  \end{proof} 

Example 4.3 in \cite{CH} shows that the inequality $\reg_R(IM)\leq \reg_R(M)+\reg_R(I)$ does not even hold over a $K$-algebra  $R$  with a Koszul filtration $\F$, $I\in \F$ and $M$ an $R$-module  with  linear quotient with respect to $\F$. 
The following are natural questions: 

\begin{question}\label{q12}
Let $R$ be an algebra with a Koszul filtration $\F$. Is it true that  $\reg_R(I_1\cdots I_d)=d$ for every $I_1,\dots,I_d\in \F$ whenever the product is non-zero?  
\end{question}

In view of the analogy with statement (4) of \ref{IMS} the following special case deserves attention: 

\begin{question}\label{q12bis}
Let $R$ be a universally Koszul  algebra.  Is it true that  $\reg_R(I_1\cdots I_d)=d$ for every $I_1,\dots,I_d$ ideals of $R$  generated in degree $1$ (whenever the product is non-zero)?  
\end{question}

\begin{remark} 
\label{propri}
In a universally Koszul algebra a product of elements of degree $1$ has a linear annihilator. 
This can be easily shown by induction on the number of factors.  Hence the answer to \ref{q12bis} is positive if each $I_i$ is principal. 
\end{remark}

We are able to answer \ref{q12} in the following cases: 

\begin{theorem}
\label{scarti2}
Products of ideals of linear forms have  linear resolutions over the following rings: 
\begin{itemize}
\item[(1)] $R$ is Cohen-Macaulay with $\dim R\leq 1$ and $\reg_S(R)=1$.
\item[(2)] $R=K[x,y,z]/(q)$ with $\deg q=2$.
\item[(3)] $R=K[x,y]^{(c)}$ with $c\in \NN_{>0}$. 
\item[(4)] $R=K[x,y,z]^{(2)}$.
\item[(5)] $R=K[x,y]*K[s,t]$ ($*$ denotes the Segre product). 
\end{itemize} 
\end{theorem}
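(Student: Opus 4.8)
The plan is to prove the upper bound $\reg_R(I_1\cdots I_d)\le d$; the reverse inequality is automatic, since a nonzero product of $d$ ideals generated in degree $1$ is generated in degree $d$, so $t_0^R(I_1\cdots I_d)=d\le\reg_R(I_1\cdots I_d)$. We may assume $K$ algebraically closed, as the graded Betti numbers of a module over $R$ and over $S$ are unaffected by field extension. The feature common to the five rings is that $R$ is Cohen--Macaulay with $\reg_S(R)=1$: this is the hypothesis in $(1)$; it is classical for a quadric hypersurface in $(2)$; and in $(3)$, $(4)$, $(5)$ the ring $R$ is the coordinate ring of a variety of minimal degree (the rational normal curve, the Veronese surface in $\PP^5$, the quadric surface $\PP^1\times\PP^1$), whence $\reg_S(R)=1$ again. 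Therefore Proposition~\ref{scarti1} applies to every ideal $I\subseteq R$ generated in degree $1$ with $\dim R/I\le 1$: for such an $I$ and any $R$-module $M$ one has $\reg_R(IM)\le\reg_R(M)+1$, and $\reg_R(I)=1$.

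Iterating Proposition~\ref{scarti1} accounts for every factor $I_j$ with $\dim R/I_j\le 1$ (each contributes at most $1$ to the regularity), and each free principal factor $(\ell_j)\cong R(-1)$ can be peeled off at the cost of a shift by $1$. Since $\reg_R(I_1\cdots I_d)\ge d$ always holds, it therefore suffices to prove the theorem under the additional assumption that every $I_j$ is \emph{bad}, i.e.\ is neither a free principal ideal nor has quotient of dimension $\le 1$. In $(1)$ and $(3)$, and in $(2)$ with $q$ irreducible, there are no bad ideals at all --- in the first case because $\dim R\le 1$, in the other two because $R$ is a two--dimensional domain, so nonzero ideals of linear forms have positive height and principal ones are free --- so the theorem holds in these cases.

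In $(2)$ with $q$ reducible or a square, a linear change of coordinates brings $R$ to $K[x,y,z]/(xy)$ or $K[x,y,z]/(x^2)$, and the only bad ideals are $(x)$ (and, in the first ring, $(y)$). As $(x)(y)=0$ and $(x)^2=0$ in the respective rings, a nonzero product $I_1\cdots I_d$ has the form $x^aM$, where $a$ is the number of factors equal to $(x)$ and $M$ is the product of the other factors; since those are non-bad, iterating Proposition~\ref{scarti1} and peeling off free principal factors give $\reg_R(M)=d-a$. From the exact sequence $0\to(M\cap\Ann_R(x^a))(-a)\to M(-a)\xrightarrow{\,x^a\,}x^aM\to 0$ and Lemma~\ref{blabla} we get $\reg_R(x^aM)\le\max\{d,\ \reg_R(M\cap\Ann_R(x^a))+a-1\}$. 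Now $\Ann_R(x^a)$ is generated by a single linear form $\ell$ and $M\cap\Ann_R(x^a)=\ell(M:_R\ell)$; reducing modulo $\ell$, over which $R/(\ell)$ is a polynomial ring in two variables and $\reg_R(R/(\ell))=0$, identifies $M:_R\ell$ up to a shift with the intersection of a product of ideals of linear forms with a linear ideal, whose regularity is bounded by means of Remark~\ref{IMS}(4). This yields $\reg_R(M\cap\Ann_R(x^a))\le d-a+1$ and hence $\reg_R(x^aM)\le d$. (For $K[x,y,z]/(xy)$ one may instead push the product along the normalization $R\hookrightarrow K[x,z]\times K[y,z]$ and apply Remark~\ref{IMS}(4) to the two polynomial factors.)

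Finally, cases $(4)$ and $(5)$. Realize $R$ as a diagonal subalgebra of a polynomial ring $T$ ($T=K[x,y,z]$ with $R=T^{(2)}$ in $(4)$; $T=K[x,y,s,t]$ with its $\ZZ^2$-grading and $R=\bigoplus_iT_{(i,i)}$ in $(5)$). With $V_j=(I_j)_1$, viewed inside the degree--two part of $T$, set $J=(V_1\cdots V_d)T$; then $I_1\cdots I_d$ is the ``diagonal part'' of $J$ and a direct summand of $J$ as an $R$-module, so $\reg_R(I_1\cdots I_d)\le\reg_R(J)$. By the second paragraph we may assume every $I_j$ is bad, which here ($R$ being a three--dimensional domain) means non-principal with $\dim R/I_j\ge 2$; a short analysis of base loci then shows $V_j=f_jW_j$ with $f_j$ a single linear form of $T$ and $W_j$ a subspace of the complementary linear piece, so that $W_jT$ is an ideal of linear forms of $T$. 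Hence $J=(f_1\cdots f_d)\cdot(W_1T)\cdots(W_dT)$ is a product of $d$ ideals of linear forms of $T$ twisted by the nonzerodivisor $f_1\cdots f_d$, and $\reg_T(J)$ is controlled by Remark~\ref{IMS}(4). The main obstacle is the passage from $\reg_T(J)$ to $\reg_R(J)$: this requires the structure of $T$ as an $R$-module, namely that the Veronese --- resp.\ Segre --- modules out of which $T$ is built over $R$ have linear $R$-resolutions (Theorem~\ref{cc1}, Proposition~\ref{Segremod}); one then reads $\reg_R(J)\le d$ off the minimal $T$-resolution of $J$, viewed as a complex of $R$-modules, via Lemma~\ref{blabla}. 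This completes the proof.
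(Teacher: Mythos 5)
Your overall strategy is sound and, in the hard cases, genuinely different from the paper's. The shared first step is the same: all five rings are Cohen--Macaulay with $\reg_S(R)=1$, so Proposition \ref{scarti1} (together with peeling off free principal factors) disposes of every factor $I_j$ with $\dim R/I_j\le 1$; the paper packages this as an induction on $d$ whose base case $d=1$ is supplied by universal Koszulness. Where you diverge is in cases (4) and (5): the paper, having written $I_iA=(f_i)J_i$ in the ambient polynomial ring $A$, multiplies \emph{two} factors together to manufacture a principal factor $(f_1f_2)\subset R_1$ and closes the induction with one fewer factor, resorting to \ref{Segremod} only in the final subcase of (5); you instead push the entire product up to $T$, apply Remark \ref{IMS}(4) there, and descend through the Veronese/Segre functor via \ref{cc1}, \ref{Segremod} and \ref{blabla}(2). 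That is a legitimate and arguably more uniform route, in the spirit of the proofs of \ref{cc1} and \ref{cc2}. One caveat: in case (5) the ideal $J\subset T$ is not a finitely generated $R$-module (already $T$ itself is not), so the inequality ``$\reg_R(I_1\cdots I_d)\le\reg_R(J)$'' should be avoided; instead apply the exact diagonal functor to the minimal $T$-resolution of $J$ and bound the regularity of the summand $I_1\cdots I_d$ directly, exactly as in the proof of \ref{cc1}.

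The one genuine weak point is your treatment of (2) with $q$ reducible. The step bounding $\reg_R(M\cap\Ann_R(x^a))$ is not justified as written: Remark \ref{IMS}(4) controls products of ideals of linear forms, not the intersection of such a product with a linear ideal, and the identification of $M:_R\ell$ with a module over the polynomial ring $R/(\ell)$ is not spelled out. Fortunately the entire paragraph is superfluous given your own reduction: once every factor is bad the product is $(x^d)$ (or zero), and $(x^d)\cong (R/\Ann_R(x^d))(-d)$ with $\Ann_R(x^d)$ generated by a single linear form, so $\reg_R((x^d))=d$ follows from $\reg_R(R/\Ann_R(x^d))=0$, i.e.\ from universal Koszulness of the quadric hypersurface. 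This is precisely how the paper settles all of case (2) in one line, via Remark \ref{propri}: in a universally Koszul algebra a product of elements of degree one has a linear annihilator, which handles arbitrary products of principal linear ideals at once. I recommend replacing your third paragraph by this observation.
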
 

\begin{proof}  The rings in the list are Cohen-Macaulay with $\reg_S(R)=1$. Let $I_1,\dots,I_d$ be ideals generated by linear forms. We prove by induction on $d$ that $\reg_R(I_1\cdots I_d)=d$. The case $d=1$ follows because the rings in the list are universally Koszul. If for one of the $I_i$ we have  $\dim R/I_i\leq 1$ then we may use \ref{scarti1} and conclude by induction. Hence we may assume $\dim R/I_i\geq 2$ for every $i$. For the ring (1) and  (3) (which is a $2$-dimensional domain) we are done.  In the case (2),  the only case left is when the  $I_i$ are  principal.  But then we may conclude by virtue of \ref{propri}.  In cases  (3) and (4) we have that  $\dim R/I_i=2$ for each $i$, that is $\height I_i=1$. If one of the $I_i$ is principal, then we are done by induction (because the $R$ is a domain). Denote by $A$ either  $K[x,y,z]$ in case (3) or $K[x,y,s,t]$ in case (4). Since $R$ is a direct summand of $A$ we have $IA\cap R=I$ for every  ideal $I$ of $R$. It follows that $\height (I_iA)=1$ for every $i$ and hence there exist non-units $f_i\in A$ such that $I_iA\subset (f_i)$. In case (3) we have that each $f_i$ must have degree $1$ in $A$ and $I_iA=(f_1)J_1$ with $J_1$ an ideal generated by linear forms of $A$. Hence $I_1I_2=(f_1f_2)H$ where $H=J_1J_2$ is an ideal generated by linear forms of $R$.  Hence we are done because one of the factor is principal. In case (4) we have that each $f_i$ is either a linear form in $x,y$ or a linear form in $s,t$.  If one of the $f_i$'s  is a linear form in  $x,y$ and  another one  is a linear form in  $s,t$ we can proceed as in the case (3). So we are left with the case that every $f_i$ is a linear form in, say, $x$ and $y$ and $I_i=(f_i)J_i$ with $J_i$ generated by linear forms in $z,t$. Since none of the $I_i$ is principal we have that $J_i=(z,t)$ for every $i$. Hence $I_1\cdots I_d$ is generated by $(\prod_{i=1}^d  f_i)(z,t)^d$ and it  isomorphic  to the $R$-submodule of $A$ generated by its component of  degree  $(0,d)$. That such a module has a linear resolution over $R$ follows from  \ref{Segremod}. \end{proof}

 We state now a very basic question of computational nature. 
 
\begin{question}
\label{q1} 
Let $R$ be a Koszul algebra and $M$ an $R$-module.  How does one compute $\reg_R(M)$? Can one do it algorithmically? \end{question}

Few comments concerning Question \ref{q1}. We assume to be able to compute syzygies over $R$ and so to be able to compute the first steps of the resolution of a $R$-module $M$.  Let $S\to R$ the canonical presentation (\ref{canpre}) of $R$. We know that 
$\reg_R(M)\leq \reg_S(M)$ and $\reg_S(M)$ can be computed algorithmically because $\projdim_S(M)$ is finite. 
A special but already interesting case of \ref{q1} is: 
\begin{question}
\label{q2} 
Let $R$ be a Koszul algebra and $M$ an $R$-module generated in degree $0$, with $M_1\neq 0$ and $M_i=0$ for $i>1$.  Can one decide algorithmically whether $\reg_R (M)=0$ or $\reg_R (M)=1$?  
 \end{question}

Set 
$$r_R(M)=\min\{ i\in\NN : t_i^R(M)-i=\reg_R(M)\}.$$
So $r_R(M)$ is the first homological position where the regularity of $M$ is attained. 
If one knows $r_R(M)$ or a upper bound $r\geq r_R(M)$ for it,  then one can compute $\reg_R(M)$ by computing the first $r$ steps of the resolution of $M$. Note that 
$$r_R(M)\leq \ld_R(M)$$ 
because $\reg_R(N)=t_0^R(N)$ if $N$ is componentwise linear. One has: 

\begin{lemma}
\label{a1} 
Let $R$ be a $K$-algebra with $\reg_S(R)=1$.  Then $r_R(M)\leq 2\dim R$ for every $M$, i.e. the regularity of any $R$-module is attained within  the first $2\dim R$ steps of the resolution. If furthermore $R$ is Cohen-Macaulay,  $r_R(M)\leq \dim R-\depth M$. 
\end{lemma}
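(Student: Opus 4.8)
The plan is to combine the inequality $r_R(M)\le \ld_R(M)$ noted just above with the bounds on the linear defect that \ref{absKos} and \ref{absKos1} supply for algebras with $\reg_S(R)=1$, and then to sharpen the Cohen--Macaulay estimate by a generic hyperplane-section reduction.

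First, since $\reg_S(R)=1$, Theorem \ref{absKos} makes $R$ absolutely Koszul, hence Koszul by \ref{finite ld(k)}; thus \ref{romer} is available and $r_R(M)\le \ld_R(M)$ for every $M$. The unconditional bound $\ld_R(M)\le 2\dim R$ of \ref{absKos1} then gives $r_R(M)\le 2\dim R$, which is the first assertion. In the Cohen--Macaulay case \ref{absKos1} already gives $\ld_R(M)\le \dim R$, hence $r_R(M)\le\dim R$; it remains to improve this to $\dim R-\depth M$, and the way to do it is to reduce to the case $\depth M=0$.

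So assume $R$ is Cohen--Macaulay, set $d=\depth_R M$, and (harmlessly, since none of the relevant invariants change under a purely transcendental field extension) assume $K$ infinite. Because $R$ is Cohen--Macaulay, $\depth_R(R\oplus M)=\min\{\dim R,d\}=d$, so a generic sequence $x_1,\dots,x_d$ of linear forms is a regular sequence on both $R$ and $M$. Put $\ol R=R/(x_1,\dots,x_d)$ and $\ol M=M/(x_1,\dots,x_d)M$. Then $\ol R$ is again Cohen--Macaulay, of dimension $\dim R-d$, and $\depth_{\ol R}\ol M=0$; moreover $\reg_S(\ol R)=1$ still, which one checks one form at a time by applying \ref{blabla}(1) to $0\to \ol R'(-1)\to \ol R'\to \ol R'/x\ol R'\to 0$ (the inequality $\le 1$ is formal, and $\ge 1$ holds because at each stage the quotient is not a polynomial ring, its Krull dimension being strictly below the dimension of its space of linear forms). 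The crucial point is that, because each $x_i$ is a nonzerodivisor both on the ambient ring and on the module being resolved, tensoring the minimal graded free $R$-resolution of $M$ with $\ol R$ yields the minimal graded free $\ol R$-resolution of $\ol M$: acyclicity follows from $\Tor^R_j(M,\ol R)=H_j\bigl(K(x_1,\dots,x_d;M)\bigr)=0$ for $j>0$, and minimality is preserved since the entries of the differentials remain in $\mm_{\ol R}$. Hence $\beta_{ij}^R(M)=\beta_{ij}^{\ol R}(\ol M)$ for all $i,j$, and in particular $r_R(M)=r_{\ol R}(\ol M)$. Combining this with $r\le\ld$ and the Cohen--Macaulay case of \ref{absKos1} applied to $\ol R$ and $\ol M$,
$$r_R(M)=r_{\ol R}(\ol M)\le \ld_{\ol R}(\ol M)\le \dim\ol R=\dim R-d,$$
as desired.

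I expect the one genuinely delicate step to be the invariance $\beta_{ij}^R(M)=\beta_{ij}^{\ol R}(\ol M)$ under cutting down by a sequence that is simultaneously $R$-regular and $M$-regular; everything else is routine, namely the persistence of $\reg_S=1$ along the reduction and the fact that $r_R(M)$ is determined by the bigraded Betti table of $M$, so the conclusion transfers from $\ol M$ back to $M$ at once.
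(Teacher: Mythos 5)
Your argument is correct and follows essentially the same route as the paper: the bound $r_R(M)\le 2\dim R$ is exactly the paper's appeal to $r_R(M)\le\ld_R(M)$ together with \ref{absKos1}, and your one-shot reduction modulo a maximal sequence of linear forms regular on both $R$ and $M$ (preserving the graded Betti table, hence $r$) is just the telescoped form of the paper's ``simple induction on $\depth M$,'' with the depth-zero case handled by the Cohen--Macaulay bound $\ld\le\dim$ of \ref{absKos1}.
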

The first assertion follows from \ref{absKos1}, the second is proved by a simple induction on $\depth M$.

Note that the $i$-th syzygy module of $M$ cannot have a free summand if $i>\dim R$ by \cite[0.1]{E} and so 
$$t_{j+1}^R(M)>t_j^R(M) \mbox{ if } j>\dim R.$$ 

Unfortunately there is no hope to get a  bound for $r_R(M)$ just in terms of invariants of $R$ for general Koszul algebras. The argument of \cite[6.7]{HI} that shows that  if $R$ is a Gorenstein algebra with socle degree $>1$ then $\sup \ld_R(M)=\infty$ shows also  that  $\sup_M r_R(M)=\infty$. For instance, over $R=K[x,y]/(x^2,y^2)$ let $M_n$ be the dual  of the $n$-th syzygy module $\Omega^R_n(K)$ shifted by $n$. One has that $M_n$ is generated be in degree $0$,  $\reg_R (M_n)=1$ and $r_R(M_n)=n$. On the other hand,  the number of generators of $M_n$ is $n$.  So we ask: 

\begin{question}
\label{q3} 
Let $R$ be a  Koszul algebra. Can one bound $r_R(M)$ in terms of invariants of $R$ and ``computable" invariants of $M$ such as its Hilbert 
series or  its Betti numbers over $S$?  \end{question}

The questions above make sense also over special families of Koszul rings. For instance,  there has been a lot of activity to understand resolutions of modules over short rings, i.e.~rings  with $\mm_R^3=0$ or $\mm_R^4=0$,  both in the graded and local case, see \cite{AIS1,AIS2,HS}. It would be very interesting to answer Questions \ref{q1}, \ref{q2} and \ref{q3}  for  short rings.

 \section{Local variants}
 \label{locca}

 This section is concerned with ``Koszul-like"  behaviors of  local rings and their modules.   
  
\medskip
\noindent {\bf Assumption}: From now on, when not explicitly said, $R$ is assumed to be a local or graded ring with  maximal ideal $\mm$ and residue field $K=R/\mm$. Moreover all modules and ideals are  finitely generated, and homogeneous in the graded case. 
 \medskip 

 We define the associated graded ring to $R$ with respect to the $\mm$-adic filtration 
 $$G=\gr_{\mm}(R)=\oplus_{i\ge0} \mm^i/\mm^{i+1} .$$ 
The Hilbert series and the Poincar\'e series of $R$ are:

$$ H_R(z)=H_G(z)=\sum_{i\ge 0} \dim (\mm^i/\mm^{i+1})  z^i\ \ \   \text{ and}\ \ \ P_R(z)=\sum_{i\ge 0} \dim \Tor_i^R(K,K) z^i.$$
 
\subsection{Koszul rings}

Following  Fr\"oberg \cite{Fr} we extend the definition of Koszul ring to the local case as follows:
 
\begin{definition}
\label{Kl} The ring $R$ is Koszul if its associated graded ring $G$ is a Koszul algebra (in the graded sense), that is, $R$ is Koszul if $K$ has a linear resolution as a $G$-module. 
\end{definition}

As it is said in Remark \ref{R16} (6) in the graded setting  the Koszul property holds  equivalent to the following  relation between the Poincar\'e series of $K$ and the Hilbert series of $R$:     
\begin{equation} 
\label{PH}  
P^R_K(z) H_R(-z)=1.
\end{equation}

The following definition is due to Fitzgerald \cite{Fit}: 
\begin{definition}
The ring $R$ is Fr\"oberg if  the relation (\ref{PH}) is verified.
\end{definition}
 
We want to explain why every Koszul ring is Fr\"oberg. To this end we need to introduce few definitions.

Let  $A$ be a regular local ring  with maximal ideal $\mm_A$ and let $I$ be an ideal of $A$   such that $I\subseteq \mm^2_A$. Set $R=A/I$.  Then $G \simeq S/I^* $ where $S$ is the polynomial ring and $I^*$ is the homogeneous ideal generated by the initial forms $f^*$ of the elements $f \in I$. 

\begin{definition}
\label{isomultiple}
\begin{itemize}
\item[(1)] A subset $\{f_1, \dots, f_t\}$ of $I$ is a standard basis of $I$ if $I^*=(f_1^*, \dots, f_t^*); $

\item[(2)] The ideal $I$ is $d$-isomultiple if  $I^*$ is generated in degree $d.$ 
\end{itemize}
\end{definition}

If $\{f_1, \dots, f_t\}$ is a standard basis of $I$ then $I=(f_1, \dots, f_t).$ 
See \cite{RV1} for more details on $d$-isomultiple ideals. 
Notice that  by Remark \ref{R16} (1) we have 
$$R\ \ {\text{  Koszul}} \ \ \implies \ \  I \ {\text{is}}  \    2{\text{-isomultiple}}.$$  Obviously  the converse does not hold true because a quadratic $K$-algebra is not necessarily Koszul. 
  
We now explore the connection between Koszul and Fr\"oberg rings.
By definition $H_R(z)=H_G(z),  $ and    
$$P^R_K(z) \le P^G_K(z)   $$
(see for example  \cite[4]{Fr1}).   Conditions are known under which $\beta_i^R(K)=\beta_i^G(K),$ for instance this happens  if  
\begin{equation}
\label{frofro}
t_i^G(K)=\max \{ j : \beta_{ij}^G(K)\neq 0\} \le  \min \{ j : \beta_{i+1 j}^G(K)\neq 0\}  \text{ for every }i,
\end{equation} 
see  \cite[4]{Fr1}. This is the case  if $K$ has a linear resolution as a $G$-module. Hence

\begin{proposition}
\label{KF}
 If  $R $ is Koszul, then $R$ is Fr\"oberg.
\end{proposition}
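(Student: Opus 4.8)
The plan is to deduce the statement directly from the general principle, recalled just before the proposition, that the inequality $\beta_i^R(K)\le\beta_i^G(K)$ becomes an equality once the ``no overlap'' condition \eqref{frofro} holds, together with the observation that a linear resolution is the extreme case in which \eqref{frofro} is automatic. First I would note that by hypothesis $R$ is Koszul, which by Definition \ref{Kl} means exactly that $K$ has a linear resolution over $G$, i.e. $\beta_{ij}^G(K)=0$ whenever $j\ne i$. In particular $t_i^G(K)=i$ and $\min\{j:\beta_{i+1\,j}^G(K)\ne 0\}=i+1$ for every $i$ for which $F_{i+1}\ne 0$, so $t_i^G(K)=i\le i+1$ and condition \eqref{frofro} is satisfied trivially.

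Next I would invoke the cited result \cite[4]{Fr1}: under \eqref{frofro} one has $\beta_i^R(K)=\beta_i^G(K)$ for all $i$, hence $P_K^R(z)=P_K^G(z)$. Since $H_R(z)=H_G(z)$ by construction of the associated graded ring, it suffices to show that $G$, being a Koszul graded $K$-algebra, satisfies the Fröberg relation $P_K^G(z)H_G(-z)=1$. But this is precisely the content of Remark \ref{R16}(6), equation \eqref{HilPoi}: for a graded Koszul algebra the identity $P_K^G(z)H_G(-z)=1$ holds (indeed is equivalent to Koszulness). Combining the two displayed equalities gives $P_K^R(z)H_R(-z)=P_K^G(z)H_G(-z)=1$, which is exactly the relation \eqref{PH} defining the Fröberg property. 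Therefore $R$ is Fröberg.

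There is no real obstacle here; the proposition is essentially a bookkeeping assembly of three facts already in hand: (i) the definition of the local Koszul property in terms of $G$, (ii) the graded Fröberg identity for Koszul graded algebras from Remark \ref{R16}(6), and (iii) the comparison $P_K^R\le P_K^G$ with equality under \eqref{frofro} from \cite{Fr1}. The only point meriting a word of care is verifying that a linear $G$-resolution of $K$ forces \eqref{frofro} — but as noted this is immediate since all generators of $F_i$ sit in degree $i$ and all generators of $F_{i+1}$ in degree $i+1$, so the relevant maximum never exceeds the relevant minimum. I would write the argument in three short sentences mirroring these three steps.
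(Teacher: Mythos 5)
Your argument is correct and is essentially identical to the paper's own proof: both deduce \eqref{frofro} from the linearity of the $G$-resolution of $K$, invoke \cite[4]{Fr1} to get $P^R_K(z)=P^G_K(z)$, and combine this with $H_R(z)=H_G(z)$ and the graded Fr\"oberg identity \eqref{HilPoi} for the Koszul algebra $G$. Your write-up simply spells out the verification of \eqref{frofro} in slightly more detail than the paper does.
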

\begin{proof}  By definition, $H_R(z)=H_G(z)$. If $R$ is a Koszul ring, then $G$ is Koszul, in particular $P^G_K(z)H_G(-z)=1.$ The result follows because the graded  resolution of $K$ as a $G$-module is linear and hence (\ref{frofro}) and therefore $P^R_K(z)=P^G_K(z)$.   
\end{proof} 

Since in the graded case $R$ is Fr\"oberg iff it  is  Koszul,  it is natural to ask the following question.

\begin{question} 
\label{FK}
Is a Fr\"oberg (local) ring Koszul? 
\end{question}

We  give a positive answer to this question for a special class of rings. If $f $ is a non-zero element of R, denote by $v(f)=v $ the valuation of $f, $ that is the largest integer such that $f \in \mm^v. $

\begin{proposition} Let $I$ be an ideal generated by a regular sequence in a regular local ring $A$. The following facts are equivalent: 
 \begin{itemize}
 \item[(1)]  $A/I$ is Koszul.
 \item[(2)]   $A/I$ is Fr\"oberg.
 \item[(3)]  $I$ is $2$-isomultiple.
 \end{itemize} 
 \end{proposition}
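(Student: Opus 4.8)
The plan is to prove the cyclic chain of implications $(1)\Rightarrow(2)\Rightarrow(3)\Rightarrow(1)$. The implication $(1)\Rightarrow(2)$ is free: it is exactly Proposition \ref{KF}, which asserts that any Koszul ring is Fr\"oberg, and no regular sequence hypothesis is needed there. So the real content lies in $(2)\Rightarrow(3)$ and $(3)\Rightarrow(1)$.

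For $(3)\Rightarrow(1)$ I would use the complete intersection structure. Write $A$ regular local with $I=(f_1,\dots,f_c)$ a regular sequence inside $\mm_A^2$, and $G=\gr_\mm(R)\cong S/I^*$ with $S$ the polynomial ring $\gr_{\mm_A}(A)$. If $I$ is $2$-isomultiple then $I^*$ is generated in degree $2$; moreover, since $f_1,\dots,f_c$ is a regular sequence, a standard computation of Hilbert series (compare $H_R(z)=\prod(1-z^{d_i})/(1-z)^{\dim A}$ where $d_i=v(f_i)$ against $H_{S/(f_1^*,\dots,f_c^*)}(z)$) forces the initial forms $f_1^*,\dots,f_c^*$ themselves to be a regular sequence in $S$, and each $f_i$ must have valuation exactly $2$. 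Hence $G=S/(f_1^*,\dots,f_c^*)$ is a complete intersection of quadrics, and by Remark \ref{R16}(3) — which is Tate's theorem that a complete intersection of quadrics is Koszul — $G$ is a Koszul algebra, i.e. $R$ is Koszul by Definition \ref{Kl}.

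For $(2)\Rightarrow(3)$ I would argue by contradiction via Hilbert series bookkeeping. Suppose $R=A/I$ is Fr\"oberg, so $P^R_K(z)H_R(-z)=1$. Since $I$ is generated by a regular sequence $f_1,\dots,f_c$ with valuations $v(f_i)=d_i$, the Tate resolution gives $P^R_K(z)=(1+z)^{\dim A}/\prod_{i=1}^c(1-z^2)$ (the $c$ polynomial variables in homological degree $2$ kill the Koszul homology $H_1$), independently of the $d_i$. On the other hand $H_R(z)=H_G(z)$, and since $f_1^*,\dots,f_c^*$ need not form a regular sequence in general we only know $H_R(z)$ coefficientwise dominates $\prod(1-z^{d_i})/(1-z)^{\dim A}$, with equality iff the initial forms form a regular sequence iff (given they generate $I^*$) the ideal is generated in the degrees $d_i$. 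Plugging into the Fr\"oberg relation forces $H_R(-z)=\prod(1-z^2)/(1+z)^{\dim A}$, which is precisely the Hilbert series of a complete intersection of $c$ quadrics; comparing with $\prod(1-z^{d_i})/(1-z)^{\dim A}$ and using $d_i\geq 2$ forces every $d_i=2$ and forces $f_1^*,\dots,f_c^*$ to be a regular sequence generating $I^*$. Thus $I^*$ is generated in degree $2$, i.e. $I$ is $2$-isomultiple.

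The main obstacle will be the bookkeeping in $(2)\Rightarrow(3)$: one must be careful that $\{f_1,\dots,f_c\}$ being a regular sequence does not immediately make $\{f_1^*,\dots,f_c^*\}$ one, so the argument genuinely has to run through the Hilbert–Poincar\'e identity rather than through a direct structural claim about $I^*$. The clean way to package this is to note that for a quotient of a regular local ring one always has $P^R_K(z)H_R(-z)\succeq 1$ coefficientwise (this is the inequality $P^R_K\le P^G_K$ together with $G$ being a graded quotient of $S$), that $R$ is Fr\"oberg exactly when equality holds, and that for a complete intersection equality of the whole product is equivalent to the numerator factoring as $\prod(1-z^2)$, which pins down the valuations and the isomultiplicity. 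Once that is in place, $(3)\Rightarrow(1)$ is immediate from Tate's theorem as recalled in Remark \ref{R16}(3), and $(1)\Rightarrow(2)$ is Proposition \ref{KF}, closing the cycle.
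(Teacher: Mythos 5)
Your proof is correct and follows essentially the same route as the paper's: $(1)\Rightarrow(2)$ is Proposition \ref{KF}, $(2)\Rightarrow(3)$ combines Tate's formula $P^{R}_K(z)=(1+z)^n/(1-z^2)^c$ with the Fr\"oberg relation and then pins down the valuations and the regular-sequence property of the initial forms by a multiplicity/Hilbert-series comparison (the paper delegates exactly this last step to \cite[1.8]{RV1}), and $(3)\Rightarrow(1)$ reduces to the Koszulness of a graded quadratic complete intersection. One caution: the identity $H_R(z)=\prod_i(1-z^{d_i})/(1-z)^{\dim A}$ that you write in the $(3)\Rightarrow(1)$ step is precisely the equality that fails in general (it holds iff $f_1^*,\dots,f_c^*$ form a regular sequence), and the coefficientwise domination you invoke in $(2)\Rightarrow(3)$ is true but is the substance of the cited result from \cite{RV1} rather than a formal consequence of the two opposite-direction Koszul-complex inequalities, so either cite it or supply the classical intersection-multiplicity bound $e(A/I)\geq\prod_i d_i$.
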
 
 \begin{proof} By  \ref{KF} we know (1) implies (2). We prove that (2) implies (3). Let $I=(f_1,\dots, f_r) $ with $v(f_i)=v_i \ge 2. $  By  \cite{T} we have  $P^{A/I}_K(z)=(1+z)^n/  (1-z^{2})^r$.   Since $A/I$ is Fr\"oberg, one has that $H_{A/I} (z)=(1-z^{2})^r/(1-z)^n,$ in particular the multiplicity of $A/I$ is $2^r$.  From \cite[1.8]{RV1}, it follows that $v_i=2 $ for every $i=1,\dots, r$ and $f_1^*, \dots, f_r^* $ is a regular sequence in $G$.  Hence $I^*=(f_1^*, \dots, f_r^*),   $  so $I$ is $2$-isomultiple. If we assume (3), then $G$ is a graded quadratic complete intersection, hence  $P^G_K(z) H_G(-z)=1$ and since $G$ is graded this implies that $G$ is  Koszul.  
 \end{proof} 
  
Next example is  interesting to better understand what happens in case the regular sequence is not  $2$-isomultiple. 
  
\begin{example} Consider $I_s=(x^2-y^s, xy) \subset A=K[[x,y]] $ where $s$ is an integer $\ge 2.$  Then,  as we have seen before,    $P^{A/I}_K(z)=(1+z)^2/(1-z^2)^2 $ and it does not depend on $s.$ On the contrary  the Hilbert series depends on $s, $ precisely $H_{A/I}(z)=1 +2z+ \sum_{i=2}^s z^i.$ It follows that  $A/I$ is Koszul  (hence Fr\"oberg)  if and only if $s=2 $ if and only if $I$ is $2$-isomultiple. In fact if $s>2, $ then $I_s^*=(x^2,xy, y^{s+1}) $ is not quadratic. 
\end{example}

In the following we denote by $e(M)$ the multiplicity (or degree) of  an $R$-module $M$ and by $\mu(M)$ its minimal number of generators. Let $R$ be a Cohen-Macaulay ring.  Abhyankar proved that  $e(R)\ge h+1 $  and $h=\mu(\mm)-\dim R$ is  the so-called embedding codimension. If equality holds  $R$ is said to be of minimal multiplicity.

\begin{proposition} 
\label{h+12}
Let $R$ be a  Cohen-Macaulay  ring of multiplicity $e$ and Cohen-Macaulay type $\tau$.  If one of the following conditions holds: 
 \begin{itemize}
 \item[(1)]  $e=h+1$,
 \item[(2)] $e=h +2 $ and $\tau <h,$
 \end{itemize} 
then $R$ is a Koszul ring.

\end{proposition}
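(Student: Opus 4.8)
The plan is to reduce to the case where $R$ is a standard graded Artinian $K$-algebra, and then dispose of the two hypotheses separately.

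\emph{Reduction to the Artinian case.} Since $R$ is Koszul exactly when $G=\gr_{\mm}(R)$ is Koszul, and since Koszulness is insensitive to the faithfully flat extension $K\subseteq\overline K$, I may assume $K$ is infinite and replace $R$ by $G$. Under either hypothesis $\gr_{\mm}(R)$ is Cohen--Macaulay --- this is classical for $e=h+1$ and is due to Sally, Rossi--Valla for $e=h+2$, see \cite{RV1} --- with $h$-vector $(1,h)$ in case (1) and $(1,h,1)$ in case (2), and the Cohen--Macaulay type is preserved, $\tau(G)=\tau(R)$. Choosing in $G$ a regular sequence of linear forms $\ell_1,\dots,\ell_d$ with $d=\dim R$ and setting $A=G/(\ell_1,\dots,\ell_d)$, the algebra $A$ is Artinian with Hilbert function equal to the $h$-vector of $G$ and with $\operatorname{socle}$ of dimension $\tau$. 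Finally, dividing a graded algebra by a linear nonzerodivisor multiplies the Hilbert series by $1-z$ and divides $P^{(-)}_K(z)$ by $1+z$, so by the Fr\"oberg criterion (see \ref{R16}(6)) Koszulness is preserved in both directions; hence it suffices to treat the Artinian $A$, whose Hilbert function is $(1,h)$ in case (1) and $(1,h,1)$ with $\dim\operatorname{socle}(A)=\tau<h$ in case (2).

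\emph{Case (1).} Here $\mm_A^{2}=0$, so in the canonical presentation $A=S/I$ the ideal is $I=\mm_S^{2}$, generated by quadratic monomials, and $A$ is Koszul by \ref{monKos} (alternatively, the minimal free resolution of $K$ over $A$ is visibly linear).

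\emph{Case (2).} Consider the symmetric multiplication pairing $b\colon A_1\times A_1\to A_2\cong K$. Since $A_2\subseteq\operatorname{socle}(A)$ and $\operatorname{socle}(A)\cap A_1=\{x\in A_1:\ b(x,-)=0\}$, one gets $\operatorname{socle}(A)=\operatorname{rad}(b)\oplus A_2$, so $\tau=h-\rank(b)+1$ and the hypothesis $\tau<h$ means exactly $\rank(b)\ge2$. First I would show that $A$ is quadratic: putting $b$ into normal form --- diagonal if $\chara K\neq2$, with the standard modification if $\chara K=2$ --- one obtains coordinates $x_1,\dots,x_h$ with $x_ix_j=0$ for $i\neq j$ and each $x_i^{2}$ equal to $0$ or to a fixed generator $s$ of $A_2$; a direct computation then gives $S_1\cdot I_2=S_3$ as soon as $\rank(b)\ge2$, whereas for $\rank(b)=1$ (that is, $\tau=h$) one has $x_1^{3}\notin S_1I_2$ and $A$ is not even quadratic. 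Next I would prove that $A$ is \emph{universally Koszul}. Using $\mm_A^{3}=0$ and $\dim A_2=1$ one checks that an ideal of $A$ generated by linear forms is $0$, or $\mm_A$, or of the form $W$ (when $W\subseteq\operatorname{rad}(b)$) or $W\oplus A_2$ for a subspace $W\subseteq A_1$; and that for $x\in A_1\setminus W$ the colon $(W):x$ equals $\mm_A$ or $(x^{\perp})$, where $\rank(b)\ge2$ forces $x^{\perp}\not\subseteq\operatorname{rad}(b)$, hence $A_2\subseteq(x^{\perp})$ and $(x^{\perp})$ is generated by the linear forms lying in $x^{\perp}$. Thus $\LL(A)$ is a Koszul filtration of $A$, and $A$ is Koszul by \ref{filtflag}.

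The main obstacle is the reduction step: it relies on the non-elementary fact that $\gr_{\mm}(R)$ is Cohen--Macaulay for rings of almost minimal multiplicity and that the Cohen--Macaulay type is unchanged under passage to $\gr_{\mm}$ and modding out a linear regular sequence, together with the slightly fiddly normal-form bookkeeping for $b$ in characteristic $2$. The conceptual heart is the observation that, for an Artinian algebra of Hilbert function $(1,h,1)$, the three conditions ``$\tau<h$'', ``quadratic'' and ``(universally) Koszul'' coincide, which is precisely the $\rank(b)\ge2$ computation above.
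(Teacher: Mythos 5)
Your reduction to the Artinian case is essentially the one the paper uses: both arguments take the Cohen--Macaulayness of $G=\gr_{\mm}(R)$ from \cite{RV1}, pass to a (filter) regular sequence of linear forms over an infinite residue field, and land on an Artinian algebra with Hilbert function $(1,h)$ or $(1,h,1)$. The divergence is in what happens next. The paper also imports \emph{quadraticity} of $G$ from \cite[3.3, 3.10]{RV1} and then simply cites \cite{Fit} or \cite{C3} for the fact that an Artinian quadratic algebra with $\mu(\mm)=h>1$ and $\dim_K\mm^2\le 1$ is Koszul. You instead prove the Artinian statement from scratch: case (1) via \ref{monKos}, and case (2) by showing that $\LL(A)$ itself is a Koszul filtration, reading everything off the rank of the multiplication pairing $b$ on $A_1$. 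This buys a stronger and more transparent conclusion --- the algebra is universally Koszul, and the equivalence of ``$\tau<h$'', ``quadratic'' and ``Koszul'' for $(1,h,1)$ algebras becomes explicit --- at the cost of the normal-form bookkeeping (including characteristic $2$, where the form may be alternating rather than diagonalizable; the same computation still gives $S_1I_2=S_3$ once $\rank(b)\ge 2$).

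The one assertion you lean on without proof --- that the Artinian reduction $A$ of $G$ has socle dimension $\tau(R)$ --- is genuinely needed in case (2), since it is what delivers $\rank(b)\ge 2$, but it is much less delicate here than your closing remark suggests. After killing a maximal regular sequence you may assume $R$ is Artinian with $\mm^3=0$ and $\dim_K\mm^2=1$; then $\mm^2\subseteq\operatorname{socle}(R)$, the radical of $b$ on $\mm/\mm^2$ is exactly $\operatorname{socle}(R)/\mm^2$, and hence $\operatorname{socle}(G)=\operatorname{rad}(b)\oplus G_2$ has dimension $(\tau-1)+1=\tau$. With that two-line verification inserted, your argument is complete.
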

\begin{proof} In both cases the associated graded ring is Cohen-Macaulay and quadratic (see  \cite[3.3, 3.10]{RV1}). We may assume that the residue field is infinite, hence there exist $x_1^*, \dots, x_d^*$ filter regular sequence in $G$ and it is enough to  prove that $G/(x_1^*, \dots, x_d^*) \simeq \gr_{\mm/(x_1, \dots, x_d)}  (R/(x_1, \dots, x_d))  $ is Koszul (see for example \cite[2.13]{IR}). Hence the problem is reduced to an Artinian quadratic $K$-algebra  with $\mu(\mm)=h  >1 $ and $\dim_K\mm^2 \le 1  $  and the result follows, see   \cite{Fit}  or \cite{C3}. 
\end{proof} 
 
\begin{remark} (1) There are Cohen-Macaulay rings $R$ with  $e=h +2   $ and $\tau=h $ whose associated graded ring $G$ is not quadratic, hence not Koszul.  For example this is the case if  $R=k[[t^5, t^6, t^{13}, t^{14}]], $ where  $e=h+2=3+2=5 $ and $\tau=3.  $  

(2) Let  $R$ be Artinian of multiplicity $e=h +3$.  Then $R$ is stretched  if its Hilbert function is $1+hz+z^2+z^3,$  and short if its Hilbert function is $1+hz+h^2$  (for details see  \cite{RV2}). For example if $R$ is Gorenstein, then $R$ is stretched. Sally classified,  up to analytic isomorphism,  the Artinian local rings which are stretched in terms of the multiplicity  and the Cohen-Macaulay type. As a consequence one verifies  that if  $R$  is stretched of multiplicity $\ge h+3, $  then $I^*$ is never quadratic, hence $R$ is never Koszul.  If $R$ is short,    then $R$ is Koszul if and only if $G$ is quadratic.  In fact,  by a result in  Backelin's PhD thesis  (see also \cite{C3}),  if  $\dim_K G_2=2 $ and $G$ is quadratic, then $G$ is Koszul, so $R$ is Koszul.  
\end{remark} 
  
\subsection{Koszul modules and linear defect }\label{modules}
  
Koszul modules have been introduced in \cite{HI}. Let us recall the definition.   
   
Consider $(\FF^R_M, \delta) $ a minimal free resolution of $M$ as an $R$-module. The property $ \delta (\FF^R_M) \subseteq \mm \FF^R_M $ (the minimality) allows us to form for every $j \ge 0$ a complex  
 $$   { \lin_j}(\FF^R_M) :   \ \    0 \to \frac {F_j }{\mm F_j} \to \dots \to \frac{ \mm^{j-i} F_i}{ \mm^{j-i+1} F_i} \to \dots \to \frac{\mm^jF_0}{\mm^{j+1} F_0}  \to 0$$ 
of $K$-vector spaces.  
Denoting $\lin(\FF^R_M)=\oplus_{j\ge 0} \lin_j (\FF^R_M)$, one has that $\lin (\FF^R_M) $ is a complex of free graded modules over $G=\gr_{\mm}(R) $ whose  $i$-th free module  is $$\oplus_j  \mm^{j-i} F_i/\mm^{j-i+1} F_i=\gr_{\mm}(F_i)(-i)=G(-i) \otimes_K F_i/\mm F_i. $$ By  construction the differentials can be described by matrices of linear forms. 
\vskip 2mm
 Accordingly with  the definition given  by Herzog and Iyengar in \cite{HI}: 
\begin{definition} \label{Kos} $M$ is a Koszul module if $H_i(  \lin_j (\FF^R_M))=0 $   for every $i > 0 $ and $j\ge 0$, that is,  $H_i(  \lin(\FF^R_M))=0 $ for every $i > 0$.
\end{definition}

\begin{remark}
Notice  that, if $R$ is  graded, the $K$-algebras $G$ and $ R$ are naturally isomorphic. In particular  $\lin(\FF^R_M)$ coincides  with the complex  already defined in \ref{AK}. This is why $\lin(\FF^R_M)$ is called the  linear part of $\FF^R_M$. 
\end{remark}

  As in the graded case, see (\ref{ldgr}),  one defines the linear defect of $M$ over $R$: 
 \begin{equation}   \ld_R(M)=\sup \{ i: H_i(\lin (\FF^R_M))\neq 0\}.\end{equation} 
The linear defect  gives a measure of how far is $ \lin(\FF^R_M)$ from being a resolution of  $\gr_{\mm}(M)=  \oplus_{j\ge 0} \mm^j M/\mm^{j+1} M.$ By the uniqueness of minimal free resolution, up to isomorphism of complexes, one has that $\ld_R(M) $ does not depend on $\FF^R_ M, $ but only on the module $M.$
When  $\ld_R(M)< \infty  $ we say that  in the minimal free resolution the linear part predominates.

Koszul modules have appeared previously in the literature under the name ``modules with linear resolution" in \cite{Se} and 
 ``weakly Koszul"  in \cite{MVZ}.

By definition  $R$ is Koszul as an $R$-module because it is $R$-free.  But, accordingly with  Definition \ref{Kl},  $R$ is a Koszul ring if and only if $K$ is a Koszul  $R$-module. We have:

\medskip
\centerline{$R $ is a Koszul  ring $\iff $ $K$  is a Koszul $R$-module   $\iff$  $K$ is a Koszul $G$-module.} 
\medskip

If $R$ is a graded $K$-algebra,   Corollary \ref{finite ld(k)}  in particular says  that  $R$ is  a  Koszul ring  if $ K$ has  finite linear defect  or equivalently  $K$ is  a  Koszul module.  By \cite[1.13]{HI} and  \cite[3.4]{IR}  one gets the following result, that  is the analogous of  Theorem \ref{AEP}, with the regularity replaced by the linear defect.  

 \begin{proposition} \label{infty}
Let $R$ be a graded $K$-algebra. The following facts are equivalent: 
 \begin{itemize}
 \item[(1)]  $R$ is Koszul,
 \item[(2)]  $\ld_R(K)=0$,
 \item[(3)]  $\ld_R(K)< \infty$,
 \item[(4)]  there exists a Koszul Cohen-Macaulay $R$-module $M$ with $\mu(M)=e(M)$,
 \item[(5)]  every  Cohen-Macaulay $R$-module $M$ with $\mu(M)=e(M) $ is Koszul.
 \end{itemize} 
 \end{proposition}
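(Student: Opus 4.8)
The plan is to prove the cycle of implications $(1)\Rightarrow(2)\Rightarrow(3)\Rightarrow(1)$ for the statements about $K$, and then close the loop to the Cohen-Macaulay module statements via $(1)\Rightarrow(5)\Rightarrow(4)\Rightarrow(1)$. Since $R$ is graded, $G=\gr_{\mm}(R)$ is naturally isomorphic to $R$ and $\lin(\FF^R_M)$ is literally the linear-part complex of Section \ref{AK}, so "$K$ is a Koszul $R$-module", "$K$ is a Koszul $G$-module" and "$\ld_R(K)=0$" all say the same thing, and the content of $(1)\Leftrightarrow(2)$ is just the unwinding of Definition \ref{Kl} together with the observation that $\ld_R(K)=0$ means precisely that $\lin(\FF^R_K)$ is a resolution of $\gr_{\mm}(K)=K$, i.e. that the minimal free resolution of $K$ over $G$ is linear. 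The implication $(2)\Rightarrow(3)$ is trivial. For $(3)\Rightarrow(1)$ I would invoke Corollary \ref{finite ld(k)} directly: it states that $R$ is Koszul if and only if $\ld_R(K)$ is finite. (Alternatively one cites \cite[1.13]{HI}; the point is that finiteness of the linear defect of the residue field already forces it to vanish, in analogy with the Avramov--Peeva half of Theorem \ref{AEP}.)

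For the equivalence with the module conditions (4) and (5), I would argue $(1)\Rightarrow(5)$ first. Assume $R$ Koszul and let $M$ be Cohen-Macaulay with $\mu(M)=e(M)$. After passing to an infinite residue field and quotienting by a maximal $M$-regular sequence of linear forms — which, using \cite[2.13]{IR} as in the proof of \ref{h+12}, preserves both Koszulness of the ring and the Koszul/non-Koszul status of the module — one reduces to the Artinian case, where $\mu(M)=e(M)$ forces $\mm_R M=0$, i.e. $M$ is a direct sum of copies of $K$. A finite direct sum of shifts of the Koszul module $K$ is again Koszul, which gives (5). The implication $(5)\Rightarrow(4)$ needs only the existence of one such module: for instance $M=K$ itself, or, to have a genuinely Cohen-Macaulay example of positive dimension, a high truncation or a sufficiently general module of minimal multiplicity; in any case $(4)$ asserts mere existence, so it follows. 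Finally $(4)\Rightarrow(1)$: given a Koszul Cohen-Macaulay $M$ with $\mu(M)=e(M)$, reduce again modulo a maximal regular sequence of linear forms (which does not change $\ld$ by \cite[2.13]{IR}) to the Artinian case, where $\mm M=0$ and hence $M\cong K^{\oplus\mu(M)}$ up to shift; Koszulness of $M$ then means Koszulness of $K$ as a $G$-module, which is $(1)$ by Definition \ref{Kl}.

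The one place where care is needed — and which I expect to be the main obstacle — is the behaviour of the linear defect under reduction by a linear non-zerodivisor (more precisely, by a filter-regular sequence of linear forms, since in the local case one cannot literally choose regular elements of degree $1$). The claim is $\ld_R(M)=\ld_{R/(x)}(M/xM)$ when $x$ is $M$-regular and suitably generic, and this is exactly what \cite[2.13]{IR} (invoked already in the proof of \ref{h+12}) provides; I would cite it rather than reprove it, but I would make explicit that this is the step carrying the inductive reduction to the Artinian situation in both $(1)\Rightarrow(5)$ and $(4)\Rightarrow(1)$. Everything else is formal: the translation between "linear free resolution over $G$", "$\ld=0$", and "Koszul module" is Proposition \ref{romer}(1) (or its definitional unwinding), and the equivalence $(1)\Leftrightarrow(3)$ for $K$ is Corollary \ref{finite ld(k)} verbatim.
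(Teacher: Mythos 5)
The paper does not actually prove this proposition: it is stated with the one-line attribution ``By \cite[1.13]{HI} and \cite[3.4]{IR} one gets the following result'', so your argument is necessarily a reconstruction rather than a match against a written proof. As a reconstruction it is sound, and it follows the same lines as those references: your cycle $(1)\Rightarrow(2)\Rightarrow(3)\Rightarrow(1)$ is exactly Corollary \ref{finite ld(k)} (finiteness of $\ld_R(K)$ forces finiteness of $\reg_R(K)$, and then Avramov--Peeva forces it to vanish), while the equivalence with (4) and (5) is the content of \cite[3.4]{IR}, proved there by the reduction you describe: pass to an infinite residue field, kill a generic linear system of parameters for $M$, and observe that in the Artinian situation $\mu(M)=e(M)$ means $\mm M=0$, so $M$ is a direct sum of shifts of $K$. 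Two points should be pinned down in your write-up, though neither is a gap. First, \cite[2.13]{IR}, as the paper uses it in the proof of \ref{h+12}, concerns the ring modulo a filter-regular sequence; the module-level claim $\ld_R(M)=\ld_{R/(x)}(M/xM)$ requires the corresponding statement for modules, valid when the linear forms are (filter-)regular on both $R$ and $M$ --- so you should say the forms are chosen generically to be simultaneously a maximal $M$-regular sequence and an $R$-filter-regular sequence. This is also what lets you transfer Koszulness of $R/(x)$ back up to $R$ in $(4)\Rightarrow(1)$, where $\dim M$ may be strictly smaller than $\dim R$. Second, $(5)\Rightarrow(4)$ needs the existence of at least one Cohen--Macaulay module of minimal degree, which you supply with $M=K$ (as the paper also remarks right after the statement); without that observation (5) could hold vacuously.
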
 
 
In \cite{IR} the modules  verifying  $\mu(M)=e(M)$ are named  modules of minimal degree.  When $R$ itself is Cohen-Macaulay, the maximal Cohen-Macaulay modules of minimal degree are precisely the so-called Ulrich modules. Cohen-Macaulay modules of minimal degree exist over any local ring, for example the residue field is one.  
 
The following question appears in \cite[1.14]{HI}. 

\begin{question}
\label{loc1}  Let $R$ be a local ring. If $\ld_R(K)<\infty$,  then is $\ld_R(K)=0$?
\end{question}

 To answer \ref{loc1} one has to  compare $\lin(\FF^R_K) $ and $\lin(\FF^G_K)$. From a minimal free resolution of $K$ as a $G$-module we can build up a free resolution (not necessarily minimal) of $K$ as an $R$-module. In some cases the  process for getting the minimal free resolution is under control via special cancellations (see  \cite[3.1]{RS}), but in general it is a difficult task. 

We may define absolutely  Koszul local rings exactly as in the graded case. A positive answer to  Question \ref{loc1} would give a positive answer to the following:

\begin{question}
\label{loc2} Let $R$ be  an absolutely Koszul local ring. Is $R$   Koszul? 
\end{question}

\end{document}